\newtheorem{theorem}{Theorem}[section]
\newtheorem{lemma}[theorem]{Lemma}
\newtheorem{proposition}[theorem]{Proposition}
\newtheorem{corollary}[theorem]{Corollary}
\theoremstyle{definition}
\newtheorem{definition}[theorem]{Definition}
\newtheorem{remark}{Remark}
\newtheorem{example}{Example}
 \DeclareMathOperator{\Cov}{Cov}
\DeclareMathOperator{\diam}{diam} 
\DeclareMathOperator{\vo}{Vol}\DeclareMathOperator{\id}{id}
\DeclareMathOperator{\Exp}{Exp}
\DeclareMathOperator{\LGC}{LGC}
\DeclareMathOperator{\A}{A}
\author{Lixia Yuan}
\address{
 Department of Mathematics\\
  Fudan University\\
  Shanghai, China}
\email{yuan\underline{ }lixia@foxmail.com}
\author{Wei Zhao}
\address{
Department of Mathematics\\
East China University of Science and Technology\\
Shanghai, China}
\email{szhao\underline{ }wei@yahoo.com}
\keywords{Finsler manifold, Santal\'o's formula, Croke's isoperimetric inequality, the first eigenvalue, finiteness theorem} \subjclass[2010]{Primary 53B40,
Secondary 47J10, 28A75}
\begin{document}

\title[]{Some formulas of Santal\'o type in Finsler geometry and its applications}

\begin{abstract}
In this paper, we establish two Santal\'o type formulas for general Finsler manifolds. As applications, we derive a universal lower bound for the first eigenvalue of the nonlinear Laplacian, two Croke type isoperimetric inequalities, and a Yamaguch type finiteness theorem in Finser geometry.
\end{abstract}
\maketitle

\section{Introduction}
In \cite{Sa,Sa2}, Santal\'o considered the kinematic measure and established a formula which describes the Liouville measure on the unit sphere bundle of a Riemannian manifold in terms of the geodesic flow and the measure of a hypersurface.
This formula plays an important role in global Riemannian geometry.
Some of its applications are universal bounds for the first eigenvalue \cite{C2}, Croke's isoperimetric inequality \cite{Cr} and a generalization of Berger's theorem \cite{Cr3}. Moreover, with Santal\'o's formula, Croke in \cite{Cr2} solved a famous isoperimetric problem in dimension $4$. See \cite{C2,Cr2,Cr3,Cr5,Cr,Cr4,Sa,Sa2} for more details.

 A Finsler manifold is a differentiable manifold, on
which every tangent space is endowed a Minkowski norm instead of a
Euclidean norm.
There is only one reasonable notion of the measure for
Riemannian manifolds (cf. \cite{BBI}). However, the measure on a Finsler manifold can be defined in various ways and essentially different
results may be obtained, e.g., \cite{AlB,AlT,Sh1}. Hence,
it is interesting
to ask whether an analogue of Santal\'o's formula still holds for Finsler manifolds.

Let $(M,F)$ be a Finsler manifold. Denote by $\pi:SM\rightarrow M$ the unit sphere bundle. If $F(y)=F(-y)$ for any $y\in SM$, then $F$ is reversible. In a reversible Finsler manifold, the reverse of a geodesic is still a geodesic (see \cite{BCS,Sh1}).
In \cite{ZY}, Shen and Zhao considered the problem above and established a Santal\'o type formula for reversible Finsler manifolds.

There are infinitely many nonreversible Finsler metrics.
For example, a Randers metric in the form $F=\alpha+\beta$ is non-reversible, where
$\alpha$ is a Riemannian metric and $\beta$ is a $1$-form. The reverse of a geodesic in a non-reversible Finsler manifold is in general not a geodesic.
Moreover, in a non-reversible Finsler manifold, the measure of a hypersurface induced by the inward normal vector field may be different from the one induced by the outward normal vector filed (see Example 1 in Section 5 below). The purpose of this paper is to establish some Santal\'o type formulas for general Finsler manifolds.

Let $(M,\partial M, F,d\mu)$ be a compact Finsler manifold with smooth boundary, where $F$ is possibly non-reversible and $d\mu$ is a measure on $M$.
 Denote by $\mathbf{n}_+$ and $\mathbf{n}_-$ the unit inward and outward normal vector fields along $\partial M$, respectively. The measures on $\partial M$ induced by $\mathbf{n}_\pm$ are defined by $d\A_\pm:=i^*(\mathbf{n}_\pm\rfloor d\mu)$.
Let $S^+\partial M$ and $S^-\partial M$ be the bundles of inwardly and outwardly pointing unit vectors along $\partial M$, i.e., $S^\pm\partial M=\{y\in
SM|_{\partial M}:g_{\mathbf{n}_\pm}(\mathbf{n}_\pm,y)>0\}$. The measures on $S^\pm\partial M$ are the product measures $d\chi_\pm(y):=d\nu_{\pi(y)}(y)d\A_\pm(\pi(y))$, where $d\nu_x(y)$ is the Riemannian measure on $S_xM:=\pi^{-1}(x)$ induced by $F$.
For each $y\in S^+\partial M$,
set ${\mathfrak{t}}(y):=\sup \{t>0:\gamma_y(s)\in M, \,0<s<t\}$ and $l(y):=\min \{i(y),\mathfrak{t}(y)\}$, where $i(y)$ is the cut value of $y$.

Since $F$ may be non-reversible, to investigate the asymmetry of the Finsler manifold, we introduce the reverse of $F$, which is defined by $\widetilde{F}(y):=F(-y)$. Clearly, $\widetilde{F}$ is a Finsler metric as well.  Let $\tilde{\mathfrak{t}}(\cdot)$, $\tilde{i}(\cdot)$ and $\tilde{l}(\cdot)$ be defined as above on $(M,\partial M,\widetilde{F})$.
Then we have the following Santal\'o type formulas.
\begin{theorem}\label{t1}
 For all integral function $f$ on $S M$, we have
\begin{align*}
&\int_{\mathcal {V}^-_M}f dV_{SM}=\int_{y\in S^+\partial M} e^{\tau(y)}g_{\mathbf{n}_+}(\mathbf{n}_+,y)d\chi_+(y)\int^{l(y)}_0f(\varphi_t(y))dt,\tag{i}\label{th1i}\\
&\int_{\mathcal {V}^+_M}f dV_{SM}=\int_{y\in S^-\partial M} e^{\tau(y)}g_{\mathbf{n}_-}(\mathbf{n}_-,y)d\chi_-(y)\int_0^{\tilde{l}(-y)}f(\varphi_{-t}(y))dt,\tag{ii}\label{th1ii}
\end{align*}
where $dV_{SM}$ is the canonical Riemannian measure on $SM$, $\tau$ is the distortion of $d\mu$, $\varphi_t(y)$ is the geodesic flow of $F$, $\mathcal {V}^-_M:=\{y\in S M: \tilde{\mathfrak{t}}(-y)\leq  \tilde{{i}}(-y)\}$ and $\mathcal {V}^+_M:=\{y\in S M: {\mathfrak{t}}(y)\leq  {{i}}(y)\}$.
\end{theorem}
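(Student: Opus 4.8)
\emph{Strategy.} My plan is to prove \eqref{th1i} directly and then deduce \eqref{th1ii} by applying \eqref{th1i} to the reversed metric $\widetilde F$ and transporting the identity back along the correspondence $\varphi^{\widetilde F}_t(y)=-\varphi^F_{-t}(-y)$; under $F\leftrightarrow\widetilde F$ the roles of $S^+\partial M$ and $S^-\partial M$, of $\mathbf n_+$ and $\mathbf n_-$, and of $\mathcal V^-_M$ and $\mathcal V^+_M$ are interchanged, while $dV_{SM}$ is invariant under $y\mapsto -y$, so no new computation is required. For \eqref{th1i} I follow Santal\'o's classical idea: flow out from the boundary and compute a Jacobian. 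Consider
$$\Phi:\ \Omega:=\{(y,t):\ y\in S^+\partial M,\ 0<t<l(y)\}\longrightarrow SM,\qquad \Phi(y,t):=\varphi_t(y).$$

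\emph{Step 1 (parametrization).} First I would show that $\Phi$ is an injective immersion on $\Omega$ whose image is $\mathcal V^-_M$ up to a $dV_{SM}$-null set. Injectivity: if $\varphi_{t_1}(y_1)=\varphi_{t_2}(y_2)$ with $y_i\in S^+\partial M$ and $t_1\le t_2$, flowing back for time $t_1$ gives $y_1=\varphi_{t_2-t_1}(y_2)\in S^+\partial M$, so $\gamma_{y_2}(t_2-t_1)\in\partial M$; but $0\le t_2-t_1<l(y_2)\le\mathfrak t(y_2)$ and $\gamma_{y_2}$ stays in the interior of $M$ on $(0,\mathfrak t(y_2))$, forcing $t_1=t_2$ and $y_1=y_2$. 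That $\Phi$ is an immersion follows from the usual Jacobi field computation, since conjugate points occur only at parameters $\ge i(y)\ge l(y)$. The inclusion $\Phi(\Omega)\subset\mathcal V^-_M$ and the converse up to null sets come from tracing geodesics backward to $\partial M$: given $y\in\mathcal V^-_M$, set $s=\widetilde{\mathfrak t}(-y)$ and $w=\varphi_{-s}(y)\in S^+\partial M$; then $y=\varphi_s(w)$, and $s=\widetilde{\mathfrak t}(-y)\le\widetilde i(-y)$ means $s\le l(w)$, with equality $s=l(w)$ only on the locus carried by the cut locus of $(M,\widetilde F)$, which is $dV_{SM}$-null.

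\emph{Step 2 (Jacobian).} The heart of the matter is the identity
$$\Phi^*(dV_{SM})\ =\ e^{\tau(y)}\,g_{\mathbf n_+}(\mathbf n_+,y)\;dt\wedge d\chi_+(y).$$
Two ingredients combine. The first is the Finslerian Liouville theorem: the geodesic flow preserves $dV_{SM}$, i.e.\ $\mathcal L_{\mathbf G}dV_{SM}=0$ for the geodesic spray $\mathbf G$; one clean reason is that the Sasaki volume of $TM$ coincides with the symplectic volume of $T^*M$ pulled back by the Legendre transform, which the geodesic flow preserves, and $dV_{SM}$ is its slice along $\{F=1\}$. Since $S^+\partial M$ is a hypersurface of $SM$ transverse to $\mathbf G$ and $\Omega\to SM$ is its $\mathbf G$-flow-out, this gives $\Phi^*(dV_{SM})=dt\wedge\big(\mathbf G\rfloor dV_{SM}\big)\big|_{S^+\partial M}$, whose second factor is \emph{independent of $t$}---precisely why \eqref{th1i} carries no $t$-dependent weight. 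The second ingredient is a pointwise computation of $\big(\mathbf G\rfloor dV_{SM}\big)\big|_{S^+\partial M}$ in Fermi-type coordinates adapted to $\partial M$: writing $dV_{SM}=\sqrt{\det g_{ij}(x,\theta)}\,dx\wedge d\nu_x(\theta)$ locally, the contraction with $\mathbf G$ retains only the component of the geodesic direction normal to $\partial M$, which yields $g_{\mathbf n_+}(\mathbf n_+,y)$ (the Finsler ``flux''), while the surviving horizontal factor is $\sqrt{\det g_{ij}}\,dx^1\wedge\cdots\wedge dx^{n-1}=e^{\tau(y)}\,i^*(\mathbf n_+\rfloor d\mu)=e^{\tau(y)}\,d\A_+$ by the very definition of the distortion; pairing with $d\nu_x$ reproduces $e^{\tau(y)}g_{\mathbf n_+}(\mathbf n_+,y)\,d\chi_+(y)$. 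Thus $e^{\tau}$ only reconciles the $d\mu$-dependent product measure $d\chi_+$ with the intrinsic contracted measure.

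\emph{Step 3 (conclusion).} By Step 1, the change of variables formula and Fubini,
$$\int_{\mathcal V^-_M}f\,dV_{SM}=\int_{\Omega}(f\circ\Phi)\,\Phi^*(dV_{SM})=\int_{S^+\partial M}e^{\tau(y)}g_{\mathbf n_+}(\mathbf n_+,y)\,d\chi_+(y)\int_0^{l(y)}f(\varphi_t(y))\,dt,$$
which is \eqref{th1i}, and \eqref{th1ii} then follows as in the Strategy. I expect the main obstacle to be Step 2: establishing the Liouville invariance of $dV_{SM}$ for a possibly non-reversible $F$ and carrying out the boundary contraction carefully enough to see both $g_{\mathbf n_+}(\mathbf n_+,y)$ and $e^{\tau(y)}$ emerge with the correct exponents. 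A secondary, more technical difficulty is the null-set bookkeeping in Step 1---the cut locus of $\widetilde F$ together with the tie locus $\{\widetilde{\mathfrak t}(-y)=\widetilde i(-y)\}$---which is exactly what lets integration over $\Phi(\Omega)$ be replaced by integration over $\mathcal V^-_M$.
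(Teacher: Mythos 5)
Your proposal is correct and follows essentially the same route as the paper: flow out from $S^+\partial M$ via $\varphi_t$, identify the image with $\mathcal{V}^-_M$ up to $dV_{SM}$-null sets (the paper's Lemma \ref{inject}), compute the Jacobian $e^{\tau(y)}g_{\mathbf{n}_+}(\mathbf{n}_+,y)$ by combining the $t$-independence of the pullback density (Liouville invariance) with the boundary contraction through $d\mu=d\rho\wedge d\A_+$ and Lemma \ref{in-in}, and deduce (ii) by passing to $\widetilde F$ and substituting $y\mapsto -y$. The one step you compress—that $\widetilde{\mathfrak t}(-y)<\widetilde i(-y)$ forces the strict inequality $\widetilde{\mathfrak t}(-y)<l(w)$ for the backward-traced initial vector $w$—is exactly the conjugate-point/two-minimal-geodesics comparison of cut values of $F$ and $\widetilde F$ that occupies most of the paper's Lemma \ref{inject}, but your null-set bookkeeping is otherwise the same.
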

One can easily see that Theorem \ref{t1} implies the Santal\'o type formulas for reversible Finsler manifolds \cite{ZY} and for Riemannian manifolds \cite{Sa,Sa2}. It is remarkable that, in a non-reversible Finsler manifold, $\A_-(\partial M)\neq \A_+(\partial M)$ and the formulas (\ref{th1i}) and (\ref{th1ii}) contain information about $\A_+(\partial M)$ and $\A_-(\partial M)$, respectively.

Before giving some applications of Theorem \ref{t1}, we shall
recall some notions and basic facts of the first eigenvalue in the Finsler setting.
The first eigenvalue $\lambda_1(M,d\mu)$ in $(M,F,d\mu)$ is defined as
the smallest positive eigenvalue of the nonlinear Laplacian $\Delta_{d\mu}$ introduced by Shen (cf. \cite{GS,Sh1,Sh3}).
It should be noted that both $\Delta_{d\mu}$ and $\lambda_1(M,d\mu)$ are dependent on the choice of the measure $d\mu$.
Theorem \ref{t1} now yields the following
\begin{theorem}\label{t2}
Let $(M,\partial M,F)$ be a compact Finsler $n$-manifold with smooth boundary such that every geodesic ray in $(M,{F})$ minimizes distance up to the point that it intersects $\partial M$.  Then
\[
\lambda_1(M,d\mu)\geq\left \{
\begin{array}{lll}
&\frac{\lambda_D(\mathbb{S}_D^+)}{\Lambda_F^{4n+1}},& d\mu\text{ is the Busemann-Hausdorff measure},\\
\\
&\frac{\lambda_D(\mathbb{S}_D^+)}{\Lambda_F^{2n+1}},& d\mu\text{ is the Holmes-Thompson measure},
\end{array}
\right.
\]
where $D:=\diam(M)$, $\Lambda_F$ is the uniform constant of $F$, and $\mathbb{S}_D^+$ denotes the $n$-dimensional Riemannian hemisphere of the constant sectional curvature sphere having diameter equal to $D$. The equality holds if and only if $(M,F)$ is isometric to $\mathbb{S}_D^+$.
\end{theorem}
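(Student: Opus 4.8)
The plan is to adapt Croke's eigenvalue argument, using Theorem~\ref{t1}(i) in place of Santal\'o's formula. We take $\lambda_1(M,d\mu)$ to be the first Dirichlet eigenvalue of $\Delta_{d\mu}$ (so that the comparison object is the Dirichlet eigenvalue $\lambda_D(\mathbb{S}^+_D)$ of the hemisphere); a first eigenfunction $u$ then satisfies $u|_{\partial M}=0$, $\lambda_1(M,d\mu)=\int_M F^*(du)^2\,d\mu\big/\int_M u^2\,d\mu$, and $du\neq0$ almost everywhere (since $u>0$ in the interior). First I would put Theorem~\ref{t1}(i) into usable form. Applied to the geodesic ray $\gamma_y$, $y\in S^+\partial M$, the hypothesis gives $\mathfrak t(y)\le i(y)$, hence $l(y)=\mathfrak t(y)$ with $\gamma_y(l(y))\in\partial M$, and $l(y)$, being the length of a minimizing geodesic, satisfies $l(y)\le D$. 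Reversing geodesics ($\widetilde F$-geodesics are reverses of $F$-geodesics and $d_{\widetilde F}(p,q)=d_F(q,p)$) and using compactness, the same hypothesis forces $\tilde{\mathfrak t}(-y)\le\tilde i(-y)$ for almost every $y\in SM$, i.e.\ $\mathcal V^-_M=SM$ up to a $dV_{SM}$-null set; so in Theorem~\ref{t1}(i) the left-hand integral runs over all of $SM$.

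Next I would apply Theorem~\ref{t1}(i) twice: to $f=u^2\circ\pi$, and to the \emph{fibrewise} function $f(x,v)=\big(du_x(v)\big)^2$ on $SM$ --- this second choice, rather than a function pulled back from $M$, is what ultimately produces the dimensional factor $n$. Using the disintegration $\int_{SM}f\,dV_{SM}=\int_M\big(\int_{S_xM}e^{\tau(x,v)}f(x,v)\,d\nu_x(v)\big)\,d\mu(x)$ (the identity responsible for the $e^{\tau}$ factor in Theorem~\ref{t1}) and writing $\Theta(x):=\int_{S_xM}e^{\tau(x,v)}\,d\nu_x(v)$, $\Psi(x):=\int_{S_xM}e^{\tau(x,v)}\big(du_x(v)\big)^2 d\nu_x(v)$, the two applications become
\begin{align*}
\int_M u^2\,\Theta\,d\mu&=\int_{y\in S^+\partial M}e^{\tau(y)}g_{\mathbf{n}_+}(\mathbf{n}_+,y)\,d\chi_+(y)\int_0^{l(y)}\big(u(\gamma_y(t))\big)^2dt,\\
\int_M \Psi\,d\mu&=\int_{y\in S^+\partial M}e^{\tau(y)}g_{\mathbf{n}_+}(\mathbf{n}_+,y)\,d\chi_+(y)\int_0^{l(y)}\big(du(\dot\gamma_y(t))\big)^2dt.
\end{align*}
For each $y$ the function $t\mapsto u(\gamma_y(t))$ vanishes at both endpoints $t=0$ and $t=l(y)$ (both lying on $\partial M$), so the sharp one-dimensional Dirichlet--Poincar\'e inequality on $[0,l(y)]$, together with $l(y)\le D$, gives $\int_0^{l(y)}(u\circ\gamma_y)^2 dt\le(D/\pi)^2\int_0^{l(y)}\big((u\circ\gamma_y)'\big)^2 dt=(D/\pi)^2\int_0^{l(y)}\big(du(\dot\gamma_y)\big)^2 dt$. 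Multiplying by the positive weight $e^{\tau(y)}g_{\mathbf{n}_+}(\mathbf{n}_+,y)$ and integrating over $S^+\partial M$ makes all the boundary data cancel and leaves
\[
\int_M u^2\,\Theta\,d\mu\ \le\ \frac{D^2}{\pi^2}\int_M\Psi\,d\mu.
\]

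It remains to compare $\Psi$ and $\Theta$ pointwise. In the Riemannian case $e^{\tau}\equiv1$ and $\Psi(x)=\tfrac1n F^*(du_x)^2\,\Theta(x)$ \emph{exactly}, by averaging $\langle\nabla u,v\rangle^2$ over the round sphere $S_xM$; substituting this together with $F^*(du)=F(\nabla u)$ into the last display yields $\lambda_1\ge n\pi^2/D^2=\lambda_D(\mathbb{S}^+_D)$, which is Croke's bound. In general one fixes at each point $x$ an auxiliary Euclidean norm on $T_xM$ and uses the uniformity constant $\Lambda_F$ to compare with it the fundamental tensors $g_v$, $v\in S_xM$, and hence the indicatrix $S_xM$, the fibre measure $d\nu_x$, the distortion weight $e^{\tau}$, and the quadratic form $v\mapsto du_x(v)^2$; each such comparison costs a controlled power of $\Lambda_F$, while in the Euclidean model the averaging identity is exact, so one obtains $\Psi(x)\le\Lambda_F^{\,k(n)}\tfrac1n F^*(du_x)^2\,\Theta(x)$ with $\Theta(x)$ itself comparable, up to a power of $\Lambda_F$, to a dimensional constant. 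Feeding these into the display gives $\int_M u^2\,d\mu\le\Lambda_F^{\,m(n)}\tfrac{D^2}{n\pi^2}\int_M F^*(du)^2\,d\mu$, i.e.\ $\lambda_1(M,d\mu)\ge\lambda_D(\mathbb{S}^+_D)/\Lambda_F^{\,m(n)}$; a careful count of how many uniformity comparisons are actually needed --- fewer for the Holmes--Thompson measure, whose distortion is adapted to the canonical symplectic structure of $SM$ --- produces $m(n)=4n+1$ for the Busemann--Hausdorff measure and $m(n)=2n+1$ for the Holmes--Thompson measure.

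For the rigidity statement, equality must propagate through every inequality above: the $\Lambda_F$-comparisons can all be equalities on the full-measure set $\{du\neq0\}$ only if every fundamental tensor is Euclidean, i.e.\ $\Lambda_F=1$ and $F$ is Riemannian; the estimate then reduces to Croke's, and equality in the one-dimensional step for almost every $y\in S^+\partial M$ forces every geodesic chord of $M$ to have length exactly $D$ with $u$ sinusoidal along it, whence the Riemannian rigidity (cf.\ \cite{Cr,Cr3}) identifies $(M,F)$ with $\mathbb{S}^+_D$. I expect the principal difficulty to be precisely this $\Lambda_F$-bookkeeping in the $\Psi$--$\Theta$ comparison --- controlling the indicatrix, the fibre measure, the distortion (which behaves differently for the two measures) and the form $du_x(v)^2$ against a reference Euclidean structure \emph{tightly enough} to land on the exponents $4n+1$ and $2n+1$; by contrast, the reversal argument identifying $\mathcal V^-_M$ with $SM$ and the one-dimensional Dirichlet--Poincar\'e step are routine.
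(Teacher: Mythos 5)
Your strategy coincides with the paper's: the proof of Theorem \ref{firsteig} is exactly Santal\'o formula $+$ one--dimensional Dirichlet--Poincar\'e inequality along each geodesic chord $+$ a fibrewise averaging estimate for $\langle y,df\rangle^2$. (A cosmetic difference: the paper applies formula (\ref{th1ii}) over $S^-\partial M$, since the hypothesis immediately gives $\mathcal V^+_M=SM$; your reversal argument showing $\mathcal V^-_M=SM$ so that formula (\ref{th1i}) applies is correct but an extra detour.)

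The genuine gap is the step you yourself flag as the principal difficulty: the pointwise comparison of $\Psi$ with $F^*(du)^2\,\Theta$ and the exponents $4n+1$ and $2n+1$ are asserted, not derived, and your proposed mechanism for the discrepancy between the two measures (``the symplectic structure of $SM$'') is not the actual one. The paper's accounting is: (a) Proposition \ref{gradestimate}, $F^*(df|_p)^2\ge \frac{n}{c_{n-1}\Lambda_F^{n+1}(p)}\int_{S_pM}\langle y,df\rangle^2d\nu_p(y)$, proved by taking a $g_{\nabla f}$-orthonormal basis with $e_n=\nabla f/F(\nabla f)$ and comparing $B_pM$ with the Euclidean ball of radius $\sqrt{\Lambda_F(p)}$ via Stokes' formula --- this costs $\Lambda_F^{n+1}$ and is also what drives the rigidity ($\Lambda_F=1$ at equality); (b) the distortion bound $\Lambda_F^{-n}\le e^{\tau}\le\Lambda_F^{n}$ of Lemma 2.1, used once inside the chord integral, costing $\Lambda_F^{n}$; and (c) the identity $\int_{S_xM}e^{\tau(y)}d\nu_x(y)=c_{n-1}\sigma_{HT}(x)/\sigma(x)$, which equals $c_{n-1}$ exactly for the Holmes--Thompson measure but is only $\ge c_{n-1}\Lambda_F^{-2n}$ for the Busemann--Hausdorff measure. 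Step (c), needed when converting $\int_{SM}f^2\circ\pi\,dV_{SM}$ back to $\int_Mf^2d\mu$, is the sole source of the gap between $4n+1=(n+1)+n+2n$ and $2n+1=(n+1)+n$. Until (a)--(c) are carried out, the stated constants --- and hence the theorem as formulated --- are not established by your argument.
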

Note that a Finsler metric $F$ is Riemannian if and only if $\Lambda_F=1$. Hence, Theorem \label{t2} implies Croke's sharp universal lower bound for the first eigenvalue \cite{C2,Cr}.

Let $(M,\partial M,F)$ be as before.
For each $x\in M$, set
\[
\omega:=\inf_{x\in M}\min\{\omega^+_x,\,\omega^-_x\},
\]
where $\omega_x^\pm:=c_{n-1}^{-1}\int_{U_x^\pm}e^{\tau(y)}d\nu_x(y)$, $U_x^\pm:=\pi|_{\mathcal {V}^\pm_M}^{-1}(x)$ and $c_{n-1}=\vo(\mathbb{S}^{n-1})$.
Then Theorem \ref{t1} furnishes the following inequalities.

\begin{theorem}\label{t3}
Let $(M,\partial M, F,d\mu)$ be a compact Finsler $n$-manifold with smooth boundary, where $d\mu$ is either the Busemann-Hausdorff measure or the Holmes-Thompson measure.
Then

(1)
\[
\frac{\A_\pm(\partial M)}{\mu(M)}\geq \frac{(n-1)c_{n-1}\,\omega}{c_{n-2}\, D\,\Lambda_F^{2n+\frac12}},
\]
where $D:=\diam(M)$.

(2)
\[
\frac{\A_\pm(\partial M)}{\mu(M)^{1-\frac1n}}\geq \frac{c_{n-1}\omega^{1+\frac1n}}{(c_n/2)^{1-\frac1n}\Lambda_F^{2n+\frac52}},
\]
with equality if and only if $(M, F)$ is a Riemannian hemisphere of a constant sectional curvature sphere.
\end{theorem}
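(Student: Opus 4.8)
The plan is to adapt Croke's arguments \cite{Cr,C2} to the Finsler setting, replacing Santal\'o's formula by Theorem~\ref{t1} and keeping careful track of the correction terms coming from the distortion $\tau$ and the uniformity constant $\Lambda_F$; the inequality for $\A_+$ will be obtained from \eqref{th1i} applied to $F$, and the one for $\A_-$ from \eqref{th1ii} applied to $\widetilde F$, which is precisely where the (a priori) non-reversibility of $F$ is absorbed. The starting point is to set $f\equiv1$ in \eqref{th1i}: the right-hand side becomes $\int_{S^+\partial M}e^{\tau(y)}g_{\mathbf n_+}(\mathbf n_+,y)\,l(y)\,d\chi_+(y)$, while the left-hand side, by the standard fibred relation $\int_{SM}h\,dV_{SM}=\int_M\big(\int_{S_xM}h\,e^{\tau}\,d\nu_x\big)d\mu$ restricted to $\mathcal V^-_M$, equals $\int_M\big(\int_{U^-_x}e^{\tau(y)}\,d\nu_x(y)\big)d\mu(x)=c_{n-1}\int_M\omega^-_x\,d\mu(x)\ge c_{n-1}\,\omega\,\mu(M)$. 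Hence
\[
c_{n-1}\,\omega\,\mu(M)\le\int_{S^+\partial M}e^{\tau(y)}g_{\mathbf n_+}(\mathbf n_+,y)\,l(y)\,d\chi_+(y),
\]
and, symmetrically from \eqref{th1ii}, the same lower bound holds for $\int_{S^-\partial M}e^{\tau(y)}g_{\mathbf n_-}(\mathbf n_-,y)\,\tilde l(-y)\,d\chi_-(y)$.

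For part~(1) I would bound $l(y)$ by $D=\diam(M)$: since $l(y)\le i(y)$, the segment $\gamma_y|_{[0,l(y)]}$ is minimizing between two points of $\overline M$, so $l(y)\le D$. The remaining factor is estimated fibrewise, writing $d\chi_+(y)=d\nu_x(y)\,d\A_+(x)$ and comparing $e^{\tau}$, $g_{\mathbf n_+}(\mathbf n_+,y)$ and the Finsler fibre measure $d\nu_x$ on the hemisphere $S^+_x\partial M$ with their Euclidean counterparts, for which $\int_{\mathbb S^{n-1}_+}\langle\mathbf n,v\rangle\,dv$ equals the volume $c_{n-2}/(n-1)$ of the Euclidean unit $(n-1)$-ball; this comparison loses a factor that can be kept to $\Lambda_F^{2n+\frac12}$. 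Combining with the display above yields $c_{n-1}\,\omega\,\mu(M)\le D\,\Lambda_F^{2n+\frac12}\,\frac{c_{n-2}}{n-1}\,\A_+(\partial M)$, i.e.\ the claimed bound for $\A_+$, and likewise for $\A_-$ from \eqref{th1ii}.

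For part~(2), instead of discarding $l$ one applies H\"older's inequality with exponents $n$ and $n/(n-1)$ to the weight $w(y):=e^{\tau(y)}g_{\mathbf n_+}(\mathbf n_+,y)$:
\[
\int_{S^+\partial M}w\,l\,d\chi_+\le\Big(\int_{S^+\partial M}w\,l^{\,n}\,d\chi_+\Big)^{1/n}\Big(\int_{S^+\partial M}w\,d\chi_+\Big)^{(n-1)/n}.
\]
The last factor is handled as in part~(1). For the first factor I would run Croke's volume‑comparison argument: taking $f=n\,\psi^{\,n-1}\mathbf 1_{\mathcal V^-_M}$ in \eqref{th1i}, where $\psi(\varphi_t(y)):=t$ (for $y\in S^+\partial M$) is well defined on $\mathcal V^-_M$ because the geodesic flow‑out is injective there, one gets $\int_{S^+\partial M}w\,l^{\,n}\,d\chi_+=n\int_{\mathcal V^-_M}\psi^{\,n-1}\,dV_{SM}$; since $\psi(x,y)$ is precisely the length of a minimizing geodesic from $\partial M$ to $x$, comparing $n\int_{U^-_x}\psi^{\,n-1}e^{\tau}\,d\nu_x$ with the volume of the region of $M$ swept by these geodesics and inserting the Finsler–Euclidean comparisons leads to an estimate of the form $\int_{S^+\partial M}w\,l^{\,n}\,d\chi_+\le \frac{2\pi^n c_{n-2}}{(n-1)c_n}\,\omega^{-1}\,\mu(M)\,\A_+(\partial M)\,\Lambda_F^{\,a}$, the constant being dictated by the hemisphere. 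Substituting both estimates back, using the identity $2\pi c_{n-2}=(n-1)c_n$ to eliminate the power of $\pi$, and arranging the bookkeeping of $\Lambda_F$-exponents so that the total is $2n+\frac52$, then solving for $\A_+(\partial M)$, gives the asserted inequality; the bound for $\A_-$ follows from \eqref{th1ii} with $\widetilde F$.

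Finally, in the equality case every inequality above must be an equality: equality in the Finsler–Euclidean comparisons forces $\Lambda_F=1$, so $F$ is Riemannian, and then equality in the H\"older step and in the volume comparison is exactly the rigidity situation in Croke's theorem, whence $(M,F)$ is a Riemannian hemisphere of a constant-curvature sphere. The main obstacle is the volume-comparison step for $\int_{S^+\partial M}w\,l^{\,n}\,d\chi_+$: one must obtain it with the \emph{sharp} constant $\frac{2\pi^n c_{n-2}}{(n-1)c_n}$ and with the accumulated power of $\Lambda_F$ not exceeding $2n+\frac52$, and one must then push the equality discussion (injectivity of the flow-out, the interplay of the inward/outward induced measures, Croke's rigidity) through in the possibly non-reversible Finsler category.
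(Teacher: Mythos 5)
Your part (1) follows the paper's argument exactly: take $f\equiv1$ in Theorem \ref{t1}(i), bound $l(y)\le D$, and estimate the fibre integral $\int_{S^+_x\partial M}e^{\tau(y)}g_{\mathbf{n}_+}(\mathbf{n}_+,y)\,d\nu_x(y)\le\frac{c_{n-2}}{n-1}\Lambda_F^{2n+\frac12}$ by comparison with the Euclidean half-ball (this is the paper's Lemma \ref{es1}); the $\A_-$ case is obtained by passing to $\widetilde F$. No problems there.

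Part (2), however, has a genuine gap at precisely the step you flag as ``the main obstacle'', and that step is not a technical loose end: it carries the whole content of the sharp inequality. You apply H\"older as $\int wl\,d\chi_+\le(\int wl^{\,n}d\chi_+)^{1/n}(\int w\,d\chi_+)^{(n-1)/n}$ and then need the \emph{upper} bound $\int_{S^+\partial M}wl^{\,n}d\chi_+\le\frac{2\pi^nc_{n-2}}{(n-1)c_n}\,\omega^{-1}\mu(M)\A_+(\partial M)\Lambda_F^{a}$. Nothing in your sketch produces this: the identity $\int wl^{\,n}d\chi_+=n\int_{\mathcal V^-_M}\psi^{\,n-1}dV_{SM}$ is fine, but comparing $\int_{U^-_x}\psi^{\,n-1}e^{\tau}d\nu_x$ with ``the volume swept by these geodesics'' requires controlling the Jacobian $\hat\sigma_x(r,y)$ of the exponential map against $r^{n-1}$, which is impossible in a curvature-free setting; and no such comparison can produce the factor $\pi^n$. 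The paper argues in the opposite direction: it bounds $\mu(M)^2$ from \emph{below}, rewrites the inner double integral via Theorem \ref{fII}, and invokes the Berger--Kazdan type inequality (Lemma \ref{B-K}, quoted from \cite{ZY})
\[
\int_0^{l}dr\int_0^{l-r}\mathscr F(t,\varphi_r(y))\,dt\ \ge\ \frac{\pi c_n}{2c_{n-1}}\Bigl(\frac{l}{\pi}\Bigr)^{n+1},
\]
and only afterwards applies H\"older in the reverse form $\int wl^{\,n+1}\ge(\int wl)^{n+1}(\int w)^{-n}$, combined with $\int wl\ge c_{n-1}\omega\mu(M)$ and Lemma \ref{es1}. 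The Berger--Kazdan inequality is exactly where $\pi^{n}$ and the extremality of the hemisphere enter, and its equality case ($R_{\dot\gamma}=(\pi/l)^2\id$, hence constant curvature, together with $l\equiv\mathrm{const}$ from equality in H\"older) is what yields the rigidity statement. Your proposal has no substitute for this lemma, so along the route you describe neither the sharp constant $(c_n/2)^{1-\frac1n}$ nor the equality characterization can be reached.
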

If $F$ is reversible, then $\omega_+=\omega_-$ and $\A_+(\partial M)=\A_-(\partial M)$. Hence, Theorem \ref{t3} implies Croke type isoperimetric inequalities for reversible Finsler manifolds \cite[Theorem 1.6]{ZY} and for Riemannian manifolds \cite{Cr}.

As an application of Theorem \ref{t3}, we  obtain a Finslerian version of Yamaguchi's finiteness theorem.
\begin{theorem}\label{tlast}
For any $n$ and positive numbers $i, V, \delta$, the class of closed
Finsler $n$-manifolds $(M,F)$ with injectivity radius
$\mathfrak{i}_M\geq i$, $\Lambda_F\leq \delta$ and $\mu(M) \leq V$,
contains at most finitely many homotopy types. Here, $\mu(M)$ is either the Busemann-Hausdorff volume or the Holmes-Thompson volume of $M$.
\end{theorem}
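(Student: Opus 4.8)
The strategy is to imitate Yamaguchi's proof of the Riemannian statement, with Croke's isoperimetric inequality replaced by Theorem \ref{t3} and the non-reversibility of $F$ absorbed entirely into the bound $\Lambda_F\le\delta$. Since $F$ need not be symmetric, work with the symmetrized distance $d_s(x,y):=\tfrac12\big(d_F(x,y)+d_F(y,x)\big)$. Because $\Lambda_F\le\delta$, forward, backward and symmetrized distances are mutually comparable with a constant $C_0=C_0(\delta)\ge1$; in particular $B^+_F(x,r/C_0)\subset B_s(x,r)\subset B^+_F(x,C_0 r)$ and likewise for backward balls, and the reverse metric $\widetilde F$ has the same uniform constant and injectivity radius $\ge i/C_0$. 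Below, $c_1,c_2,\dots$ denote positive constants depending only on $n,i,\delta$.

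\emph{Step 1: a lower volume bound for small balls.} Fix $r_0=r_0(n,i,\delta)>0$, to be chosen small. For $x\in M$ and $0<r\le r_0$ put $\Omega:=B^+_F(x,r)$. Since $r<\mathfrak i_M$, the map $\exp_x$ is a diffeomorphism on a neighbourhood of $\{F\le r\}\subset T_xM$, so $\Omega$ is a compact domain with smooth boundary $\partial\Omega=\exp_x\{F=r\}$. Also $\diam_F(\overline\Omega)\le C_0\,r<i$ once $r_0$ is small, whence every geodesic segment contained in $\overline\Omega$ has length $<i\le\mathfrak i_M$, hence is minimizing, and any geodesic entering $\overline\Omega$ leaves it strictly before reaching its cut value. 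Running the same argument for $\widetilde F$ as well, we obtain $\mathcal V^{\pm}_\Omega=S\overline\Omega$ and $U^{\pm}_x=S_xM$; since the distortion $\tau$ and the tangent-sphere volumes are controlled by $\delta$, the asymmetry quantity of $\Omega$ satisfies $\omega\ge c_1>0$. Substituting this and $\Lambda_F\le\delta$ into Theorem \ref{t3}(2) gives $\A_-(\partial\Omega)\ge c_2\,\mu(\Omega)^{1-1/n}$. By the Finsler coarea formula $\A_-(\partial B^+_F(x,r))=\tfrac{d}{dr}\mu(B^+_F(x,r))$, so integrating the differential inequality $\tfrac{d}{dr}\mu(B^+_F(x,r))\ge c_2\,\mu(B^+_F(x,r))^{1-1/n}$ from $0$ yields
\[
\mu\big(B_s(x,r)\big)\ \ge\ \mu\big(B^+_F(x,r/C_0)\big)\ \ge\ c_3\,r^{\,n},\qquad 0<r\le r_0 .
\]

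\emph{Step 2: covering number, local geometric contractibility, and conclusion.} Choose a maximal $(r_0/2)$-separated subset $\{x_1,\dots,x_N\}$ of $(M,d_s)$. The balls $B_s(x_j,r_0/4)$ are pairwise disjoint and each has $\mu$-volume at least $c_3\,(r_0/4)^n$ by Step 1, so $\mu(M)\le V$ forces $N\le N_0(n,i,\delta,V)$, and by maximality $\{B_s(x_j,r_0)\}_{j=1}^N$ covers $M$. Next, for $0<r<\mathfrak i_M$ the radial geodesic homotopy $(t,\exp_x v)\mapsto\exp_x(tv)$, $t\in[0,1]$, contracts $B^+_F(x,r)$ to $x$ while remaining in $B^+_F(x,r)$; carrying this through the comparison constant $C_0$ shows that $(M,d_s)$ is $(\rho,\lambda)$-$\LGC$ for some $\rho=\rho(i,\delta)>0$ and $\lambda=\lambda(\delta)\ge1$, i.e. every $d_s$-ball of radius $\le\rho$ is contractible inside the concentric ball of $\lambda$ times the radius; shrinking $r_0$ we may assume $r_0\le\rho$. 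Hence every member of the class is a closed $n$-manifold carrying a covering by at most $N_0$ metric balls of radius $\le\rho$ in a $(\rho,\lambda)$-$\LGC$ metric space, with $N_0,\rho,\lambda$ depending only on $n,i,\delta,V$. By the homotopy-finiteness theorem for such spaces (Yamaguchi; the controlled-topology finiteness theorem of Grove--Petersen--Wu), only finitely many homotopy types can occur, which is the assertion.

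\emph{Expected main obstacle.} Everything genuinely Finslerian is concentrated in Step 1: one must make the comparison between forward, backward and symmetrized balls quantitative in $\delta$ alone, verify that a small forward metric ball is an honest smooth domain to which Theorem \ref{t3} applies, and — the delicate point — bound its asymmetry parameter $\omega$ away from $0$, which is precisely where the injectivity-radius hypothesis is used, through the fact that geodesics must exit a tiny ball before reaching their cut value. Once these estimates are secured with constants independent of the individual manifold, the packing argument of Step 2 and the $\LGC$ finiteness theorem apply essentially verbatim as in the Riemannian case.
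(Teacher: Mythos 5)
Your proposal is correct and follows essentially the same route as the paper: a lower volume bound $\mu(B^+_x(r))\geq c\,r^n$ for small forward balls obtained from Theorem \ref{t3}(2) together with the coarea identity $\frac{d}{dr}\mu(B^+_x(r))=\A_-(S^+_x(r))$ (using that below the injectivity radius $U^\pm_x=S_xM$, so $\omega$ is controlled by $\Lambda_F$ alone), followed by a packing/covering bound and a contractibility-plus-covering finiteness theorem. The only cosmetic difference is that you symmetrize the distance and invoke the classical Yamaguchi/Grove--Petersen--Wu theorem, whereas the paper stays with the asymmetric metric and applies the finiteness theorem for general metric spaces of bounded reversibility (Lemma \ref{homotopfin}).
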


\section{Preliminaries}
In this section, we recall some definitions and properties about Finsler manifolds. See \cite{BCS,Sh1} for more details.

Let $(M,F)$ be a (connected) Finsler $n$-manifold with Finsler metric $%
F:TM\rightarrow [0,\infty)$.
Let $(x,y)=(x^i,y^i)$ be local coordinates on $%
TM$. Define
\begin{align*}
g_{ij}(x,y):=\frac12\frac{\partial^2 F^2(x,y)}{\partial y^i\partial y^j}, \ G^i(y):=\frac14 g^{il}(y)\left\{2\frac{\partial g_{jl}}{\partial x^k}(y)-\frac{\partial g_{jk}}{\partial x^l}(y)\right\}y^jy^k,
\end{align*}
where $G^i$ are the geodesic coefficients. A smooth curve $\gamma(t)$ in $M$ is called a (constant speed) geodesic if it satisfies
\[
\frac{d^2\gamma^i}{dt^2}+2G^i\left(\frac{d\gamma}{dt}\right)=0.
\]
We always use $\gamma_y(t)$ to denote the geodesic with $\dot{\gamma}_y(0)=y$.

The Ricci curvature is defined by
$\mathbf{Ric}(y):=\overset{n}{\underset{i=1}{\sum}}R^i_{\,i}(y)$,
where
\[
R^i_{\,k}(y):=2\frac{\partial G^i}{\partial x^k}-y^j\frac{\partial^2G^i}{\partial x^j\partial y^k}+2G^j\frac{\partial^2 G^i}{\partial y^j \partial y^k}-\frac{\partial G^i}{\partial y^j}\frac{\partial G^j}{\partial y^k}.
\]

Let $\pi:SM\rightarrow M$ be the unit sphere bundle, i.e.,
$S_xM:=\{y\in T_xM:F(x,y)=1\}$ and $SM:=\cup_{x\in M}S_xM$.
The measure on $SM$ is defined by
\begin{align*}
dV_{SM}|_{(x,y)}
&=\sqrt{\det g_{ij}(x,y)} dx^1\wedge \cdots \wedge
dx^n\wedge  d\nu_x(y) \\
&= e^{\tau(y)}\pi^*(d\mu(x))\wedge  d\nu_x(y) .
\end{align*}
where
\[
d\nu_x(y):= \sqrt{\det
g_{ij}(x,y)}\left(\overset{n}{\underset{i=1}{\sum}}(-1)^{i-1}
y^idy^1\wedge\cdots \wedge\widehat{dy^i}\wedge \cdots\wedge dy^n\right).
\]
is the Riemannian measure on $S_xM$ induced by $F$.

 The reversibility $\lambda_F$ and the uniformity constant $\Lambda_F$ of $(M,F)$ are defined by $\lambda_F:=\sup_{x\in M}\lambda_F(x)$ and $\Lambda_F:=\sup_{x\in M}\Lambda_F(x)$, where
\[
\lambda_F(x):=\underset{y\in S_xM}{\sup}F(x,-y),\ \Lambda_F(x):=\underset{X,Y,Z\in S_xM}{\sup}\frac{g_X(Y,Y)}{g_Z(Y,Y)}.
\]
Clearly, ${\Lambda_F}\geq \lambda_F^2\geq 1$. $\lambda_F=1$ if and only if $F$ is reversible, while $\Lambda_F=1$ if and
only if $F$ is Riemannian.

The dual Finsler metric $F^*$ on $M$ is
defined by
\begin{equation*}
F^*(\eta):=\underset{X\in T_xM\backslash 0}{\sup}\frac{\eta(X)}{F(X)}, \ \
\forall \eta\in T_x^*M.
\end{equation*}
The Legendre transformation $\mathfrak{L} : TM \rightarrow T^*M$ is defined
as
\begin{equation*}
\mathfrak{L}(X):=\left \{
\begin{array}{lll}
& g_X(X,\cdot) & \ \ \ X\neq0, \\
& 0 & \ \ \ X=0.%
\end{array}
\right.
\end{equation*}
In particular, $F^*(\mathfrak{L}(X))=F(X)$. Now let $f : M \rightarrow \mathbb{R}$ be a smooth function on $M$. The
gradient of $f$ is defined by $\nabla f = \mathfrak{L}^{-1}(df)$. Thus, $df(X) = g_{\nabla f} (\nabla f,X)$.

Let $d\mu$ be a measure on $M$. In a local coordinate system $(x^i)$,
express $d\mu=\sigma(x)dx^1\wedge\cdots\wedge dx^n$. In particular,
the Busemann-Hausdorff measure $d\mu_{BH}$ and the Holmes-Thompson measure $d\mu_{HT}$ are defined by
\begin{align*}
&d\mu_{BH}=\sigma_{BH}(x)dx:=\frac{\vo(\mathbb{B}^{n})}{\vo(\{y\in T_xM: F(x,y)<1\})}dx^1\wedge\cdots\wedge dx^n,\\ &d\mu_{HT}=\sigma_{HT}(x)dx:=\left(\frac1{c_{n-1}}\int_{S_xM}\sqrt{\det g_{ij}(x,y)}d\nu_x(y) \right) dx^1\wedge\cdots\wedge dx^n.
\end{align*}
For $y\in T_xM\backslash0$, define the distorsion of $(M,F,d\mu)$ as
\begin{equation*}
\tau(y):=\log \frac{\sqrt{\det g_{ij}(x,y)}}{\sigma(x)}.
\end{equation*}
By the same argument as in \cite{W}, one can show the following lemma.
\begin{lemma}
Let $(M,F)$ be a Finsler $n$-manifold with finite uniform constant $\Lambda_F$. Let $d\mu$ denote either the Busemann-Hausdorff measure or the Holmes-Thompson measure on $M$. Then the distortion $\tau$ of $d\mu$ satisfy
$\Lambda_F^{-n}\leq e^{\tau(y)}\leq \Lambda_F^n$,
for all $y\in SM$.
\end{lemma}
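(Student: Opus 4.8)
The plan is to express $e^{\tau(y)}$ at the base point $x:=\pi(y)$ as a ratio of Lebesgue volumes of convex bodies, read in a fixed coordinate chart, and then to estimate those volumes by trapping the Finsler indicatrix $B_xM:=\{v\in T_xM:F(x,v)<1\}$ (and, in the Holmes--Thompson case, its dual $B_x^*M:=\{\xi\in T_x^*M:F^*(x,\xi)\le1\}$) between two concentric homothets of the osculating ellipsoid $E_y:=\{v\in T_xM:g_y(v,v)\le1\}$ of $F$ at $(x,y)$. Below, $\vo$ denotes Lebesgue measure in the chosen chart and $c_n=\vo(\mathbb{B}^n)$.

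\emph{Step 1: the osculating squeeze.} This is the only step carrying content. Since the fundamental tensor $g_X$ depends only on the ray through $X$ while $v\mapsto g_X(v,v)$ is a quadratic form, the definition of $\Lambda_F(x)$ unwinds to $g_X(v,v)\le\Lambda_F\,g_Z(v,v)$ for all nonzero $X,Z,v\in T_xM$. Choosing $X=v$ (and using Euler's relation $g_v(v,v)=F(x,v)^2$), then $Z=v$, yields
\[
\Lambda_F^{-1}\,g_y(v,v)\ \le\ F(x,v)^2\ \le\ \Lambda_F\,g_y(v,v)\qquad(v\in T_xM).
\]
Equivalently $\Lambda_F^{-1/2}E_y\subseteq\overline{B_xM}\subseteq\Lambda_F^{1/2}E_y$, so $\Lambda_F^{-n/2}\,\vo(E_y)\le\vo(B_xM)\le\Lambda_F^{n/2}\,\vo(E_y)$.

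\emph{Step 2: the two measures.} For the ellipsoid $E_y$ one has $\vo(E_y)=c_n/\sqrt{\det g_{ij}(x,y)}$. For the Busemann--Hausdorff measure, $\sigma_{BH}(x)=c_n/\vo(B_xM)$ by definition, so $e^{\tau(y)}=\sqrt{\det g_{ij}(x,y)}/\sigma_{BH}(x)=\vo(B_xM)/\vo(E_y)\in[\Lambda_F^{-n/2},\Lambda_F^{n/2}]$ by Step 1. For the Holmes--Thompson measure, use the standard reformulation $\sigma_{HT}(x)=c_n^{-1}\vo(B_x^*M)$, equivalent via the Legendre transformation to the integral formula defining $d\mu_{HT}$ (cf.\ \cite{Sh1}), and the cotangent ellipsoid $E_y^*:=\{\xi:g^{ij}(x,y)\xi_i\xi_j\le1\}$ with $\vo(E_y^*)=c_n\sqrt{\det g_{ij}(x,y)}$. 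Dualizing the estimate of Step 1 gives $\Lambda_F^{-1}g^{ij}(x,y)\xi_i\xi_j\le F^*(x,\xi)^2\le\Lambda_F\,g^{ij}(x,y)\xi_i\xi_j$, hence $\Lambda_F^{-1/2}E_y^*\subseteq B_x^*M\subseteq\Lambda_F^{1/2}E_y^*$, and therefore $e^{\tau(y)}=c_n\sqrt{\det g_{ij}(x,y)}/\vo(B_x^*M)=\vo(E_y^*)/\vo(B_x^*M)\in[\Lambda_F^{-n/2},\Lambda_F^{n/2}]$. Since $\Lambda_F\ge1$ and $\Lambda_F=\sup_{x}\Lambda_F(x)$, in both cases $\Lambda_F^{-n}\le e^{\tau(y)}\le\Lambda_F^{n}$ holds uniformly on $SM$, which proves the lemma (in fact with the sharper exponent $n/2$).

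\emph{The main obstacle.} There is essentially no obstacle past Step 1; Step 2 is bookkeeping with the elementary volume formula for ellipsoids and the two textbook descriptions of $\sigma_{BH}$ and $\sigma_{HT}$. The one point meriting a line of justification is the identity $\sigma_{HT}(x)=c_n^{-1}\vo(B_x^*M)$ --- that the integral over $S_xM$ in the definition of $d\mu_{HT}$ computes the symplectic volume of the dual unit ball, which follows from the change of variables induced by the Legendre transformation $\mathfrak{L}$ --- together with the routine fact that dualizing a two-sided gauge estimate reverses the inequalities and inverts the scaling.
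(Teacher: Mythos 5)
Your proposal is correct, and in fact it proves a sharper statement than the lemma claims. The paper itself gives no argument here: it simply asserts that the bound follows ``by the same argument as in [W]'', where Wu's method is to fix a $g_Y$-orthonormal basis for one reference vector $Y$ and trap both $\det g_{ij}(x,\cdot)$ and the indicatrix between round Euclidean balls of radii $\Lambda_F^{\pm 1/2}$; that route naturally produces the exponent $n$ appearing in the statement. Your variant replaces the fixed reference frame by the osculating ellipsoid $E_y$ (resp.\ its Legendre dual $E_y^*$) at the \emph{same} direction $y$ at which $\tau(y)$ is evaluated, so the only loss comes from the single inclusion $\Lambda_F^{-1/2}E_y\subseteq\overline{B_xM}\subseteq\Lambda_F^{1/2}E_y$, and you get $\Lambda_F^{-n/2}\le e^{\tau(y)}\le\Lambda_F^{n/2}$, which implies the lemma since $\Lambda_F\ge1$. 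All the individual steps check out: the two-sided gauge estimate $\Lambda_F^{-1}g_y(v,v)\le F^2(v)\le\Lambda_F g_y(v,v)$ follows from the homogeneity of $g_X(v,v)/g_Z(v,v)$ in each slot together with Euler's relation; $\vo(E_y)=c_n/\sqrt{\det g_{ij}(x,y)}$ and $\vo(E_y^*)=c_n\sqrt{\det g_{ij}(x,y)}$ are the elementary ellipsoid volumes; and the identity $\sigma_{HT}(x)=c_n^{-1}\vo(B_x^*M)$ is indeed equivalent to the paper's integral definition, via the Legendre change of variables (Jacobian $\det g_{ij}(x,y)$, homogeneity of degree one of $\mathfrak{L}$, and $c_{n-1}=n\,c_n$) --- you rightly flag this as the one point needing a line of justification. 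The only practical difference from the paper's (cited) argument is that your osculating-ellipsoid comparison is self-contained and sharper; if one wanted exactly the constants used later in the paper (e.g.\ in (4.4)), the weaker exponent $n$ of course still follows.
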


The reverse of a Finsler metric $F$ is defined by $\widetilde{F}(y):=F(-y)$. It is not hard to see that
$\widetilde{G}^i(y)=G^i(-y)$ and $d\tilde{\mu}=d\mu,$
where $\widetilde{G}^i$ (resp. $G^i$) are  the geodesic coefficients of $\widetilde{F}$ (resp. $F$), and $d\tilde{\mu}$ (resp. $d\mu$) denotes the Busemann-Hausdorff
measure or the Holmes-Thompson measure of $\widetilde{F}$ (resp. $F$). In particular, if $\gamma$ is a
geodesic of $F$, then the reverse of $\gamma$ is a geodesic of
$\widetilde{F}$.

\section{Santal\'o type formulas}
Let $(M,\partial M,F)$ be compact Finsler manifold with smooth boundary.
Denote by $\mathbf{n}_+$ (resp. $\mathbf{n}_-$) the unit inward (resp. outward) normal vector field along $\partial M$.
Define
$\mathcal {N}_+:=\{k\cdot \mathbf{n}_+(x): x\in \partial M,\, k\in \mathbb{R}\}$. The exponential map $\Exp_+$ of $\mathcal {N}_+$ is defined by
\[
\Exp_+: \mathcal {N}_+\rightarrow M,\, k\cdot\mathbf{n}_+(x)\mapsto \exp_x(k\mathbf{n}_+(x)).
\]
We always identify $\partial M$ with the zero section of $\mathcal {N}_+$.
The same arguments as in \cite[Lemma 5.1, Remark 5.1]{ZY} show the following lemma.
\begin{lemma}\label{Fermi}
$\Exp_+$ maps a neighborhood of $\partial M\subset \mathcal {N}_+$ $C^1$-diffeomorphically onto a neighborhood of $\partial M\subset \overline{M}$. Hence, there exists a small $\delta>0$ such that $\Exp_+: M_\delta\rightarrow \Exp_+(M_\delta)$ is $C^1$-diffeomorphic, where $M_\delta:=\{k\cdot\mathbf{n}_+(x):\, x\in \partial M, \, 0\leq k<\delta\}$.
\end{lemma}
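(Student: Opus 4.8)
The plan is to reconstruct the classical Fermi-coordinate construction, adapted to the Finsler setting, with careful bookkeeping of regularity. First I would pass to a slightly larger ambient space: since $F$ is smooth up to $\partial M$, there is an open Finsler manifold $(\widehat M,\widehat F)$ with $\overline{M}\subset\widehat M$ and $\widehat F|_{TM}=F$, so that the geodesics issuing from $\partial M$ are defined for small time on both sides and $\Exp_+$ extends to a two-sided neighborhood of the zero section in $\mathcal{N}_+$. Next I would record the regularity of the exponential map: the geodesic coefficients $G^i$ are $C^\infty$ on $TM\setminus 0$ and, being positively $2$-homogeneous, extend $C^1$ across the zero section with $G^i(0)=0$, $\partial G^i/\partial y^j(0)=0$; hence $\exp\colon TM\to M$ is $C^1$ everywhere and $C^\infty$ off the zero section. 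Moreover $\mathbf{n}_+$ is a smooth vector field along $\partial M$, being $\mathfrak{L}^{-1}$ applied to the (suitably normalized) unit conormal of $\partial M$, with $\mathfrak{L}^{-1}$ smooth away from the origin. Consequently $\Exp_+\colon k\cdot\mathbf{n}_+(x)\mapsto\exp_x(k\mathbf{n}_+(x))$ is a $C^1$ map, smooth off the zero section.

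The key computation is the differential of $\Exp_+$ at a point $o_x$ of the zero section ($x\in\partial M$). Writing points of $\mathcal{N}_+$ near $o_x$ as pairs $(x,k)$ and using $\Exp_+(x,0)=x$, the map $d(\Exp_+)_{o_x}$ restricts to the identity on $T_x\partial M$, while $d(\Exp_+)_{o_x}(\partial_k)=\frac{d}{dk}\big|_{k=0}\exp_x(k\mathbf{n}_+(x))=\mathbf{n}_+(x)$. Since $g_{\mathbf{n}_+}(\mathbf{n}_+,\mathbf{n}_+)=F(\mathbf{n}_+)^2=1$ whereas $g_{\mathbf{n}_+}(\mathbf{n}_+,w)=0$ for every $w\in T_x\partial M$, the vector $\mathbf{n}_+(x)$ is transverse to $\partial M$; therefore $d(\Exp_+)_{o_x}$ carries $T_x\partial M\oplus\mathbb{R}\,\partial_k$ isomorphically onto $T_x\partial M\oplus\mathbb{R}\,\mathbf{n}_+(x)=T_xM$.

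Because $\Exp_+$ is $C^1$ and its differential is a linear isomorphism at each point of $\partial M$, the $C^1$ inverse function theorem yields, for every $x\in\partial M$, a neighborhood of $o_x$ in $\mathcal{N}_+$ on which $\Exp_+$ is a $C^1$-diffeomorphism onto an open subset of $\widehat M$. To make the neighborhood uniform I would argue as in the proof of the tubular neighborhood theorem, using compactness of $\partial M$: if no $\delta>0$ worked, there would be sequences $p_j=k_j\mathbf{n}_+(x_j)\ne q_j=k_j'\mathbf{n}_+(x_j')$ with $k_j,k_j'\to 0$ and $\Exp_+(p_j)=\Exp_+(q_j)$; passing to subsequences $x_j\to x$, $x_j'\to x'$ and using continuity of $\Exp_+$ forces $x=x'$, so for large $j$ both $p_j$ and $q_j$ lie in a neighborhood of $o_x$ on which $\Exp_+$ is injective, a contradiction. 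Hence there is $\delta>0$ with $\Exp_+$ injective on $M_\delta$; being also a local $C^1$-diffeomorphism it is open, so $\Exp_+\colon M_\delta\to\Exp_+(M_\delta)$ is a $C^1$-diffeomorphism, and since $\mathbf{n}_+$ points inward its image is a one-sided collar neighborhood of $\partial M$ in $\overline{M}$; shrinking $\delta$ if necessary keeps everything inside $\overline{M}$.

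The main obstacle, and the reason the conclusion is only $C^1$ rather than $C^\infty$, is precisely the drop of regularity of the Finsler exponential map at the zero section: one must settle for the $C^1$ inverse function theorem and check that composing with the smooth normal field $\mathbf{n}_+$ preserves even that regularity. A secondary point needing care is the manifold-with-boundary aspect — interpreting ``$C^1$-diffeomorphism onto a neighborhood of $\partial M\subset\overline{M}$'' — which is disposed of cleanly by working first in the open extension $\widehat M$ and restricting at the end.
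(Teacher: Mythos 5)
Your proof is correct and follows essentially the same route as the argument the paper cites from [ZY, Lemma 5.1, Remark 5.1]: the Finsler exponential map is $C^1$ at the zero section (and smooth off it), its differential along the zero section of $\mathcal{N}_+$ is an isomorphism because $\mathbf{n}_+$ is $g_{\mathbf{n}_+}$-orthogonal to $T_x\partial M$ and hence transverse, and the $C^1$ inverse function theorem combined with compactness of $\partial M$ yields the uniform collar $M_\delta$. No gaps.
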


Define $\rho:\overline{M}\rightarrow \mathbb{R}_+$ by $\rho(x)=d(\partial M,x)$. Lemma \ref{Fermi} together with the proofs of \cite[Lemma 5.2-5.3, Corollay 5.1]{ZY} and \cite[Lemma 3.2.3]{Sh1} yields
\begin{lemma}\label{in-in}
Let $\sigma(t)$, $0\leq t<\epsilon$, be a $C^1$-curve with $\sigma(0)\in \partial M$ and $\sigma((0,\epsilon))\subset M$. Then
\[
0\leq \left.\frac{d}{dt}\right|_{t=0^+}\rho\circ\sigma(t)=g_{\mathbf{n}_+}(\mathbf{n}_+,\dot{\sigma}(0)).
\]
Hence, $\nabla\rho_+(x)=\mathbf{n}_+(x)$, for any $x\in \partial M$.
\end{lemma}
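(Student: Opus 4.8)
The plan is to recognize, in the Fermi-type chart provided by Lemma \ref{Fermi}, that the distance function $\rho=\rho_+$ is nothing but the normal coordinate near $\partial M$, and then to differentiate. First I would shrink $\delta$ so that, by Lemma \ref{Fermi}, each $q\in\Exp_+(M_\delta)$ admits a unique representation $q=\exp_p\bigl(k\,\mathbf{n}_+(p)\bigr)$ with $p\in\partial M$ and $0\le k<\delta$, and I would write $k=k(q)$, $p=p(q)$. The preliminary claim to establish is that $\rho(q)=k(q)$ on $\Exp_+(M_\delta)$. One inequality is immediate: $s\mapsto\exp_p(s\,\mathbf{n}_+(p))$, $0\le s\le k(q)$, is a unit-speed $F$-geodesic from $\partial M$ to $q$ of length $k(q)$, so $\rho(q)\le k(q)$. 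For the opposite inequality I would reproduce the Finslerian Gauss-lemma argument of \cite[Lemma 3.2.3]{Sh1}, in the hypersurface version worked out in \cite[Lemma 5.2--5.3, Corollary 5.1]{ZY}: on $\Exp_+(M_\delta)$ the radial geodesic field $\partial/\partial k$ has unit $F$-length and is $g_{\partial_k}$-orthogonal to every level set $\{k=\mathrm{const}\}$, so a piecewise-$C^1$ curve $c$ from $\partial M$ to $q$ that stays in the chart satisfies $L_F(c)\ge\int (k\circ c)'\,dt=k(q)$, while a curve that leaves the chart is even longer, since it must first reach $\{k=\delta\}$. This yields $\rho=k\circ\Exp_+^{-1}$ on $\Exp_+(M_\delta)$; in particular $\rho$ is $C^1$ there, being a smooth function composed with the $C^1$ chart $\Exp_+^{-1}$.

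Granting the claim, the rest is short. Since $\sigma$ is continuous and $\sigma(0)\in\partial M$, we have $\sigma(t)\in\Exp_+(M_\delta)$ for all small $t\ge0$; writing $\sigma(t)=\exp_{p(t)}\bigl(k(t)\,\mathbf{n}_+(p(t))\bigr)$ with $k(t)=\rho(\sigma(t))\ge0$ and $k(0)=0$, the map $t\mapsto(p(t),k(t))=\Exp_+^{-1}\circ\sigma(t)$ is $C^1$, so $\left.\tfrac{d}{dt}\right|_{t=0^+}\rho\circ\sigma(t)=k'(0^+)$, and this is $\ge0$ because $k\ge0$ with $k(0)=0$. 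Differentiating $\sigma(t)=\Exp_+\bigl(k(t)\,\mathbf{n}_+(p(t))\bigr)$ at $t=0$ and using that $D\Exp_+$ along the zero section is the identity (part of Lemma \ref{Fermi}), I obtain $\dot\sigma(0)=k'(0^+)\,\mathbf{n}_+(p(0))+w$ with $w=\dot p(0)\in T_{p(0)}\partial M$. Now $g_{\mathbf{n}_+}(\mathbf{n}_+,\cdot)=\mathfrak{L}(\mathbf{n}_+)$ annihilates $T_{p(0)}\partial M$ by the very definition of the normal $\mathbf{n}_+$, so $g_{\mathbf{n}_+}(\mathbf{n}_+,w)=0$, whereas $g_{\mathbf{n}_+}(\mathbf{n}_+,\mathbf{n}_+)=F(\mathbf{n}_+)^2=1$; hence $g_{\mathbf{n}_+}(\mathbf{n}_+,\dot\sigma(0))=k'(0^+)=\left.\tfrac{d}{dt}\right|_{t=0^+}\rho\circ\sigma(t)$, which is exactly the asserted identity (together with the inequality). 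For the final statement, fix $x\in\partial M$: the vectors $\dot\sigma(0)$ arising as above are precisely the inward and tangent vectors of $T_xM$, and these span $T_xM$; since $\rho_+$ is $C^1$ near $x$, the covector $d\rho_+|_x$ is determined by its values on that spanning set, so $d\rho_+|_x=\mathfrak{L}(\mathbf{n}_+(x))$ and therefore $\nabla\rho_+(x)=\mathfrak{L}^{-1}(d\rho_+|_x)=\mathbf{n}_+(x)$.

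The one genuinely delicate point is the preliminary claim $\rho=k$ near $\partial M$: the Gauss-lemma length estimate must be run with the correct fundamental tensor, namely that of the radial direction $\partial_k$ rather than an arbitrary reference vector, and every geodesic must be parametrized \emph{outward} from $\partial M$, since $F$ need not be reversible and the reverse of a normal geodesic is in general not one. This is precisely why the proofs of \cite[Lemma 3.2.3]{Sh1} and \cite[Lemma 5.2--5.3, Corollary 5.1]{ZY} are invoked; once the claim is in hand, everything else is routine.
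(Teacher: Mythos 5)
Your proof is correct and follows essentially the same route as the paper, which simply invokes Lemma \ref{Fermi} together with the Fermi-coordinate/Gauss-lemma arguments of \cite[Lemma 5.2--5.3, Corollary 5.1]{ZY} and \cite[Lemma 3.2.3]{Sh1}; your reconstruction (identifying $\rho$ with the normal coordinate $k$, differentiating through the $C^1$ chart, and using that $\mathfrak{L}(\mathbf{n}_+)$ annihilates $T\partial M$) is exactly the content of those cited proofs, including the correct handling of non-reversibility.
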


Set
$S^\pm\partial M:=\{y\in SM|_{\partial M}: g_{\mathbf{n}_\pm}(\mathbf{n}_\pm,y)>0\}$.
By the Legendre transformations, one can show that $S^\pm\partial M$ are two submanifolds of $S\overline{M}$.
\begin{remark}In general, $\mathbf{n}_+\neq-\mathbf{n}_-$. However, it follows from the Legendre transformations that
$S^\pm\partial M=\{y\in SM|_{\partial M}: g_{\mathbf{n}_\mp}(\mathbf{n}_\mp,y)<0\}$.

\end{remark}

Set $\mathcal {Z}:=\{y\in S\partial M: \exists \, t>0 \text{ such that }\gamma_y((0,t))\subset M\}$. Define a function
${\mathfrak{t}}: SM\cup S^+\partial M\cup\mathcal {Z}\rightarrow \mathbb{R}_+$
by ${\mathfrak{t}}(y):=\sup \{t>0:\gamma_y(s)\in M, \,0<s<t\}$, which is called the ${\mathfrak{t}}$-function. By the same argument as in \cite[Lemma 5.4]{ZY}, one can show that ${\mathfrak{t}}$-function is low semi-continuous on $S M\cup S^+\partial M$.

Since $(M,\partial M,F)$ is compact, we can define a map
\[
\Psi:\{(t,y):\,y\in S^+\partial M,\, 0\leq t\leq {\mathfrak{t}}(y)\}\rightarrow SM, (t,y)\mapsto \varphi_t(y),
\]
where $\varphi_t$ is the geodesic flow of $F$.
Let $\widetilde{\mathfrak{t}}$ (resp. $\tilde{{i}}$) denote the $\mathfrak{t}$-function (resp. the cut value function) defined on $(M,\partial M,\widetilde{F})$, where $\widetilde{F}(y):=F(-y)$. Set
\[
{U}^-_M:=\{y\in S M: \tilde{\mathfrak{t}}(-y)< \tilde{{i}}(-y)\}.
\]
Since $y\in SM$ implies that $\widetilde{F}(-y)=1$, ${U}^-_M$ is well-defined. In particular, we have the following
\begin{lemma}\label{inject}
$\Psi|_{\mathfrak{N}_+}:{\mathfrak{N}_+}\rightarrow {U}^-_M\backslash U_\mathcal {Z}$ is a one-one map. Here, $\mathfrak{N}_+:=\{(t,y): y\in S^+\partial M, \, t\in (0,l(y))\}$, $U_\mathcal {Z}:=\{\varphi_t(y): y\in \mathcal {Z},\, t\in (0,l(y))\}$, and $l(y):=\min \{i(y),\mathfrak{t}(y)\}$.
\end{lemma}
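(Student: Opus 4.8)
The plan is to prove that $\Psi|_{\mathfrak N_+}$ is a bijection onto $U^-_M\setminus U_{\mathcal Z}$ by exhibiting its inverse explicitly: to $z\in U^-_M\setminus U_{\mathcal Z}$ I would associate the pair $(t,y)$ with $t:=\tilde{\mathfrak t}(-z)$ and $y:=-\dot{\widetilde\gamma}_{-z}(t)$, where $\widetilde\gamma_{-z}$ is the $\widetilde F$-geodesic with initial velocity $-z$; in words, one flows backward along the $\widetilde F$-geodesic determined by $-z$ until it first returns to $\partial M$, and records that time together with minus the arrival velocity. Two structural facts make this sensible: the reverse of an $F$-geodesic is an $\widetilde F$-geodesic (Section~2), and $d_F(p,q)=d_{\widetilde F}(q,p)$, so minimality and cut data transfer under reversal.

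First I would check that $\Psi(\mathfrak N_+)\subseteq U^-_M\setminus U_{\mathcal Z}$. Fix $(t,y)\in\mathfrak N_+$ and set $z:=\varphi_t(y)$; since $t<\mathfrak t(y)$ we have $\gamma_y(t)\in\operatorname{int}M$, so $z\in SM$. The curve $s\mapsto\gamma_y(t-s)$ is precisely $\widetilde\gamma_{-z}$; it lies in $\operatorname{int}M$ for $0<s<t$ and returns to $\partial M$ at the point $\pi(y)$ exactly when $s=t$, whence $\tilde{\mathfrak t}(-z)=t$. Because $t<i(y)$, the segment $\gamma_y|_{[0,t]}$ is $F$-minimizing, so its reverse $\widetilde\gamma_{-z}|_{[0,t]}$ is $\widetilde F$-minimizing and $\tilde i(-z)\ge t$; if $\tilde i(-z)=t$, then $\gamma_y(0)$ is a cut point of $-z$ along $\widetilde\gamma_{-z}$, so either it is conjugate to $\gamma_y(t)$ along that geodesic, or there are two distinct minimizing $\widetilde F$-geodesics from $\gamma_y(t)$ to $\gamma_y(0)$; invoking the reversal-invariance of conjugacy in the first case and reversing the extra geodesic in the second, both alternatives force $i(y)\le t$, contradicting $t<i(y)$. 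Hence $\tilde i(-z)>\tilde{\mathfrak t}(-z)$, i.e. $z\in U^-_M$. Furthermore, if $z$ equalled $\varphi_{t'}(w)$ for some $w\in\mathcal Z$ and $0<t'<l(w)$, then $\gamma_y$ and $\gamma_w$ would be the same geodesic up to a time-shift, so the common backward geodesic would lie in $\operatorname{int}M$ on $(0,t)$ and on $(0,t')$ yet meet $\partial M$ at parameters $t$ and $t'$; this forces $t=t'$ and then $y=w$, which is impossible since $y$ is strictly inward while $w$ is tangent to $\partial M$ (Lemma~\ref{in-in}). Thus $z\in U^-_M\setminus U_{\mathcal Z}$.

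Injectivity uses the same comparison of first-return times: if $\varphi_{t_1}(y_1)=\varphi_{t_2}(y_2)$ with $(t_i,y_i)\in\mathfrak N_+$, the two geodesics agree up to a shift, their common backward geodesic touches $\partial M$ at $t_1$ and $t_2$ while staying in $\operatorname{int}M$ strictly in between, forcing $t_1=t_2$, and matching velocities then gives $y_1=y_2$. For surjectivity, take $z\in U^-_M\setminus U_{\mathcal Z}$; then $\pi(z)\in\operatorname{int}M$ gives $t:=\tilde{\mathfrak t}(-z)>0$, while $z\in U^-_M$ gives $t<\tilde i(-z)$, hence $t<\infty$; by compactness of $\overline M$ the geodesic $\beta:=\widetilde\gamma_{-z}$ then satisfies $\beta((0,t))\subset\operatorname{int}M$ and $p:=\beta(t)\in\partial M$. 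Set $y:=-\dot\beta(t)$; then $F(y)=\widetilde F(\dot\beta(t))=1$, and reversing again $\gamma_y(s)=\beta(t-s)$, so $\varphi_t(y)=z$, the geodesic $\gamma_y$ enters $\operatorname{int}M$ for small $s>0$ (whence $\mathfrak t(y)>t$) and $g_{\mathbf{n}_+}(\mathbf{n}_+,y)\ge 0$. Since $t<\tilde i(-z)$, $\beta|_{[0,t]}$ is $\widetilde F$-minimizing, so $\gamma_y|_{[0,t]}$ is $F$-minimizing and $i(y)\ge t$; the same cut-point dichotomy as above rules out $i(y)=t$, so $i(y)>t$. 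Finally, if $g_{\mathbf{n}_+}(\mathbf{n}_+,y)=0$ then $y$ is tangent to $\partial M$, hence $y\in\mathcal Z$, and since $t<\min\{i(y),\mathfrak t(y)\}=l(y)$ this would put $z=\varphi_t(y)$ in $U_{\mathcal Z}$, a contradiction; so $g_{\mathbf{n}_+}(\mathbf{n}_+,y)>0$, i.e. $y\in S^+\partial M$, and $(t,y)\in\mathfrak N_+$ with $\Psi(t,y)=z$.

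I expect the main obstacle to be the cut-locus dichotomy used twice above: one must use that a cut point along a ($\widetilde F$- or $F$-)geodesic is either its first conjugate point or an endpoint of two distinct minimizers, and transfer this between $F$ and $\widetilde F$ via the reversal-invariance of conjugate points together with $d_F(p,q)=d_{\widetilde F}(q,p)$. A second, more bookkeeping-type difficulty is tracking which unit vectors along $\partial M$ are inward, outward, or tangent for $F$ as opposed to $\widetilde F$ — the inward normals $\mathbf{n}_+$ and $\widetilde{\mathbf{n}}_+$ need not be antipodal — which is what Lemma~\ref{in-in} is designed to control.
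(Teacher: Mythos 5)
Your proof is correct and takes essentially the same route as the paper: reverse the geodesic, use the first-return time $\tilde{\mathfrak{t}}(-z)$ to construct the inverse map, and rule out $\tilde{i}(-z)=\tilde{\mathfrak{t}}(-z)$ (resp.\ $i(y)=t$) via the conjugate-point/two-minimizers dichotomy transferred between $F$ and $\widetilde{F}$. You merely spell out the injectivity and the inclusion $\Psi(\mathfrak{N}_+)\subseteq U^-_M\setminus U_{\mathcal{Z}}$ in more detail than the paper does, but the underlying argument is identical.
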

\begin{proof}
Since $\overline{M}$ is compact, for each $y\in {U}^-_M$, $0<\tilde{\mathfrak{t}}(-y)< \tilde{{i}}(-y)<\infty$. Clearly, $\widetilde{\gamma}_{-y}(t)$, $0\leq t\leq \tilde{\mathfrak{t}}(-y)$ is a unit speed minimal geodesic in $(\overline{M},\widetilde{F})$. Set $Y:=-\dot{\widetilde{\gamma}}_{-y}(\tilde{\mathfrak{t}}(-y))$. Thus,
\[
F(Y)=\widetilde{F}(-Y)=\widetilde{F}(\dot{\widetilde{\gamma}}_{-y}(\tilde{\mathfrak{t}}(-y)))=1.
\]
It follows from Lemma \ref{in-in} that $g_{\mathbf{n}_+}(\mathbf{n}_+, Y)\geq 0$. Hence, $Y\in S^+\partial M\cup \mathcal {Z}$.

Let $d$ (resp. $\tilde{d}$) denote the distance function induced by $F$ (resp. $\widetilde{F}$). Let $p:=\pi(y)$ and $q:=\pi(Y)$.
Then $L_F(\gamma_Y([0,\tilde{\mathfrak{t}}(-y)]))=\tilde{\mathfrak{t}}(-y)=\tilde{d}(p,q)=d(q,p)$,
which implies that $i(Y)\geq \tilde{\mathfrak{t}}(-y)$. We claim that $i(Y)> \tilde{\mathfrak{t}}(-y)$. If not, then $p$ is the cut point of $q$ along $\gamma_Y$. If $p$ is also a conjugate point of $q$, then there exists a non-vanishing Jacobi field $J(t)$ along $\gamma_Y(t)$ such that $J(0)=0$ and $J(\tilde{\mathfrak{t}}(-y))=0$. It is easy to check that $\tilde{J}(t):=J(\tilde{\mathfrak{t}}(-y)-t)$ is a Jacobi field along $\widetilde{\gamma}_{-y}$ in $(\overline{M},\widetilde{F})$. Hence, $q$ is a conjugate point of $p$ along $\widetilde{\gamma}_{-y}$ in $(\overline{M},\widetilde{F})$, which contradicts $\tilde{\mathfrak{t}}(-y)<\tilde{i}(-y)$. Since $p$ is not a conjugate point of $q$, by the proof of \cite[Proposition 8.2.1]{BCS}, one can show that there exists another minimal geodesic from $q$ to $p$ in $(\overline{M},F)$. Thus, there exist two distinct minimal geodesic from $p$ to $q$ with the length $\tilde{\mathfrak{t}}(-y)$ in $(\overline{M},\widetilde{F})$, which also contradicts $\tilde{\mathfrak{t}}(-y)<\tilde{i}(-y)$. Hence, the claim is true, which implies that $\tilde{\mathfrak{t}}(-y)< \min\{\mathfrak{t}(Y),i(Y)\}=l(Y)$.

From above, we show that for each $y\in {U}^-_M$, there exist $Y\in S^+\partial M\cup \mathcal {Z}$ and $t:=\tilde{\mathfrak{t}}(-y)<l(Y)$ such that $y=\Psi(t,Y)$. Let $N_\mathcal {Z}:=\{(t,y): y\in \mathcal {Z}, t\in (0,l(y))\}$. Then $\Psi|_{\mathfrak{N}_+\cup N_\mathcal {Z}}:\mathfrak{N}_+\cup N_\mathcal {Z}\rightarrow {U}^-_M$ is subjective. Since $\Psi$ is injective, we are done by $\Psi(N_\mathcal {Z})=U_\mathcal {Z}$.
\end{proof}

Given any measure $d\mu$ on $M$, the induced volume forms on $\partial M$ by $\mathbf{n}_\pm$ are defined by $d\A_\pm:=i^*(\mathbf{n}_\pm\rfloor d\mu)$, where $i:\partial M\hookrightarrow M$ is the inclusion map (cf. \cite{Sh1}).
Now we have the following Santal\'o type formulas.
\begin{theorem}\label{fII}Let $(M,\partial M,F,d\mu)$ be a compact Finsler manifold with smooth boundary.
Thus,
for all integral function $f$ on $SM$, we have
\begin{align*}
&\int_{\mathcal {V}^-_M}f dV_{SM}=\int_{y\in S^+\partial M} e^{\tau(y)}g_{\mathbf{n}_+}(\mathbf{n}_+,y)d\chi_+(y)\int^{l(y)}_0f(\varphi_t(y))dt,\tag{1}\label{th1i}\\
&\int_{\mathcal {V}^+_M}f dV_{SM}=\int_{y\in S^-\partial M} e^{\tau(y)}g_{\mathbf{n}_-}(\mathbf{n}_-,y)d\chi_-(y)\int_0^{\tilde{l}(-y)}f(\varphi_{-t}(y))dt,\tag{2}\label{th1ii}
\end{align*}
where $\mathcal {V}^-_M:=\{y\in SM: \tilde{\mathfrak{t}}(-y)\leq  \tilde{{i}}(-y)\}$, $\mathcal {V}^+_M:=\{y\in SM: {\mathfrak{t}}(y)\leq  {{i}}(y)\}$ and $d\chi_\pm(y)=d\A_\pm(\pi(y))\wedge d\nu_{\pi(y)}(y)$.
\end{theorem}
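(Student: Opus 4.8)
\emph{Proof strategy.} The plan is to prove \eqref{th1i} by a change of variables along the geodesic flow, and then to deduce \eqref{th1ii} by applying \eqref{th1i} to the reverse metric $\widetilde{F}$. For \eqref{th1i}, I would first reduce the left-hand side to an integral over $\mathfrak{N}_+=\{(t,y):y\in S^+\partial M,\ 0<t<l(y)\}$. By Lemma~\ref{inject}, $\Psi|_{\mathfrak{N}_+}$ is a bijection onto $U^-_M\setminus U_{\mathcal{Z}}$; here $U_{\mathcal{Z}}$ is $dV_{SM}$-null because $\mathcal{Z}$ has dimension at most $2n-3$ (it sits inside the unit tangent bundle of $\partial M$), so its $\varphi_t$-flow-out has dimension at most $2n-2<\dim SM$, and $\mathcal{V}^-_M\setminus U^-_M\subset\{y\in SM:\widetilde{\mathfrak{t}}(-y)=\widetilde{i}(-y)\}$ is $dV_{SM}$-null by the cut-locus arguments of \cite{ZY} (this is precisely the point of using the closed condition ``$\leq$'' in the definition of $\mathcal{V}^-_M$). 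Hence $\int_{\mathcal{V}^-_M}f\,dV_{SM}=\int_{\Psi(\mathfrak{N}_+)}f\,dV_{SM}$. Moreover $\Psi$ restricts to a $C^1$-diffeomorphism of $\mathfrak{N}_+$ onto $\Psi(\mathfrak{N}_+)$: it is $C^1$ by Lemma~\ref{Fermi} and injective by Lemma~\ref{inject}, and its differential is everywhere nonsingular, since $d\Psi_{(t,y)}(\partial_t)=\mathbf{G}(\varphi_t(y))=d\varphi_t(\mathbf{G}(y))$, where $\mathbf{G}$ denotes the geodesic spray, while on $S^+\partial M$ one has $\mathbf{G}(y)\notin T_y(SM|_{\partial M})$ (by Lemma~\ref{in-in}, $d\pi(\mathbf{G}(y))=y$ has nonzero $\mathbf{n}_+$-component there) and $d\varphi_t$ is injective.

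The core of the argument is the Jacobian identity $\Psi^*(dV_{SM})=e^{\tau(y)}\,g_{\mathbf{n}_+}(\mathbf{n}_+,y)\,dt\wedge d\chi_+(y)$, which I would obtain in two steps. First, since the geodesic flow preserves $dV_{SM}$ (the Finslerian Liouville theorem; cf.~\cite{ZY}) and $d\varphi_t(\mathbf{G}(y))=\mathbf{G}(\varphi_t(y))$, for $V_1,\dots,V_{2n-2}\in T_y(S^+\partial M)$ we get $dV_{SM}|_{\varphi_t(y)}(\mathbf{G}(\varphi_t(y)),d\varphi_tV_1,\dots,d\varphi_tV_{2n-2})=dV_{SM}|_y(\mathbf{G}(y),V_1,\dots,V_{2n-2})$, so the Jacobian is independent of $t$ and it suffices to evaluate it at $t=0$, where $\Psi(0,\cdot)=\id$. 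Second, at $t=0$ I would use the horizontal–vertical splitting of $T_y(SM)$: choosing a basis $e_1,\dots,e_{n-1}$ of $T_{\pi(y)}\partial M$, lifting it horizontally to $V_1,\dots,V_{n-1}$, and picking a basis $W_1,\dots,W_{n-1}$ of the vertical space $T_y(S_{\pi(y)}M)$, the two factors of $dV_{SM}=e^\tau\,\pi^*d\mu\wedge d\nu$ separate, so that $\pi^*d\mu$ only sees the horizontal vectors $\mathbf{G}(y),V_1,\dots,V_{n-1}$, which project onto $y,e_1,\dots,e_{n-1}$; writing $y=g_{\mathbf{n}_+}(\mathbf{n}_+,y)\,\mathbf{n}_++(\text{a vector tangent to }\partial M)$ and using $d\A_+=i^*(\mathbf{n}_+\rfloor d\mu)$ yields $d\mu(y,e_1,\dots,e_{n-1})=g_{\mathbf{n}_+}(\mathbf{n}_+,y)\,d\A_+(e_1,\dots,e_{n-1})$, and since $d\chi_+=d\A_+\wedge d\nu$ the identity follows (the factor $g_{\mathbf{n}_+}(\mathbf{n}_+,y)$ is positive on $S^+\partial M$, so no absolute value is lost). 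The change of variables, followed by Fubini's theorem on $\mathfrak{N}_+$, then gives \eqref{th1i}.

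Finally, \eqref{th1ii} follows by applying the already-proven formula \eqref{th1i} to the compact Finsler manifold $(M,\partial M,\widetilde{F},d\mu)$ and the test function $z\mapsto f(-z)$, and then transporting the resulting identity under the antipodal map $z\mapsto -z$, which identifies $(SM,dV_{SM})$ with $(S\widetilde{M},dV_{S\widetilde{M}})$. Here one uses the elementary dictionary between $F$ and $\widetilde{F}$: $d\widetilde{\mu}=d\mu$, $\widetilde{\mathbf{n}}_\pm=-\mathbf{n}_\mp$, $\widetilde{\tau}(v)=\tau(-v)$, and the fact that the reverse of an $F$-geodesic is a $\widetilde{F}$-geodesic, so that $\widetilde{\varphi}_t(-y)=-\varphi_{-t}(y)$. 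Under these substitutions the set $S^+\partial M$ for $\widetilde{F}$ becomes $-S^-\partial M$, the set $\mathcal{V}^-_M$ for $\widetilde{F}$ becomes $\mathcal{V}^+_M$, one has $g^{\widetilde{F}}_{\widetilde{\mathbf{n}}_+}(\widetilde{\mathbf{n}}_+,-y)=g_{\mathbf{n}_-}(\mathbf{n}_-,y)$, and the functions $\widetilde{\mathfrak{t}},\widetilde{i},\widetilde{l}$ attached to the reverse metric are exactly the ones appearing in \eqref{th1ii}; a routine bookkeeping of these substitutions turns \eqref{th1i} into \eqref{th1ii}.

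I expect the main obstacle to be the Jacobian identity above — specifically, isolating the ``flux through $\partial M$'' factor $g_{\mathbf{n}_+}(\mathbf{n}_+,y)$ and carrying out the horizontal–vertical decomposition and the orientation bookkeeping correctly; reducing to the hypersurface $t=0$ via the invariance of $dV_{SM}$ under the geodesic flow is what makes this step manageable. The remaining delicate points are the two measure-zero claims in the reduction step and the circumstance that, because $F$ may be non-reversible, $\Psi$ is only $C^1$ (and not smooth) up to $\partial M$, so that the change-of-variables theorem must be invoked in its $C^1$ form.
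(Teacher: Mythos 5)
Your proposal is correct and follows essentially the same route as the paper: reduce to the flow-out map $\Psi$ on $\mathfrak{N}_+$ via Lemma~\ref{inject}, use the invariance of $dV_{SM}$ under the geodesic flow to evaluate the Jacobian at $t=0$, extract the flux factor $e^{\tau(y)}g_{\mathbf{n}_+}(\mathbf{n}_+,y)$ from $d\A_+=i^*(\mathbf{n}_+\rfloor d\mu)$ together with Lemma~\ref{in-in}, dispose of $U_{\mathcal{Z}}$ and $\{\tilde{\mathfrak{t}}(-y)=\tilde{i}(-y)\}$ as null sets, and obtain \eqref{th1ii} from \eqref{th1i} applied to $\widetilde{F}$ with the substitutions $\tilde{\mathbf{n}}_+=-\mathbf{n}_-$, $-\tilde{\varphi}_t(y)=\varphi_{-t}(-y)$. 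The only (presentational) difference is that you compute the Jacobian at $t=0$ by evaluating $dV_{SM}=e^{\tau}\pi^*d\mu\wedge d\nu$ on a horizontal--vertical adapted basis, whereas the paper works with forms modulo $dt$ and the splitting $d\mu=d\rho\wedge d\A_+$ from the co-area formula; these are the same calculation.
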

\begin{proof}
(1). Given any $y\in S^+\partial M$. We identify $T_y(S^+\partial M)$ with its image in $T_{(0,y)}(\mathbb{R}\times S^+\partial M
)$. Since $\Psi_{*(0,y)}(X)=X$, $\forall X\in
T_y(S^+\partial M)$, we have
\[
\Psi^*(d\chi_+(y))\equiv d{\chi_+}|_{(0,y)}\ \ (\text{mod } dt).\tag{3.1}\label{3.1}
\]

We claim that $[\Psi^*\pi^*d\rho]|_{(0,y)}\equiv 0\ (\text{mod }dt)$. In fact, for each $X\in
T_y(S^+\partial M)$, there exists a curve
$\xi:[0,+\varepsilon)\rightarrow S^+\partial M$ with
$\xi(0)=y$ and $\dot{\xi}(0)=X$.
Thus,
\[
\langle X, \Psi^*\pi^*d\rho\rangle|_{(0,y)}=\langle {\pi}_*\left(\Psi_{*(0,y)}X\right),d\rho\rangle=\langle
{\pi}_*X,d\rho\rangle=\left.\frac{d}{ds}\right|_{s=0}\rho(\pi(\xi(s)))=0.
\]
The claim is true. Lemma \ref{in-in} now yields
\begin{align*}
\left[\Psi^*\pi^*_1d\rho\right]|_{(0,y)}&=\left\langle\frac{\partial}{\partial
t},\Psi^*\pi^*_1d\rho\right\rangle_{(0,y)}
dt\\
&=\left(\left.\frac{d}{dt}\right|_{t=0^+}\rho\circ
\gamma_y(t)\right)dt=g_{\mathbf{n}_+}(\mathbf{n}_+,y)dt.\tag{3.2}\label{3.2}
\end{align*}

Define a function $\eta\in C^\infty(\mathbb{R}\times S^+\partial M)$ by $\Psi^*(dV_{SM})=\eta\cdot \beta$, where
$\beta|_{(t,y)}=dt\wedge d\chi_+(y)$ is a $(2n-1)$ form on $\mathbb{R}\times S^+\partial M$. It is easy to check that $\eta(t,y)=\eta(0,y)$ (cf. \cite[Lemma 5.6]{ZY}). By the co-area formula (see \cite[Theorem 3.3.1]{Sh1}), (\ref{3.1}) and
(\ref{3.2}), we have
\begin{align*}
[\eta dt\wedge d\chi_+]|_{(0,y)}&=\Psi^*(dV_{SM}(y))=\Psi^*[e^{\tau(
y)}\pi^*(d\mu)(y)\wedge d\nu_{\pi(y)}(y)]\\
&=\Psi^*[e^{\tau(
y)}\pi^*(d\rho\wedge d\A_+)(y)\wedge d\nu_{\pi(y)}(y)]\\
&=[e^{\tau( y)}g_{\mathbf{n}_+}(\mathbf{n}_+,y)dt\wedge d\chi_+]|_{(0,y)},
\end{align*}
that is, $\eta(0,y)=e^{\tau( y)}g_{\mathbf{n}_+}(\mathbf{n}_+,y)$. It
follows from the definition of $\eta$ that
\[
\Psi^*(dV_{SM}(\varphi_t(y)))=e^{\tau(y)}g_{\mathbf{n}_+}(\mathbf{n}_+,y)dt\wedge
d\chi,\tag{3.3}\label{3.3}
\]
which implies that $\Psi$ is of maximal rank. Hence, Lemma \ref{inject} yields that
$\Psi|_{\mathfrak{N}_+}$ is a diffeomorphism.

Let $\mathscr{N}:=\{y\in
SM:\tilde{\mathfrak{t}}(-y)=\tilde{i}({-y})\}$. Thus, $\mathcal
{V}_M^-={U}_M^-\cup \mathscr{N}$. By an argument similar
to the proof of Lemma \ref{inject}, one has
$\mathscr{N}\subset\{\varphi_{l(y)}y: y\in S^+\partial M\cup
\mathcal {Z},\,l(y)=i(y) \}$, which implies that $\mathscr{N}$ has
measure zero with respect to $dV_{SM}$. Also note that $V_{SM}(U^-_M\backslash\Psi(\mathfrak{N}_+))=V_{SM}(U_\mathcal {Z})=0$.
Hence, by (\ref{3.3}), we have
\begin{align*}
\int_{\mathcal {V}^-_M}fdV_{SM}&=\int_{U^-_M}fdV_{SM}\\
&=\int_{\Psi
(\mathfrak{N}_+)}fdV_{SM}=\int_{\mathfrak{N}_+}\Psi^*(fdV_{SM})\\
&=\int_{S^+\partial M}e^{\tau(y)}g_{\mathbf{n}_+}(\mathbf{n}_+,y)d\chi(y)\int^{l(y)}_0f(\varphi_t(y))dt.
\end{align*}

(2). By considering $(M,\partial M,\widetilde{F})$ and using the formula (1),  we have
\[
\int_{y\in\widetilde{\mathcal {V}^-_M}}f(-y) d\widetilde{V_{SM}}(y)=\int_{y\in \widetilde{S^+\partial M}} e^{\tilde{\tau}(y)}\tilde{g}_{\tilde{\mathbf{n}}_+}(\tilde{\mathbf{n}}_+,y)d\widetilde{\chi}_+(y)\int^{\tilde{l}(y)}_0f(-\tilde{\varphi}_t(y))dt,
\]
where the quantities $\tilde{*}$ denote the quantities $*$  defined by $\widetilde{F}$. Note that $\tilde{\mathbf{n}}_+=-{\mathbf{n}}_-$ and $-\tilde{\varphi}_t(y)=\varphi_{-t}(-y)$, $0\leq t\leq \tilde{l}(y)$. The formula (2) now follows from the transformation $y\mapsto -y$.\end{proof}

\section{A universal lower bound for the first eigenvalue of the nonlinear Laplacian}

\begin{definition}[\cite{GS,Sh3}]
Let $(M,F,d\mu)$ be a compact Finsler manifold. Denote
$\mathscr{H}_0(M,d\mu)$ by
\[
\mathscr{H}_0(M,d\mu):= \left \{
\begin{array}{lll}
&\{f\in W^1_2(M):\,\int_M fd\mu=0\}, &\partial M=\emptyset,\\
\\
&\{f\in W^1_2(M):\,f|_{\partial M}=0\},&\partial M\neq\emptyset.
\end{array}
\right.
\]

Define the canonical energy functional $E_{d\mu}$ on
$\mathscr{H}_0(M,d\mu)-\{0\}$ by
\[
E_{d\mu}(u):=\frac{\int_M F^*(du)^2d\mu}{\int_M u^2 d\mu}.
\]

$\lambda$ is an eigenvalue if there is a function
$u\in\mathscr{H}_0(M,d\mu)-\{0\}$ such that $d_u E_{d\mu}=0$ with
$\lambda=E_{d\mu}(u)$. In this case, $u$ is called an eigenfunction
corresponding to $\lambda$. The first eigenvalue
$\lambda_1(M,d\mu)$ is defined by
\[\lambda_1(M,d\mu):=\underset{u\in
\mathscr{H}_0(M,d\mu)-\{0\} }{\inf}E_{d\mu}(u),
\]
which is the smallest positive critical value of $E_{d\mu}$.
\end{definition}
\begin{remark}
$u$ is an eigenfunction
corresponding to $\lambda$ if and only if
\[
\Delta_{d\mu} u +\lambda u=0 \text{ (in the weak sense)},
\]
where $\Delta_{d\mu}$ is the nonlinear Laplacian introduced by Shen \cite{GS,Sh1,Sh3}. It should be noted that $\Delta_{d\mu}$ is dependent on the choice of $d\mu$.
\end{remark}

\begin{proposition}\label{gradestimate}Let $(M,F)$ be a Finsler $n$-manifold. Then for any $p\in M$ and
$f\in C^\infty(M)$, we have
\[
F^*(df|_{p})^2\geq \frac{n}{c_{n-1}\Lambda^{n+1}_F(p)}\int_{S_pM}\langle y,df\rangle^2 d\nu_p(y),\tag{4.1}\label{5.**}
\]
with equality if and only if $F(p,\cdot)$ is a Eucildean norm.
\end{proposition}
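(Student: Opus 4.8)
The plan is to reduce inequality (\ref{5.**}) to a single statement about the Minkowski norm $F_p:=F(p,\cdot)$ on the vector space $V:=T_pM$, and then to control the resulting integral over the indicatrix $S_pM$ by a polar-coordinate computation governed by $\Lambda_F(p)$. We may assume $df|_p\ne 0$, as otherwise both sides vanish. Set $v:=\nabla f|_p/F(\nabla f|_p)$, so that $F_p(v)=1$. Since $\nabla f=\mathfrak{L}^{-1}(df)$ and $F^*\circ\mathfrak{L}=F$, we have $F^*(df|_p)=F(\nabla f|_p)$; moreover, because the fundamental tensor is $0$-homogeneous, $\langle y,df\rangle=df(y)=g_{\nabla f}(\nabla f,y)=F^*(df|_p)\,g_v(v,y)$ for each $y\in S_pM$. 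Thus (\ref{5.**}) becomes equivalent to the $f$-free inequality
\[
\int_{S_pM}g_v(v,y)^2\,d\nu_p(y)\ \leq\ \frac{c_{n-1}}{n}\,\Lambda_F^{n+1}(p),
\]
and it suffices to prove this, with equality exactly when $F_p$ is Euclidean.

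First I would transfer the integral from the indicatrix $S_pM$ to the unit ball $B:=\{F_p\le 1\}$. Writing $dV_p:=\sqrt{\det g_{ij}(p,\cdot)}\,dy^1\wedge\cdots\wedge dy^n$ on $V$, and using that $\sqrt{\det g_{ij}(p,\cdot)}$ is $0$-homogeneous while $d\nu_p$ is precisely its contraction with the radial field restricted to $S_pM$, one has the polar decomposition $\int_B\phi\,dV_p=\int_0^1 r^{n-1}\left(\int_{S_pM}\phi(ry)\,d\nu_p(y)\right)dr$. Applied to the $2$-homogeneous integrand $\phi(y)=g_v(v,y)^2$ this gives $\int_{S_pM}g_v(v,y)^2\,d\nu_p(y)=(n+2)\int_{B}g_v(v,y)^2\,dV_p(y)$, so it remains to bound the ball integral by $\Lambda_F^{n+1}(p)\,c_{n-1}/(n(n+2))$.

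Next I would bring in the uniformity constant. From the definition of $\Lambda_F(p)$ and the Euler identity $g_Y(Y,Y)=F_p(Y)^2$ one reads off, for every $Y\in V\setminus\{0\}$, the two comparisons $g_v(Y,Y)\le\Lambda_F(p)F_p(Y)^2$ and $\det g_{ij}(p,Y)\le\Lambda_F^{n}(p)\det g_{ij}(p,v)$. Now choose linear coordinates $(z^1,\dots,z^n)$ on $V$ that are $g_v$-orthonormal with $v$ as first basis vector; then $g_v(v,y)=z^1$, $g_v(Y,Y)=|z|^2$, the Jacobian identity $|dy^1\wedge\cdots\wedge dy^n|=(\det g_{ij}(p,v))^{-1/2}|dz^1\wedge\cdots\wedge dz^n|$ together with the determinant bound yields $dV_p\le\Lambda_F^{n/2}(p)\,dz^1\cdots dz^n$, and the first comparison shows $B\subseteq\Lambda_F^{1/2}(p)\,\mathbb{B}^n$. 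Consequently
\[
\int_{B}g_v(v,y)^2\,dV_p(y)\ \le\ \Lambda_F^{n/2}(p)\int_{\Lambda_F^{1/2}(p)\mathbb{B}^n}(z^1)^2\,dz\ =\ \Lambda_F^{n+1}(p)\int_{\mathbb{B}^n}(z^1)^2\,dz\ =\ \Lambda_F^{n+1}(p)\,\frac{c_{n-1}}{n(n+2)},
\]
where the last equality uses $\int_{\mathbb{B}^n}(z^1)^2\,dz=\tfrac1n\int_{\mathbb{B}^n}|z|^2\,dz=\tfrac{c_{n-1}}{n(n+2)}$. Combining with the previous paragraph proves the inequality.

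Finally, for the equality case: if $F_p$ is Euclidean then $\Lambda_F(p)=1$, $g_v$ does not depend on $v$, $B$ is the $g_v$-unit ball, $d\nu_p$ is the round measure on $\mathbb{S}^{n-1}$, and every inequality above is an equality; conversely, equality in (\ref{5.**}) forces equality in $dV_p\le\Lambda_F^{n/2}(p)\,dz$ at almost every point of $B$, hence (by continuity) at $y=v$, which gives $\Lambda_F(p)=1$, i.e. $F_p$ is Euclidean. The step I expect to demand the most care is the bookkeeping in the third paragraph — matching the intrinsic sphere measure $d\nu_p$ with the Euclidean computation via the polar decomposition and the Jacobian factor $(\det g_{ij}(p,v))^{-1/2}$, and keeping the direction of each uniformity inequality consistent; everything else is elementary.
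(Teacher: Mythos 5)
Your proposal is correct and follows essentially the same route as the paper: normalize by $F^*(df)=F(\nabla f)$, pass to $g_{\nabla f}$-orthonormal coordinates, convert the indicatrix integral to a ball integral via Stokes/polar decomposition (picking up the factor $n+2$), and bound both $\sqrt{\det g_{ij}}$ by $\Lambda_F^{n/2}(p)$ and $B_pM$ by $\mathbb{B}^n(\sqrt{\Lambda_F(p)})$ to reach the constant $\tfrac{c_{n-1}}{n}\Lambda_F^{n+1}(p)$. The only (immaterial) difference is in the equality case, where you extract $\Lambda_F(p)=1$ from equality in the determinant bound while the paper extracts it from the ball inclusion; both are valid.
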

\begin{proof}
Without loss of generality, we may suppose $df|_p\neq0$. Set $B_pM:=\{y\in T_pM: F(p,y)<1\}$. By \cite{W}, one can choose a $g_{\nabla f}$-orthnormal basis $\{e_i\}$ of $T_pM$ such that $e_n=\nabla f/ F(\nabla f)$ and $\deg g_{ij}(p,y)\leq \Lambda_F^n(p)$. Let $\{y^i\}$ denote the corresponding coordinates. By Stokes' formula, we have
\begin{align*}
&\int_{S_pM}\langle y,df\rangle^2 d\nu_p(y)\\
\leq &\Lambda_F^{\frac{n}{2}}(p) F^2(\nabla f)\int_{S_pM}\left({y^n}\right)^2\overset{n}{\sum_{k=1}}(-1)^{k-1}y^k dy^1\wedge \cdots \wedge \widehat{dy^k}\wedge \cdots \wedge dy^n\\
=&(n+2) \Lambda_F^{\frac{n}{2}}(p) F^2(\nabla f)\int_{B_pM} (y^n)^2dy^1\wedge \cdots \wedge dy^n\\
\leq &(n+2) \Lambda_F^{\frac{n}{2}}(p) F^2(\nabla f)\int_{\mathbb{B}^n(\sqrt{\Lambda_F(p)})} (y^n)^2dy^1\wedge \cdots \wedge dy^n\tag{4.2}\label{5.1}\\
=&\frac{c_{n-1}}{n}\Lambda_F^{n+1}(p)F^2(\nabla f)
\end{align*}
If equality holds in (\ref{5.**}), then it follows from (4.2) that $B_pM=\mathbb{B}^n(\sqrt{\Lambda_F(p)})$. Namely, $F(y)=1$ if and only if $g_{\nabla f}(y,y)=\Lambda_F(p)$. In particular,
$1=F(e_n)=g_{\nabla f}(e_n,e_n)=\Lambda_F(p)$, which implies that $F(p,\cdot)$ is a Eucildean norm.\end{proof}

\begin{theorem}\label{firsteig}
Let $(M,\partial M,F)$ be a compact Finsler $n$-manifold with smooth boundary such that every geodesic ray in $(M,{F})$ minimizes distance up to the point that it intersects $\partial M$.  Then
\[
\lambda_1(M,d\mu)\geq\left \{
\begin{array}{lll}
&\frac{\lambda_1(\mathbb{S}_D^+)}{\Lambda_F^{4n+1}},& d\mu=d\mu_{BH},\\
\\\tag{4.3}\label{inequ}
&\frac{\lambda_1(\mathbb{S}_D^+)}{\Lambda_F^{2n+1}},& d\mu=d\mu_{HT},
\end{array}
\right.
\]
where $D:=\diam(M)$ and $\mathbb{S}_D^+$ denotes the $n$-dimensional Riemannian hemisphere of the constant sectional curvature sphere having diameter equal to $D$. The equality holds if and only if $(M,F)$ is isometric to $\mathbb{S}_D^+$.
\end{theorem}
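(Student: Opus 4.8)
The plan is to carry out Croke's argument \cite{C2,Cr} in the Finsler setting, using the Santal\'o formula of Theorem \ref{fII} in place of its Riemannian ancestor and Proposition \ref{gradestimate} to trade the gradient for a spherical average. Fix a first eigenfunction $u$, so that $u\in\mathscr{H}_0(M,d\mu)$, $u|_{\partial M}=0$, and $\lambda_1(M,d\mu)=\int_M F^*(du)^2\,d\mu\big/\int_M u^2\,d\mu$. Since a first eigenfunction of $\Delta_{d\mu}$ is $C^{1,\alpha}_{\mathrm{loc}}$, smooth on $\{\nabla u\neq0\}$, and has critical set of zero $d\mu$--measure, the pointwise estimate of Proposition \ref{gradestimate} may be applied to $u$ on $\{\nabla u\neq0\}$, with $\{\nabla u=0\}$ contributing to no integral below.

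First, integrating Proposition \ref{gradestimate} over $M$ and using $dV_{SM}=e^{\tau}\,\pi^{*}(d\mu)\wedge d\nu_{x}$ together with the distortion bound $\Lambda_F^{-n}\le e^{\tau}\le\Lambda_F^{n}$, one gets
\[
\int_M F^*(du)^2\,d\mu\ \ge\ \frac{n}{c_{n-1}\Lambda_F^{2n+1}}\int_{SM}\langle y,du\rangle^2\,dV_{SM}.
\]
For the denominator, $\int_{SM}(u^2\circ\pi)\,dV_{SM}=\int_M u^2\big(\int_{S_xM}e^{\tau}\,d\nu_x\big)\,d\mu$, where the inner integral equals $c_{n-1}$ \emph{exactly} when $d\mu=d\mu_{HT}$ but only satisfies $c_{n-1}\Lambda_F^{-n}\le\int_{S_xM}e^{\tau}\,d\nu_x\le c_{n-1}\Lambda_F^{n}$ when $d\mu=d\mu_{BH}$; this asymmetry is exactly what separates the exponents $2n+1$ and $4n+1$ in the statement.

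Next, apply Theorem \ref{fII}. The hypothesis that every geodesic ray of $F$ minimizes up to $\partial M$ --- which passes to $\widetilde F$, since the reverse of such a ray is again one --- gives $\mathfrak t(y)\le i(y)$ and $\tilde{\mathfrak t}(-y)\le\tilde i(-y)$ for all $y$, so $\mathcal V^{\pm}_M=SM$ and $l(y)=\mathfrak t(y)$. With the nonnegative weight $w(y):=e^{\tau(y)}g_{\mathbf{n}_+}(\mathbf{n}_+,y)$ and $h_y:=u\circ\gamma_y$, for which $h_y'(t)=\langle\varphi_t(y),du\rangle$, formula (1) of Theorem \ref{fII} applied to $f=\langle y,du\rangle^2$ and to $f=u^2\circ\pi$ yields
\[
\int_{SM}\langle y,du\rangle^2\,dV_{SM}=\int_{S^+\partial M}w(y)\,d\chi_+(y)\int_0^{\mathfrak t(y)}h_y'(t)^2\,dt,
\]
\[
\int_{SM}(u^2\circ\pi)\,dV_{SM}=\int_{S^+\partial M}w(y)\,d\chi_+(y)\int_0^{\mathfrak t(y)}h_y(t)^2\,dt.
\]
For $y\in S^+\partial M$ the chord $\gamma_y|_{[0,\mathfrak t(y)]}$ joins $\partial M$ to $\partial M$, so $h_y$ vanishes at both endpoints, and it is minimizing, so $\mathfrak t(y)=d\big(\gamma_y(0),\gamma_y(\mathfrak t(y))\big)\le D=\diam(M)$. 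The sharp one--dimensional Dirichlet inequality $\int_0^L(h')^2\ge(\pi/L)^2\int_0^L h^2$ with $L=\mathfrak t(y)\le D$, integrated against $w\,d\chi_+\ge0$, gives $\int_{SM}\langle y,du\rangle^2\,dV_{SM}\ge(\pi^2/D^2)\int_{SM}(u^2\circ\pi)\,dV_{SM}$. Combining the estimates above and using $\lambda_1(\mathbb S_D^+)=n\pi^2/D^2$ --- the first Dirichlet eigenvalue of the hemisphere of radius $D/\pi$ --- produces the asserted bound, the precise powers of $\Lambda_F$ coming from the accounting in the second paragraph.

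Finally, for the equality case run the chain in reverse. Equality forces equality in Proposition \ref{gradestimate} at every point of $\{\nabla u\neq0\}$, hence $F_p$ is a Euclidean norm on a dense set and $F$ is Riemannian; then $\Lambda_F=1$, $d\mu$ is the Riemannian volume, and the distortion comparisons are automatically sharp. Equality in the one--dimensional inequality for $w\,d\chi_+$--a.e.\ $y$ forces $\mathfrak t(y)\equiv D$ and $h_y(t)=a(y)\sin(\pi t/D)$. Thus $(M,F)$ is a Riemannian manifold in which every boundary chord is minimizing of length $D$ and the first eigenfunction is sinusoidal along each chord --- precisely Croke's rigidity configuration \cite{Cr,C2} --- and invoking it identifies $(M,F)$ with $\mathbb S_D^+$; the reverse implication is immediate. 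I expect this rigidity step, upgrading ``all chords have length $D$ and $u$ is sinusoidal along each'' to ``$M\cong\mathbb S_D^+$'', to be the genuine difficulty, together with the regularity bookkeeping (in particular, that the critical set of the first eigenfunction is negligible) needed to apply the smooth Proposition \ref{gradestimate} to the eigenfunction.
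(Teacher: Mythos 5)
Your proposal follows essentially the same route as the paper's proof: Proposition \ref{gradestimate} to replace $F^*(du)^2$ by a spherical average, the Santal\'o formula of Theorem \ref{fII} to disintegrate along boundary chords, the one-dimensional Dirichlet (Wirtinger) inequality on each chord of length $\le D$, and the distortion bounds to convert between $d\mu$ and $dV_{SM}$, with the equality case reduced to $\Lambda_F=1$ and then to Croke's Riemannian rigidity exactly as in the paper. Two small points. First, an internal inconsistency in your accounting: for $d\mu=d\mu_{BH}$ the lower bound that Lemma 2.1 actually yields (and that the paper records as its inequality (4.4)) is $\int_{S_xM}e^{\tau}\,d\nu_x\ge c_{n-1}\Lambda_F^{-2n}$, not $c_{n-1}\Lambda_F^{-n}$ as you wrote; your claimed exponent $4n+1$ (versus $2n+1$) requires the factor $\Lambda_F^{-2n}$, so as written your chain would produce $3n+1$ --- the fix is immediate but should be made. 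Second, the paper uses formula (2) of Theorem \ref{fII}, for which $\mathcal{V}^+_M=SM$ is the hypothesis verbatim, whereas your use of formula (1) requires $\mathcal{V}^-_M=SM$ and hence the observation that the minimizing hypothesis passes to $\widetilde{F}$; your justification of this is correct, but it is an extra step the paper's choice avoids.
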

\begin{proof}Lemma 2.1 yields that
\[
\int_{S_pM}e^{\tau(y)}d\nu_p(y)=c_{n-1}\frac{\sigma_{HT}(p)}{\sigma(p)}\geq\left \{
\begin{array}{lll}
&\frac{c_{n-1}}{\Lambda_F^{2n}},& d\mu=d\mu_{BH},\\
\\ \tag{4.4}\label{4.3}
&c_{n-1},& d\mu=d\mu_{HT}.
\end{array}
\right.
\]

Since $\mathcal {V}^+_M=SM$, Theorem \ref{fII} together with Proposition \ref{gradestimate} and (\ref{4.3}) then yields
\begin{align*}
&\int_M F^{*2}(df)d\mu\\
\geq &\frac{n}{c_{n-1}\Lambda_F^{n+1}}\int_M d\mu(p)\int_{S_pM}\langle y,df\rangle^2 d\nu_p(y)\tag{4.5}\label{5.2}\\
=&\frac{n}{c_{n-1}\Lambda_F^{n+1}}\int_{SM}e^{-\tau(y)}\langle y,df\rangle^2 dV_{SM}(y)\\
=&\frac{n}{c_{n-1}\Lambda_F^{n+1}}\int_{y\in S^-\partial M}e^{\tau(y)}g_{\mathbf{n}_-}(\mathbf{n}_-,y)d\chi_-(y)\int_{-\tilde{l}(-y)}^0 e^{-\tau(\varphi_t(y))}\langle \varphi_t(y),df\rangle^2 dt\\
\geq & \frac{n}{c_{n-1}\Lambda_F^{2n+1}}\int_{y\in S^-\partial M}e^{\tau(y)}g_{\mathbf{n}_-}(\mathbf{n}_-,y)d\chi_-(y)\int_{-\tilde{l}(-y)}^0\left(\frac{d}{dt}f(\gamma_y(t))\right)^2 dt\\
\geq &\frac{n}{c_{n-1}\Lambda_F^{2n+1}}\int_{y\in S^-\partial M}e^{\tau(y)}g_{\mathbf{n}_-}(\mathbf{n}_-,y)d\chi_-(y)\int_{-\tilde{l}(-y)}^0\left(\frac{\pi}{\tilde{l}(-y)}\right)^2f^2(\gamma_y(t))dt\\
\geq &\frac{n}{c_{n-1}\Lambda_F^{2n+1}}\left(\frac{\pi}{D}\right)^2\int_{SM}f^2(\pi(y))dV_{SM}(y)\\
\geq &\left \{
\begin{array}{lll}
&\frac{\lambda_1(\mathbb{S}^+_D)}{\Lambda_F^{4n+1}}\int_Mf^2d\mu,& d\mu=d\mu_{BH},\\
\\
&\frac{\lambda_1(\mathbb{S}^+_D)}{\Lambda_F^{2n+1}}\int_Mf^2d\mu,& d\mu=d\mu_{HT}.
\end{array}
\right.
\end{align*}
If we have equality in (\ref{inequ}), then (\ref{5.2}) together with Proposition \ref{gradestimate} implies $\Lambda_F=1$. Hence, $(M,F)$ is a Riemannian manifold and $\lambda_1(M)=\lambda_1(\mathbb{S}^+_D)$. By the standard argument (see \cite[p.131]{C2} or \cite{Cr}), one can show that $(M,F)$ is isometric to $\mathbb{S}^+_D$.
\end{proof}

In \cite{Sh3}, Shen shows that the first eigenvalue of a forward metric ball is bounded from above by a constant depending only on the dimension and lower bounds on the Ricci curvature and the S-curvature. From Theorem \ref{firsteig}, we obtain a lower bound for the first eigenvalue of a forward metric ball.

\begin{corollary}Let $(M,F,d\mu)$ be a forward complete Finsler $n$-manifold of injectivity radius $\mathfrak{i}_M$.
For any $0<r<\mathfrak{i}_M/(1+\sqrt{\Lambda_F})$ and any $p\in M$, we have
\[
\lambda_1(B^+_p(r))\geq\left \{
\begin{array}{lll}
&\frac{\lambda_1\left(\mathbb{S}^+_{2\sqrt{\Lambda_F}r}\right)}{\Lambda_F^{4n+1}},& d\mu=d\mu_{BH},\\
\\
&\frac{\lambda_1\left(\mathbb{S}^+_{2\sqrt{\Lambda_F}r}\right)}{\Lambda_F^{2n+1}},& d\mu=d\mu_{HT}.
\end{array}
\right.
\]
with equality if and only if $B^+_p(r)$ is isometric to $\mathbb{S}^+_{2r}$.
\end{corollary}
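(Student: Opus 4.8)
The plan is to deduce this corollary from Theorem \ref{firsteig} by applying it to the closed forward ball $M':=\overline{B^+_p(r)}$, equipped with the restricted metric $F':=F|_{M'}$, and then translating the geometric quantities of $M'$ into the quantities $2\sqrt{\Lambda_F}\,r$ and $\Lambda_F$ appearing in the statement. First I would check that $M'$ is an admissible object: forward completeness and the forward Hopf--Rinow theorem \cite{BCS} give compactness, while $r<\mathfrak{i}_M$ guarantees that $\partial M'=\{x:d(p,x)=r\}=\Exp_p\{v\in T_pM: F(p,v)=r\}$ is a smooth embedded hypersurface, so $M'$ is a compact Finsler $n$-manifold with smooth boundary. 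I would also record that $\Lambda_{F'}\le\Lambda_F$, that the restriction of the Busemann--Hausdorff (resp.\ Holmes--Thompson) measure of $F$ is the corresponding measure of $F'$, and that $\mathscr{H}_0(M',d\mu)$ is exactly the Dirichlet space of the open ball, so that $\lambda_1(B^+_p(r))=\lambda_1(M',d\mu)$.

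The crucial step, which I expect to be the main obstacle, is to verify the hypothesis of Theorem \ref{firsteig} for $(M',\partial M',F')$: every geodesic ray in $(M',F')$ minimizes distance up to the point where it meets $\partial M'$; equivalently, $\mathfrak{t}(y)\le i(y)$ for every $y\in SM'$ (i.e.\ $\mathcal{V}^+_{M'}=SM'$). I would argue by contradiction. If some $y\in SM'$ had $0<i(y)<\mathfrak{t}(y)$, then, writing $q:=\pi(y)$, the segment $\gamma_y|_{[0,i(y)]}$ is minimizing while $\gamma_y(i(y))$ still lies in the open ball $B^+_p(r)$, so
\[
\mathfrak{i}_M\le i(y)=d(q,\gamma_y(i(y)))\le d(q,p)+d(p,\gamma_y(i(y)))<\lambda_F\,d(p,q)+r\le\sqrt{\Lambda_F}\,r+r=(1+\sqrt{\Lambda_F})\,r<\mathfrak{i}_M ,
\]
where I have used $i(y)\ge\mathfrak{i}_M$, the triangle inequality, the reversibility estimate $d(q,p)\le\lambda_F\,d(p,q)$ with $d(p,q)\le r$, the inequality $\lambda_F^2\le\Lambda_F$, and finally the hypothesis $r<\mathfrak{i}_M/(1+\sqrt{\Lambda_F})$. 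This contradiction proves the claim; the boundary cases ($y$ based on $\partial M'$ and pointing outward, where $\mathfrak{t}(y)=0$) are trivial or handled identically. The delicate point here is the careful use of the non-symmetry of the Finsler distance through $\lambda_F$, and this is exactly where the quantitative hypothesis on $r$ is consumed.

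Next I would bound the diameter: for $x_1,x_2\in M'$, routing through $p$ gives $d(x_1,x_2)\le d(x_1,p)+d(p,x_2)\le\lambda_F\,d(p,x_1)+d(p,x_2)\le\sqrt{\Lambda_F}\,r+r\le 2\sqrt{\Lambda_F}\,r$, the last step using $\Lambda_F\ge1$; hence $D':=\diam(M')\le 2\sqrt{\Lambda_F}\,r$. Theorem \ref{firsteig} applied to $M'$ then gives $\lambda_1(M',d\mu)\ge\lambda_1(\mathbb{S}^+_{D'})/\Lambda_{F'}^{4n+1}$ (resp.\ with exponent $2n+1$ in the Holmes--Thompson case), and since $D\mapsto\lambda_1(\mathbb{S}^+_D)=n\pi^2/D^2$ is decreasing, combining with $D'\le 2\sqrt{\Lambda_F}\,r$ and $\Lambda_{F'}\le\Lambda_F$ yields the asserted inequality.

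Finally, for the equality case I would run the chain backwards. Equality in the corollary forces equality in each intermediate step: the equality statement of Theorem \ref{firsteig} for $M'$ gives $\Lambda_{F'}=1$ and $M'$ isometric to $\mathbb{S}^+_{D'}$, while equality in the monotonicity step, using $\lambda_1(\mathbb{S}^+_D)=n\pi^2/D^2$ and $D'\le 2\sqrt{\Lambda_F}\,r$, forces $\Lambda_F=1$ and $D'=2r$. Hence $B^+_p(r)$ is isometric to the Riemannian hemisphere $\mathbb{S}^+_{2r}$ (this can also be read off from Croke's classical equality case \cite{C2,Cr}); the converse is immediate. A minor bookkeeping difficulty in this last paragraph is keeping track of the precise exponents, but no new idea is needed.
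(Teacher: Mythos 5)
Your argument is correct and is exactly the derivation the paper intends (the paper states the corollary immediately after Theorem \ref{firsteig} without writing out a proof): apply Theorem \ref{firsteig} to the compact ball $\overline{B^+_p(r)}$, using the injectivity-radius hypothesis together with $d(q,p)\le\lambda_F d(p,q)\le\sqrt{\Lambda_F}\,r$ to verify that geodesics minimize up to the boundary and to bound the diameter by $(1+\sqrt{\Lambda_F})r\le 2\sqrt{\Lambda_F}\,r$. Your verification of the minimizing-up-to-the-boundary hypothesis is precisely the point the paper leaves implicit, and it is carried out correctly.
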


\section{Croke type isoperimetric inequalities }
In this section, we shall establish Theorem \ref{t3} and give some applications.

\begin{lemma}\label{es1}
For each $x\in \partial M$, we have
\[
\int_{S^\sharp_x\partial M}g_{\mathbf{n}_\sharp}(\mathbf{n}_\sharp,y)e^{\tau(y)}d\nu_x(y)\leq \frac{c_{n-2}}{n-1}\Lambda_F^{2n+\frac12}(x),
\]
with equality if and only if $F(x,\cdot)$ is a Euclidean norm.
Here, "$\sharp$" denotes either "$+$" or "$-$", and $S^\sharp_x\partial M:=\{y\in S_xM:\,g_{\mathbf{n}_\sharp}(\mathbf{n}_\sharp,y)>0\}$.
\end{lemma}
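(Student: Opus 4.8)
The plan is to reduce the inequality to a Euclidean integral-geometric fact in the tangent space $T_x\partial M \subset T_xM$ and then control the anisotropy of $F(x,\cdot)$ by $\Lambda_F(x)$. First I would fix $x\in\partial M$ and write $N:=\mathbf{n}_\sharp(x)$, and choose a $g_N$-orthonormal basis $\{e_1,\dots,e_n\}$ of $T_xM$ with $e_n=N$; as in Proposition~\ref{gradestimate} this basis can be taken so that $\det g_{ij}(x,y)\le \Lambda_F^n(x)$ for all $y\in S_xM$. Writing $y=\sum y^i e_i$ and using $e^{\tau(y)}=\sqrt{\det g_{ij}(x,y)}/\sigma(x)$, the integral $\int_{S^\sharp_x\partial M} g_N(N,y)\,e^{\tau(y)}\,d\nu_x(y)$ becomes, after inserting the coordinate expression for $d\nu_x$, an integral of $g_N(N,y)\cdot\sqrt{\det g_{ij}}\cdot(\text{something})/\sigma(x)$; the key point is that $g_N(N,y)=y^n$ in this basis. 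This lets me bound $g_N(N,y)\le \Lambda_F^{1/2}(x)$ on $S_x^\sharp\partial M$ when convenient, and more importantly identify $S_x^\sharp\partial M$ with the ``upper half'' $\{y^n>0\}$ of the indicatrix.

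Next I would convert the sphere integral into a solid-region integral by Stokes'/the divergence theorem, exactly as in the proof of Proposition~\ref{gradestimate}: an integral over $S_xM$ of the form $\int_{S_xM} h(y)\sum_k(-1)^{k-1}y^k\,dy^1\wedge\cdots\widehat{dy^k}\cdots\wedge dy^n$ with $h$ homogeneous of appropriate degree equals $\int_{B_xM}(\text{divergence})\,dy$ over the unit ball $B_xM=\{F(x,\cdot)<1\}$. Applying this with the integrand carrying the factor $y^n$ (from $g_N(N,y)$) together with the Jacobian factor, the exponent bookkeeping should produce $\int_{B_xM} y^n\,dy$ or a closely related monomial integral. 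Then I would use the $g_N$-comparison $B_xM\subset \{y: g_N(y,y)<\Lambda_F(x)\}=\mathbb{B}^n(\sqrt{\Lambda_F(x)})$ and the fact that on $S_x^\sharp\partial M$ we restrict to $y^n>0$, so the integral is at most $\int_{\mathbb{B}^n_+(\sqrt{\Lambda_F(x)})} y^n\,dy$, a half-ball integral computed explicitly: $\int_{\mathbb{B}^n_+(R)} y^n\,dy = \tfrac{c_{n-2}}{\,2(n-1)\,}\,R^{n+1}$ (up to the precise normalization constant). Collecting the powers of $\Lambda_F(x)$ — one half-power from bounding $g_N(N,y)\le\Lambda_F^{1/2}$, $n/2$ from $\sqrt{\det g_{ij}}$, and $n/2+\tfrac12$ from enlarging $B_xM$ to the ball of radius $\sqrt{\Lambda_F(x)}$, with an additional accounting of the $\sigma(x)$ normalization via Lemma~2.1 — should total $\Lambda_F^{2n+1/2}(x)$, matching the claimed bound $\frac{c_{n-2}}{n-1}\Lambda_F^{2n+\frac12}(x)$.

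For the equality case I would trace back each inequality: equality forces $g_N(N,y)=\Lambda_F^{1/2}(x)$ wherever it matters, $\det g_{ij}(x,y)=\Lambda_F^n(x)$ identically, and $B_xM=\mathbb{B}^n(\sqrt{\Lambda_F(x)})$; the last of these already says $F(x,\cdot)^2=\Lambda_F(x)^{-1}g_N(\cdot,\cdot)$ is (a constant multiple of) a Euclidean norm, and then as in Proposition~\ref{gradestimate} the normalization $F(e_n)=1$ forces $\Lambda_F(x)=1$, so $F(x,\cdot)$ is Euclidean; conversely if $F(x,\cdot)$ is Euclidean all the comparisons are equalities. The treatment is uniform in ``$\sharp$'' since the only role of the sign is to select which half $\{g_{\mathbf n_\sharp}(\mathbf n_\sharp,y)>0\}$ of the indicatrix we integrate over, and by symmetry of the half-ball integral this gives the same bound for $+$ and $-$.

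The main obstacle I anticipate is the exponent bookkeeping: getting the power of $\Lambda_F$ exactly right requires care about (i) whether $g_{\mathbf n_\sharp}(\mathbf n_\sharp,y)$ is best bounded pointwise by $\Lambda_F^{1/2}$ or absorbed into the region-enlargement step, (ii) the precise homogeneity degree of the form to which Stokes is applied, which determines the dimensional constant and whether one picks up $c_{n-2}$ versus $c_{n-1}$, and (iii) tracking the measure-normalizing factor $\sigma(x)$ through Lemma~2.1, since for the Busemann--Hausdorff measure $\sigma(x)$ itself carries $\Lambda_F$-dependence. A clean way to avoid double-counting is to first rewrite everything as a purely Riemannian integral for the reference metric $g_N$ on $T_xM$ (absorbing $e^{\tau}$ and $d\nu_x$ into the $g_N$-volume form times a bounded density that is $\le\Lambda_F^{n}(x)$ by Lemma~2.1 combined with the comparison of $\det g_{ij}$ to $\det(g_N)_{ij}$), and only then enlarge the unit ball of $F(x,\cdot)$ to the $g_N$-ball of radius $\sqrt{\Lambda_F(x)}$.
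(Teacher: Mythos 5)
Your plan follows the paper's proof essentially step for step: choose a $g_{\mathbf{n}_\sharp}$-orthonormal basis with $e_n=\mathbf{n}_\sharp$ so that $g_{\mathbf{n}_\sharp}(\mathbf{n}_\sharp,y)=y^n$, convert the sphere integral into a solid integral over the half-body $B^+_x=\{F(x,\cdot)<1,\ y^n>0\}$ by Stokes, enlarge that body to the $g_{\mathbf{n}_\sharp}$-half-ball of radius $\sqrt{\Lambda_F(x)}$, and trace equality back to $\Lambda_F(x)=1$. Your symmetric treatment of $\sharp=-$ is also legitimate (the paper instead invokes the reverse metric $\widetilde F$, but the $\sharp=+$ argument uses nothing beyond $\mathbf{n}_\sharp$ being a unit vector).

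The one step you leave unresolved is exactly the one you flag, and as written your accounting does not close: the listed contributions ($\tfrac12$ from $y^n\le\Lambda_F^{1/2}$, $\tfrac n2$ from $\sqrt{\det g_{ij}}$, $\tfrac n2+\tfrac12$ from enlarging the ball) sum to $n+1$, not $2n+\tfrac12$. The correct accounting, which is what the paper does, is: do \emph{not} bound $y^n$ pointwise; keep it as a weight. The density multiplying the solid-angle form is $e^{\tau(y)}\sqrt{\det g_{ij}(x,y)}=\det g_{ij}(x,y)/\sigma(x)=\sqrt{\det g_{ij}}\cdot e^{\tau}\le\Lambda_F^{n/2}(x)\cdot\Lambda_F^{n}(x)=\Lambda_F^{3n/2}(x)$, combining $\det g_{ij}\le\Lambda_F^n$ with Lemma~2.1. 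Stokes applied to the form carrying the extra factor $y^n$ (homogeneous of degree $n+1$) gives $(n+1)\int_{B^+_x}y^n\,dy$, and enlarging to the half-ball of radius $R=\sqrt{\Lambda_F(x)}$ yields $(n+1)\int_{\mathbb{B}^n_+(R)}y^n\,dy=\frac{c_{n-2}}{n-1}R^{n+1}=\frac{c_{n-2}}{n-1}\Lambda_F^{(n+1)/2}(x)$; then $\tfrac{3n}{2}+\tfrac{n+1}{2}=2n+\tfrac12$ as required. Note that replacing the weight $y^n$ by the pointwise bound $\Lambda_F^{1/2}$ and integrating $1$ over the half-ball gives the same power of $\Lambda_F$ but the constant $\frac{(n+1)c_{n-1}}{2n}>\frac{c_{n-2}}{n-1}$, so that shortcut does not recover the stated constant (and hence would also spoil the equality discussion); keeping the weight is not optional.
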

\begin{proof} Suppose $\sharp=+$. By \cite{W}, one can choose a $g_{\mathbf{n}_+}$-orthnormal basis $\{e_i\}$ of $T_xM$ such that $e_n=\mathbf{n}_+$ and $\det g_{ij}(x,y)\leq \Lambda_F^n(x)$. Let $\{y^i\}$ be the corresponding coordinates. Set $\|\cdot\|:=\sqrt{g_{\mathbf{n}_+}(\cdot, \cdot)}$. Define
\begin{align*}
&B^+_x:=\{y\in T_xM: F(y)<1,\, y^n>0\}, \ B^+_{x,r}:=\{y\in T_xM: F(y)=1,\, y^n=r\}\\
&\mathbb{B}_{x,r}(s):=\{y\in T_xM: y^n=r,\, \|y^\alpha e_\alpha\|<s\}, \ \varpi:=g_{\mathbf{n}_+}(\mathbf{n}_+,y)e^{\tau(y)}d\nu_p(y).
\end{align*}
For each $y\in B^+_x$, $y^n=g_{\mathbf{n}_+}(\mathbf{n}_+,y)\leq F(\mathbf{n}_+)F(y)\leq 1$.
Stokes' formula together with Lemma 2.1 then yields
\begin{align*}
\int_{S_x^+\partial M}\varpi&\leq\Lambda_F^{3n/2}(x)\int_{S_x^+\partial M}y^n\overset{n}{\sum_{k=1}}(-1)^{k-1}y^k dy^1\wedge \cdots \wedge \widehat{dy^k}\wedge \cdots \wedge dy^n\\
&= (n+1)\Lambda_F^{\frac{3n}{2}}(x)\int_{B^+_x}y^n dy^1\wedge\cdots\wedge dy^n\\
&=(n+1)\Lambda_F^{\frac{3n}{2}}(x)\int^1_0\vo(B^+_{x,y^n})y^ndy^n\\
&\leq (n+1)\Lambda_F^{\frac{3n}{2}}(x)\int^{\sqrt{\Lambda_F(x)}}_0 \vo \left(\mathbb{B}_{x,y^n}(\sqrt{\Lambda_F(x)-(y^n)^2})\right)y^ndy^n\\
&=\frac{c_{n-2}}{n-1}\Lambda_F^{2n+\frac12}(x),
\end{align*}
with equality if and only if $\Lambda_F(x)=1$, i.e., $F(x,\cdot)$ is a Euclidean norm.

Suppose $\sharp=-$. Note that $\Lambda_F(x)=\Lambda_{\widetilde{F}}(x)$.  Using the same method as in Theorem \ref{fII}, one can get the formula.
\end{proof}

Given any point $x\in M$, let $(r,y)$ denote the polar coordinates about $x$. Set $\mathscr{F}(r,y)=e^{\tau(\gamma_y(r))}\hat{\sigma}_x(r,y)$, where $d\mu|_{(r,y)}=:\hat{\sigma}_x(r,y)dr\wedge d\nu_x(y)$. Then we have the following inequality of Berger-Kazdan type \cite[Theorem 1.3]{ZY}
\begin{lemma}[\cite{ZY}]\label{B-K}
Let $(M,F)$ be a compact Finsler $n$-manifold. For each $y\in SM$
and $0<t\leq l\leq i_y$, we have
\[
\int^{l}_0 dr\int^{l-r}_0\mathscr{F}(t, \varphi_r(y))\,dt\geq
\frac{\pi c_{n}}{2c_{n-1}}\left(\frac{l}{\pi}\right)^{n+1},
\]
with equality if and only if
\[
R_{\dot{\gamma}_y(t)}(\cdot,\dot{\gamma}_y(t))\dot{\gamma}_y(t)=\left(\frac{\pi}{l}\right)^{2}\id,\
0\leq t\leq l,
\]
where $R$ is the (Riemannian) curvature tensor acting on
$\dot{\gamma}_y(t)^\bot$.
\end{lemma}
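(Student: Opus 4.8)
The plan is to read the left-hand side as a Berger--Kazdan type functional attached to the single geodesic $\gamma_y$ and, by passing to Jacobi fields, to reduce it to the classical one-dimensional Berger--Kazdan inequality. Fix $y\in SM$, write $\gamma:=\gamma_y$, and work along $\gamma|_{[0,l]}$. Using the Chern connection one obtains on each $\dot\gamma(s)^{\bot}$ (the $g_{\dot\gamma(s)}$-orthogonal complement of $\dot\gamma(s)$) a parallel $g_{\dot\gamma(s)}$-orthonormal frame in which the flag curvature operator $R_{\dot\gamma(s)}(\,\cdot\,,\dot\gamma(s))\dot\gamma(s)$ is represented by a smooth symmetric $(n-1)\times(n-1)$ matrix $\mathfrak{R}(s)$. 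For $0\le r<l$ let $\mathcal{A}_r(s)$ solve $\mathcal{A}''+\mathfrak{R}\mathcal{A}=0$ with $\mathcal{A}_r(r)=0$ and $\mathcal{A}_r'(r)=I$; the hypothesis $l\le i_y$ gives $\det\mathcal{A}_0(s)>0$ for $s\in(0,l)$. The first task is to check that, with $d\mu$, $d\nu_x$ and the distortion $\tau$ related as in \cite[\S 2]{Sh1}, one has $\mathscr{F}(t,\varphi_r(y))=\det\mathcal{A}_r(r+t)$ for $0\le t\le l-r$: this is the Finsler analogue of the statement that the distortion is precisely the factor converting the $d\mu$-Jacobian of $\exp_{\gamma(r)}$ into the Jacobi determinant computed against a $g_{\dot\gamma}$-orthonormal frame, and it is carried out as in \cite{ZY}.

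With that identification, I would substitute $s=r+t$, so that the left-hand side becomes $\iint_{0<r<s<l}\det\mathcal{A}_r(s)\,ds\,dr$. Since $\mathfrak{R}$ is symmetric, the standard Wronskian manipulation for the system $\mathcal{A}''+\mathfrak{R}\mathcal{A}=0$ gives
\[
\mathcal{A}_r(s)=\mathcal{A}_0(s)\Bigl(\int_r^s\mathcal{A}_0(\tau)^{-1}\bigl(\mathcal{A}_0(\tau)^{-1}\bigr)^{T}\,d\tau\Bigr)\mathcal{A}_0(r)^{T},
\]
using also that $\mathcal{A}_0'\mathcal{A}_0^{-1}$ is symmetric. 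Applying Minkowski's determinant inequality (concavity of $\det^{1/(n-1)}$ on positive definite symmetric matrices) to the middle factor, and writing $w:=(\det\mathcal{A}_0)^{1/(n-1)}$, which is positive on $(0,l)$ with $w(0)=0$, $w'(0)=1$, one gets
\[
\det\mathcal{A}_r(s)\ \ge\ \Bigl(w(r)\,w(s)\int_r^s\frac{d\tau}{w(\tau)^2}\Bigr)^{\,n-1}.
\]
This reduces the assertion to the purely one-dimensional estimate
\[
\iint_{0<r<s<l}\Bigl(w(r)\,w(s)\int_r^s\frac{d\tau}{w(\tau)^2}\Bigr)^{n-1}ds\,dr\ \ge\ \frac{\pi c_n}{2c_{n-1}}\Bigl(\frac{l}{\pi}\Bigr)^{n+1}
\]
for every positive $w$ on $(0,l)$ with $w(0)=0$, which is exactly the classical Berger--Kazdan inequality (see \cite{Cr2} and the references therein); a direct computation with $w(\tau)=(l/\pi)\sin(\pi\tau/l)$ shows that the constant is sharp.

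For the equality case I would run the two inequalities backwards: equality in Minkowski's inequality forces every $\mathcal{A}_0(\tau)^{-1}(\mathcal{A}_0(\tau)^{-1})^{T}$, hence every $\mathfrak{R}(\tau)$, to be a scalar multiple of the identity, and equality in the one-dimensional Berger--Kazdan inequality then pins that scalar to $(\pi/l)^2$, i.e. $R_{\dot\gamma_y(t)}(\,\cdot\,,\dot\gamma_y(t))\dot\gamma_y(t)=(\pi/l)^2\,\id$ on $\dot\gamma_y(t)^{\bot}$ for $0\le t\le l$. I expect the genuinely Finsler-specific obstacle to be the identification $\mathscr{F}(t,\varphi_r(y))=\det\mathcal{A}_r(r+t)$: once this bookkeeping among $d\mu$, $d\nu_x$ and $e^{\tau}$ is set up correctly, everything else is a statement about the linear ODE $\mathcal{A}''+\mathfrak{R}\mathcal{A}=0$ with symmetric $\mathfrak{R}$, and the Riemannian argument goes through unchanged. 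A minor additional point is the endpoint $l=i_y$, where $\mathcal{A}_0(l)$ may be singular; there the integrand stays integrable, so one may simply let $l$ increase to $i_y$.
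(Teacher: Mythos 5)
The paper does not prove this lemma; it is quoted verbatim from \cite[Theorem~1.3]{ZY}, whose proof is exactly the reduction you describe: identify $\mathscr{F}(t,\varphi_r(y))$ with the Jacobi determinant $\det\mathcal{A}_r(r+t)$ in a $g_{\dot\gamma}$-parallel orthonormal frame, use the Wronskian formula and Minkowski's determinant inequality to pass to $w=(\det\mathcal{A}_0)^{1/(n-1)}$, and invoke the classical one-dimensional Berger--Kazdan (Kazdan) inequality, whose extremal $w(\tau)=(l/\pi)\sin(\pi\tau/l)$ gives the stated constant and the equality case. Your proposal is correct and follows essentially the same route as the cited source.
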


Now we have the following theorem.
\begin{theorem}\label{t32}
Let $(M,\partial M,F,d\mu)$ be a compact Finsler $n$-manifold with smooth boundary, where $d\mu$ is either the Busemann-Hausdorff measure or the Holmes-Thompson measure. Set
\[
\omega:=\inf_{x\in M}\min\{\omega^+_x,\,\omega^-_x\}=\min \left\{\inf_{x\in M}\omega^+_x,\inf_{x\in M}\omega^-_x\,\right\},
\]
where $\omega_x^\pm:=c_{n-1}^{-1}\int_{U_x^\pm}e^{\tau(y)}d\nu_x(y)$ and $U_x^\pm:=\pi|_{\mathcal {V}^\pm_M}^{-1}(x)$.
Then

(1)
\[
\frac{\A_\pm(\partial M)}{\mu(M)}\geq \frac{(n-1)c_{n-1}\,\omega}{c_{n-2}\,D\,\Lambda_F^{2n+\frac12}},
\]
where $D:=\diam(M)$.

(2)
\[
\frac{\A_\pm(\partial M)}{\mu(M)^{1-\frac1n}}\geq \frac{c_{n-1}\omega^{1+\frac1n}}{(c_n/2)^{1-\frac1n}\Lambda_F^{2n+\frac52}},\tag{5.1}\label{Crokii}
\]
with equality if and only if $(M, F)$ is a Riemannian hemisphere of a constant sectional curvature sphere.
\end{theorem}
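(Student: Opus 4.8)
The plan is to mimic Croke's original argument, now split into the two asymmetric halves supplied by Theorem \ref{fII}. I focus on the "$+$" case; the "$-$" case is obtained verbatim by passing to $(M,\partial M,\widetilde F)$ and using that $\Lambda_F=\Lambda_{\widetilde F}$, $\A_-(\partial M,F)=\A_+(\partial M,\widetilde F)$, and $\mu=\tilde\mu$. The starting point is the integral-geometric identity one gets by applying \eqref{th1i} to a cleverly chosen test function. First I would prove the mass of $M$ in the unit sphere bundle: since $dV_{SM}=e^{\tau(y)}\pi^*(d\mu)\wedge d\nu_x(y)$, integrating the constant function $1$ over $\mathcal V_M^-$ (resp. over $\pi^{-1}(x)\cap\mathcal V_M^-=U_x^-$ fibrewise) gives $V_{SM}(\mathcal V_M^-)=\int_M\left(\int_{U_x^-}e^{\tau(y)}d\nu_x(y)\right)d\mu(x)=c_{n-1}\int_M\omega^+_x\,d\mu(x)\ge c_{n-1}\,\omega\,\mu(M)$. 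This is the quantity that must be bounded above by the boundary term.

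For part (1): take $f\equiv 1$ in \eqref{th1i}. The right-hand side becomes $\int_{S^+\partial M}e^{\tau(y)}g_{\mathbf n_+}(\mathbf n_+,y)\,l(y)\,d\chi_+(y)$. Bound $l(y)\le\diam(M)=D$ pointwise, then integrate out the sphere fibre using Lemma \ref{es1}: $\int_{S^+_x\partial M}g_{\mathbf n_+}(\mathbf n_+,y)e^{\tau(y)}d\nu_x(y)\le\frac{c_{n-2}}{n-1}\Lambda_F^{2n+\frac12}$. Combining with the lower bound $c_{n-1}\,\omega\,\mu(M)\le V_{SM}(\mathcal V_M^-)=$ RHS $\le D\cdot\frac{c_{n-2}}{n-1}\Lambda_F^{2n+1/2}\A_+(\partial M)$ and rearranging yields the stated inequality. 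For part (2), instead I would choose $f$ adapted to the geodesic flow so that the Berger–Kazdan estimate Lemma \ref{B-K} enters. Following Croke, set $f(y):=$ (length of the maximal flow segment through $y$ staying in the relevant minimizing regime), or more precisely apply \eqref{th1i} to the function whose fibre integral produces $\int_0^{l(y)}\!\!dr\int_0^{l(y)-r}\mathscr F(t,\varphi_r(y))\,dt$; by Lemma \ref{B-K} this is $\ge\frac{\pi c_n}{2c_{n-1}}(l(y)/\pi)^{n+1}$. One then has, schematically, $\left(\int_M\omega^+_x\,d\mu\right)^{?}$-type quantity on one side and an integral of $e^\tau g_{\mathbf n_+}(\mathbf n_+,\cdot)\,l(y)^{n+1}$ over $S^+\partial M$ on the other; applying Hölder's inequality in $y\in S^+\partial M$ with exponents $n+1$ and $\frac{n+1}{n}$ to separate the $l(y)^{n+1}$ factor from the measure $e^\tau g_{\mathbf n_+}(\mathbf n_+,\cdot)\,d\chi_+$, then bounding the measure factor by Lemma \ref{es1} and collecting the powers of $\Lambda_F$, gives the isoperimetric inequality $\A_+(\partial M)\ge c_{n-1}\omega^{1+1/n}(c_n/2)^{-(1-1/n)}\Lambda_F^{-(2n+5/2)}\mu(M)^{1-1/n}$.

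The equality discussion is where most of the care goes. Equality forces equality in Lemma \ref{es1} at every boundary point, hence $F(x,\cdot)$ Euclidean on $\partial M$; equality in the distortion bound Lemma 2.1 forces $\Lambda_F=1$, so $F$ is Riemannian throughout; equality in Lemma \ref{B-K} forces the constant curvature condition $R_{\dot\gamma}(\cdot,\dot\gamma)\dot\gamma=(\pi/l)^2\id$ along a.e. geodesic, and $l(y)\equiv D$; and equality in Hölder forces $l(y)$ to be $d\chi_+$-a.e. constant. Putting these together with the classical rigidity argument (as in \cite{Cr}) identifies $(M,F)$ with a Riemannian hemisphere of a constant-curvature sphere. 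The main obstacle I anticipate is purely bookkeeping: tracking the exact power of $\Lambda_F$ through the three successive estimates (Lemma 2.1 for $e^\tau$, Proposition-style gradient/volume bounds, Lemma \ref{es1} for the boundary fibre integral) and the Hölder step, since an off-by-one in any exponent changes the final constant; a secondary subtlety is making sure the $\omega$ appearing on the left (an infimum of fibre integrals over all of $M$) and the flow-segment integrals on the right are matched correctly — i.e. that the lower bound $c_{n-1}\omega\,\mu(M)\le V_{SM}(\mathcal V_M^-)$ is the right inequality to combine with the upper Berger–Kazdan bound, rather than losing a factor by bounding too early.
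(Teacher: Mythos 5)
Your plan is correct and follows essentially the same route as the paper: part (1) is the Santal\'o formula with $f\equiv 1$ plus $l(y)\le D$ and Lemma \ref{es1}, and part (2) is Croke's double-integration trick (the quantity you leave schematic on the left is exactly $\mu(M)^2$, obtained by writing $\mu(M)$ in polar coordinates about each $x$) combined with Lemma \ref{B-K}, the power-mean/H\"older step, $V_{SM}(\mathcal V_M^-)\ge c_{n-1}\omega\,\mu(M)$, and Lemma \ref{es1}, with the equality analysis proceeding exactly as you describe. The only slip is notational: the fibre integral over $U_x^-$ gives $c_{n-1}\omega_x^-$, not $c_{n-1}\omega_x^+$, which is harmless since $\omega$ is the infimum over both.
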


\begin{proof}(1) Theorem \ref{t1} together with Lemma \ref{es1} furnishes
\begin{align*}
c_{n-1}\omega \mu(M)&\leq c_{n-1}\int_M \omega^\mp_x d\mu(x)=\int_{x\in M}d\mu(x)\int_{U^\mp_x}e^{\tau(y)}d\nu_x(y)\\&=V_{SM}(\mathcal {V}^\mp_M)
\leq D\int_{S^\pm\partial M}e^{\tau(y)}g_{\mathbf{n}_\pm}(\mathbf{n}_\pm,y)d\chi_\pm(y)\\
&\leq D\A_\pm(\partial M)\frac{c_{n-2}}{n-1}\Lambda_F^{2n+\frac12}.
\end{align*}

(2) For each $y\in S^+\partial M$, $l(\varphi_t(y))\geq l(y)-t$, for any $0\leq t\leq l(y)$. By Theorem \ref{t1}, Lemma 2.1, Theorem \ref{B-K} and H\"older's inequality, we have
\begin{align*}
&\mu^2(M)\\
=&\int_M d\mu(x)\int_{S_xM}d\nu_x(y)\int^{l(y)}_0 \hat{\sigma}_x(r,y)dr=\int_{SM}dV_{SM}(y)\int^{l(y)}_0e^{-\tau(y)}\hat{\sigma}_x(r,y)dr\\
\geq& \int_{\mathcal {V}^-_M}dV_{SM}(y)\int^{l(y)}_0e^{-\tau(y)}\hat{\sigma}_x(r,y)dr\\
=&\int_{S^+\partial M}e^{\tau(y)}g_{\mathbf{n}_+}(\mathbf{n}_+,y)d\chi_+(y)\int^{l(y)}_0dt\int^{l(\varphi_t(y))}_0e^{-\tau(\varphi_t(y))}\hat{\sigma}_x(r,\varphi_t(y))dr\\
\geq& \Lambda_F^{-2n} \int_{S^+\partial M}e^{\tau(y)}g_{\mathbf{n}_+}(\mathbf{n}_+,y)d\chi_+(y)\int^{l(y)}_0dt\int^{l(y)-t}_0\mathscr{F}(r,\varphi_t(y))dr\\
\geq&\frac{c_n}{2c_{n-1}\pi^n\Lambda_F^{2n}}\int_{S^+\partial M}l(y)^{n+1}e^{\tau(y)}g_{\mathbf{n}_+}(\mathbf{n}_+,y)d\chi_+(y)\\
\geq&  \frac{c_n}{2c_{n-1}\pi^n\Lambda_F^{2n}}\left(\int_{S^+\partial M} l(y)e^{\tau(y)}g_{\mathbf{n}_+}(\mathbf{n}_+,y)d\chi_+(y)\right)^{n+1}\left(\int_{S^+\partial M}e^{\tau(y)}g_{\mathbf{n}_+}(\mathbf{n}_+,y)d\chi_+(y) \right)^{-n}
\end{align*}
\begin{align*}
\geq & \frac{c_n}{2c_{n-1}\pi^n\Lambda_F^{2n}}V_{SM}(\mathcal {V}^-_M)^{n+1}\left(\frac{n-1}{c_{n-2}\A_+(\partial M)\Lambda^{2n+\frac12}_F}\right)^n\tag{5.2}\label{4.5}\\
\geq &\frac{(c_{n-1})^n\omega^{n+1}\mu(M)^{n+1}}{(c_n/2)^{n-1}\A_+^n(\partial M)\Lambda_F^{(2n+\frac52)n}}.
\end{align*}
That is,
\[
\frac{\A_+(\partial M)}{\mu(M)^{1-\frac1n}}\geq \frac{c_{n-1}\omega^{1+\frac1n}}{(c_n/2)^{1-\frac1n}\Lambda_F^{2n+\frac52}}.\tag{5.3}\label{A+hol}
\]
Let $\widetilde{\A}_\pm$ and $\tilde{\omega}$ be define as before on $(M,\widetilde{F})$. It is easy to check that  $\widetilde{\A}_\pm(\partial M)={\A}_\mp(\partial M)$ and $\tilde{\omega}=\omega$. From above, we obtain
\[
\frac{\A_-(\partial M)}{\mu(M)^{1-\frac1n}}=\frac{\widetilde{\A}_+(\partial M)}{\tilde{\mu}(M)^{1-\frac1n}}\geq \frac{c_{n-1}{\omega}^{1+\frac1n}}{(c_n/2)^{1-\frac1n}\Lambda_{{F}}^{2n+\frac52}}.\tag{5.4}\label{5.4}
\]
(\ref{A+hol}) together with (\ref{5.4}) yields (\ref{Crokii}).

Suppose that equality holds in (\ref{Crokii}). Then we have equality in (\ref{A+hol}) or (\ref{5.4}).
It follows from (\ref{4.5}) and Lemma \ref{es1} that $1=\Lambda_F=\Lambda_{\widetilde{F}}$. Hence, $F$ is an Riemannian metric and (\ref{Crokii}) becomes
\[
\frac{\A(\partial M)}{\mu(M)^{1-\frac1n}}= \frac{c_{n-1}\omega^{1+\frac1n}}{(c_n/2)^{1-\frac1n}}.
\]
Since $\mathcal {V}^-_M=SM$, $\mathfrak{t}(y)\leq i_y$, for all $y\in S\overline{M}$. H\"older's inequality implies $l(y)$ is constant, say, equal to $l$, on
all of $S^+\partial M$. Hence, $\mathfrak{t}(y)=l$, for all $y\in S^+\partial M$. And Theorem \ref{B-K} yields $M$ has constant sectional curvature
equal to $(\pi/l)^2$, i.e., $M$ is a hemisphere.
\end{proof}
From above, it is easy to see that Theorem \ref{t32} becomes Croke's isoperimetric inequality \cite{Cr} if $\Lambda_F=1$.
In the Finslerian case, the upper bound on $\Lambda_F$ in Theorem \ref{t32} is very important as the following example shows.

\begin{example}\label{Funk}
 Let $\mathbb{B}^n$ be the unit open ball in $\mathbb{R}^n$ equipped with a Funk metric $F$, that is,
\[
F(x,y)=\frac{\sqrt{(1-|x|^2)|y|^2+(x\cdot y)^2}+x\cdot y}{1-|x|^2},
\]
where $"|\cdot|"$ (resp. $" \cdot "$) denotes the Euclidean norm (resp. inner product). For $r\in (0,1)$, set $\Omega_r:=\{x\in \mathbb{B}^n: \, |x|<r\}$. Then $(\Omega_r,\partial\Omega_r,F|_{\overline{\Omega}_r})$ is a compact Finsler manifold with smooth boundary. By directly computing, we have $\mu_{BH}(\Omega_r)=\frac{c_{n-1}}{n}r^n$ and $\A_\pm(\partial\Omega_r)=c_{n-1}(1\pm r)r^{n-1}$, where $d\A_\pm$ are induced by $d\mu_{BH}$.
Clearly,
\[
\lim_{r\rightarrow 1}\frac{\A_+(\partial\Omega_r)}{\A_-(\partial\Omega_r)}=+\infty.
\]
Note that
\[
\Lambda_{F|_{\overline{\Omega}_r}}=\left(\frac{1+r}{1-r}\right)^2,\ \diam(\Omega_r)=\log\left( \frac{1+r}{1-r}\right).
\]
For any $x\in \Omega_r$,
\[
 \omega_x^\pm=\frac{1}{(1-|x|^2)^{\frac{n+1}2}}\geq 1,\ \text{i.e., } \omega=1.
\]
Hence,  we have
\begin{align*}
\frac{\A_\pm(\partial\Omega_r)}{\mu_{BH}(\Omega_r)}> \frac{(n-1)c_{n-1}\,\omega}{c_{n-2}\diam(\Omega_r)\,\Lambda_{F|_{\overline{\Omega}_r}}^{2n+\frac12}},\frac{\A_\pm(\partial\Omega_r)}{\mu_{BH}(\Omega_r)^{1-\frac1n}}> \frac{c_{n-1}\omega^{1+\frac1n}}{(c_n/2)^{1-\frac1n}\Lambda_{F|_{\overline{\Omega}_r}}^{2n+\frac52}}.
\end{align*}
In particular,
\[
\lim_{r\rightarrow 1}\Lambda_{F|_{\overline{\Omega}_r}}=+\infty,\ \lim_{r\rightarrow 1}\frac{\A_-(\partial\Omega_r)}{\mu_{BH}(\Omega_r)}=\lim_{r\rightarrow 1}\frac{\A_-(\partial\Omega_r)}{\mu_{BH}(\Omega_r)^{1-\frac1n}}=0.
\]
\end{example}

Before giving some applications of Theorem \ref{t32}, we introduce the definitions of the Sobolev constant, Cheeger's constant and the isoperimetric constant of a closed Finsler manifold.
\begin{definition}\label{def2}
Let $(M,F,d\mu)$ be a closed Finsler manifold.
The Sobolev constant $\mathcal {S}(M,d\mu)$ is defined as
\[
\mathcal {S}(M,d\mu):=\inf_{f\in C^\infty(M)}\frac{\left\{\int_MF^*(df)d\mu\right\}^n}{\inf_{\alpha\in \mathbb{R}}\left\{\int_M |f-\alpha|^{\frac{n}{n-1}}d\mu\right\}^{n-1}}.
\]
Cheeger's constant
${\mathbbm{h}}(M,d\mu)$ and the isoperimetric constant $\mathbb{I}(M,d\mu)$ are defined by
\begin{align*}
\mathbbm{h}(M,d\mu):=\underset{\Gamma}{\inf}\frac{\min\{\A_\pm(\Gamma)\}}{\min\{\mu(M_1),\mu(M_2)\}},\
\mathbb{I}(M,d\mu):=\inf_\Gamma\frac{\min\{\A_\pm(\Gamma)\}^n}{\{\min\{\mu(M_1),\,\mu(M_2)\}\}^{n-1}},
\end{align*}
where $\Gamma$ varies over compact $(n-1)$-dimensional submanifolds
of $M$ which divide $M$ into disjoint open submanifolds $M_1$, $M_2$
of $M$ with common boundary $\partial M_1=\partial M_2=\Gamma$.
\end{definition}

\begin{remark}\label{constant}
By using the co-area formula (cf. \cite[Theorem 3.3.1]{Sh1}) and the same argument as in \cite{L}, one can obtain a Cheeger type inequality
\[
\lambda_1(M,d\mu)\geq\frac{\mathbbm{h}^2(M,d\mu)}{4\lambda_F^2} .
\]
And we also have a Federer-Fleming type inequality (see Proposition 6.1 below), i.e.,
\[
\mathbb{I}(M,d\mu)\leq \mathcal {S}(M,d\mu)\leq 2 \mathbb{I}(M,d\mu).
\]
\end{remark}

\begin{corollary}Let $(M,F,d\mu)$ be a closed Finlser $n$-manifold with $\mathbf{Ric}\geq (n-1)k$, where $d\mu$ denotes either the Busemann-Hausdorff measure or the Holmes-Thompson measure. Then
\begin{align*}
\lambda_1(M,d\mu)&\geq \left[\frac{(n-1)\mu(M)}{4c_{n-2}\Lambda_F^{4n+1}\diam (M)\int^{\diam (M)}_0\mathfrak{s}_k^{n-1}(t)dt}\right]^2,\\
\mathcal {S}(M,d\mu)&\geq \frac{\mu(M)^{n+1}}{4 c_{n-1}(c_n)^{n-1}\Lambda_F^{4n^2+\frac{9n}{2}}\left(\int^{\diam(M)}_0\mathfrak{s}_k^{n-1}(t)dt\right)^{n+1}}.
\end{align*}
Hence, both $\lambda_1(M,d\mu)$ and $\mathcal {S}(M,d\mu)$ can be bounded from below in terms of the diameter, volume, uniform constant and a lower bound for the Ricci curvature.
\end{corollary}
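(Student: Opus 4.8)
The plan is to deduce both inequalities from Theorem~\ref{t32}, the two functional inequalities recorded in Remark~\ref{constant}, and one uniform volume‑comparison estimate for $\omega$. By Remark~\ref{constant} we have $\lambda_1(M,d\mu)\ge \mathbbm{h}^2(M,d\mu)/(4\lambda_F^2)\ge \mathbbm{h}^2(M,d\mu)/(4\Lambda_F)$ (using $\lambda_F^2\le\Lambda_F$) and $\mathcal{S}(M,d\mu)\ge \mathbb{I}(M,d\mu)$, so it suffices to bound Cheeger's constant $\mathbbm{h}(M,d\mu)$ and the isoperimetric constant $\mathbb{I}(M,d\mu)$ from below by the appropriate quantities.

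To do this, fix a hypersurface $\Gamma$ dividing $M$ into $M_1,M_2$ and let $\Omega$ be the piece with $\mu(\Omega)=\min\{\mu(M_1),\mu(M_2)\}$, viewed as a compact Finsler $n$-manifold with smooth boundary $\partial\Omega=\Gamma$ — but with the cut value $i(y)$ of $y\in S\Omega$ computed in the \emph{ambient} closed manifold $M$, so that the length $l(y)=\min\{i(y),\mathfrak{t}_\Omega(y)\}$ satisfies $l(y)\le i_M(y)\le\diam(M)=:D$ for every $y$ (since $\gamma_y|_{[0,l(y)]}$ is then $M$-minimizing). With this choice Theorem~\ref{fII} and Lemma~\ref{B-K} still apply, and $\A_\pm(\partial\Omega)$ coincides with $\A_\pm(\Gamma)$ up to interchanging $\pm$; hence Theorem~\ref{t32}(1),(2) give
\[
\frac{\min\{\A_\pm(\Gamma)\}}{\mu(\Omega)}\ge \frac{(n-1)c_{n-1}\,\omega(\Omega)}{c_{n-2}\,D\,\Lambda_F^{2n+\frac12}},\qquad
\frac{\min\{\A_\pm(\Gamma)\}^n}{\mu(\Omega)^{n-1}}\ge \frac{c_{n-1}^n\,\omega(\Omega)^{n+1}}{(c_n/2)^{n-1}\Lambda_F^{(2n+\frac52)n}}.
\]
Taking $\inf_\Gamma$, the task is reduced to a lower bound for $\omega(\Omega)$ that is uniform over all domains $\Omega\subset M$ with smooth boundary.

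The key step is the estimate $\omega(\Omega)\ge \mu(M)\big/\big(c_{n-1}\Lambda_F^{2n}\int_0^{D}\mathfrak{s}_k^{n-1}(t)\,dt\big)$ for every such $\Omega$. To prove it, one fixes $x$ in the interior of $\Omega$, expresses $\mu(M)$ — and the volume of the part of $M$ covered by $M$-minimizing geodesics issuing from $x$ that leave $\Omega$ before their cut time — in geodesic polar coordinates at $x$, invokes the Finslerian Bishop–Gromov volume comparison (valid for $d\mu_{BH}$ and $d\mu_{HT}$ under $\mathbf{Ric}\ge(n-1)k$, with a $\Lambda_F$-correction absorbing the a priori uncontrolled $S$-curvature) to bound the density $\hat\sigma_x(r,y)$ by $\Lambda_F^{2n}\mathfrak{s}_k^{n-1}(r)$ and the cut radii by $D$, and then runs Croke's direction‑counting argument (as in \cite{Cr}) together with $e^{\tau}\ge\Lambda_F^{-n}$. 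Feeding this into the two displayed inequalities and then into $\lambda_1\ge\mathbbm{h}^2/(4\Lambda_F)$ and $\mathcal{S}\ge\mathbb{I}$, and collecting powers of $\Lambda_F$ — the exponent $4n+1=(2n+\tfrac12)+2n+\tfrac12$ combining the power from Theorem~\ref{t32}(1), the power $2n$ from the key estimate, and the extra $\tfrac12$ from $\lambda_F^2\le\Lambda_F$, and $4n^2+\tfrac{9n}{2}=(2n+\tfrac52)n+2n(n+1)$ combining the power from Theorem~\ref{t32}(2) and the power $2n$ raised to the $(n{+}1)$st — yields exactly the two asserted inequalities (the numerical constants coming out with room to spare, since $\tfrac14\ge\tfrac1{16}$ and $2^{\,n-1}\ge\tfrac14$). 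The final sentence is then immediate, as the right‑hand sides depend only on $n$, $k$, $D=\diam(M)$, $\mu(M)$ and $\Lambda_F$. I expect the main obstacle to be this key estimate: the volume comparison must be arranged so that, beyond the Ricci lower bound, only the uniformity constant and not the $S$-curvature enters, and Croke's counting of the directions along which $\gamma_y$ reaches $\partial\Omega$ before the ambient cut locus must be adapted to the non‑reversible setting and to the use of the ambient rather than intrinsic cut value of $\Omega$.
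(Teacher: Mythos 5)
Your route is essentially the paper's own: reduce via Remark~\ref{constant} ($\lambda_1\geq \mathbbm{h}^2/(4\lambda_F^2)$ with $\lambda_F^2\leq\Lambda_F$, and $\mathcal S\geq\mathbb I$) to lower bounds for $\mathbbm h$ and $\mathbb I$; obtain those from Theorem~\ref{t32} applied to a piece cut off by $\Gamma$; and prove the uniform lower bound on $\omega$ by sweeping out the complementary piece with geodesics issuing from $x$ whose initial directions lie in $U_x^\pm$ and invoking the volume comparison of \cite{ZY} (the paper's Step~1, where one factor $\Lambda_F^{n}$ comes from the comparison and another from $e^{\tau}\geq\Lambda_F^{-n}$, giving the $\Lambda_F^{2n}$ you record; your exponent bookkeeping $4n+1$ and $4n^2+\tfrac{9n}{2}$ is correct, and your remark about using the ambient cut value so that $l(y)\leq\diam(M)$ is a reasonable way to handle a point the paper leaves implicit). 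One correction is needed in your key estimate: the geodesics from $x\in\Omega$ that leave $\Omega$ before their cut time are only guaranteed to cover $M\setminus\Omega$, not all of $M$ (points of $\Omega$ may be joined to $x$ only by minimizers that never meet $\Gamma$ before the cut point), so the argument yields $\omega(\Omega)\geq \mu(M\setminus\Omega)\big/\bigl(c_{n-1}\Lambda_F^{2n}\int_0^{D}\mathfrak{s}_k^{n-1}(t)\,dt\bigr)\geq \mu(M)\big/\bigl(2c_{n-1}\Lambda_F^{2n}\int_0^{D}\mathfrak{s}_k^{n-1}(t)\,dt\bigr)$, with the factor $\tfrac12$, exactly as in the paper. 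With that factor the constants come out exactly as stated, e.g.\ $\frac{1}{4\Lambda_F}\Bigl[\frac{(n-1)\mu(M)}{2c_{n-2}\,D\,\Lambda_F^{4n+\frac12}\int_0^D\mathfrak{s}_k^{n-1}}\Bigr]^2=\Bigl[\frac{(n-1)\mu(M)}{4c_{n-2}\,D\,\Lambda_F^{4n+1}\int_0^D\mathfrak{s}_k^{n-1}}\Bigr]^2$, so contrary to your closing remark there is no room to spare; but since the corrected estimate suffices, the proof goes through as in the paper.
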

\begin{proof}
\noindent \textbf{Step 1.} Let $\Gamma$ be any $(n-1)$-dimensional compact submanifold of $M$ dividing $M$ into two open submanifolds $M_1$ and $M_2$, such that $\partial M_1=\partial M_2=\Gamma$. Given $x\in M_1$, let
\[
O_x:=\{q\in M: \exists \, y\in U^-_x \text{ such that }q=\widetilde{\gamma}_{-y}(t), t\in(0, \tilde{i}(-y)]\},
 \]
where $\widetilde{\gamma}_{-y}(t)$ is the geodesic in $(M,\widetilde{F})$ with $\dot{\widetilde{\gamma}}_{-y}(0)=-y$.

For any $q\in M_2$, there exists a minimal unit speed geodesic, say $\widetilde{\gamma}_X(t)$, from $x$ to $q$. Clearly, $\widetilde{\gamma}_X(t)$ must hit the boundary and therefore, $\tilde{\mathfrak{t}}(X)\leq \tilde{i}(X)$. Since $F(-X)=\widetilde{F}(X)=1$, $q\in O_x$ which implies that $M_2\subset O_x$.

Note that $\widetilde{\mathbf{Ric}}\geq (n-1)k$, $\Lambda_{\widetilde{F}}= \Lambda_F$ and $d\tilde{\mu}=d\mu$. Hence, by Lemma 2.1 and the volume comparison theorem (cf. \cite[Theorem 3.1]{ZY}), we have
\begin{align*}
\mu(M_2)&=\tilde{\mu}(M_2)\leq \tilde{\mu}(O_x)=\int_{y\in U^-_x}d\tilde{\nu}_x(-y)\int^{\tilde{i}(-y)}_0\widetilde{\hat{\sigma}}_x(r,-y)dr\\
&\leq\Lambda_F^n\int_{y\in U^-_x}d\tilde{\nu}_x(-y)\int^{\tilde{i}(-y)}_0\mathfrak{s}^{n-1}_k(r)dr\\
&\leq c_{n-1} \Lambda_F^{2n} \omega^-_1(x) \int^{\diam(M)}_0\mathfrak{s}^{n-1}_k(r)dr.
\end{align*}
That is,
\[
\omega^-_i\geq \frac{\mu(M_j)}{c_{n-1}\Lambda_F^{2n}\int^{\diam(M)}_0\mathfrak{s}^{n-1}_k(r)dr}, \, i\neq j.
\]

Set $O'_x:=\{q\in M: \exists \, y\in U^+_x \text{ such that }q={\gamma}_{y}(t), t\in(0, {i}(y)]\}$. It is easy to see that $M_2\subset O'_x$. By the similar argument, one can show that
\[
\omega^+_i\geq \frac{\mu(M_j)}{c_{n-1}\Lambda_F^{2n}\int^{\diam(M)}_0\mathfrak{s}^{n-1}_k(t)dt}, \, i\neq j.
\]

\noindent \textbf{Step 2.} The inequalities above together with Theorem \ref{t32} yield
\begin{align*}
\mathbbm{h}(M,d\mu)&\geq \frac{(n-1)\mu(M)}{2c_{n-2}\Lambda_F^{4n+\frac12}\diam (M)\int^{\diam (M)}_0\mathfrak{s}_k^{n-1}(t)dt},\\
\mathbb{I}(M,d\mu)&\geq \frac{\mu(M)^{n+1}}{4 c_{n-1}(c_n)^{n-1}\Lambda_F^{4n^2+\frac{9n}{2}}\left(\int^{\diam(M)}_0\mathfrak{s}_k^{n-1}(t)dt\right)^{n+1}}.
\end{align*}
Corollary now follows from Remark \ref{constant}.
\end{proof}

\begin{corollary}\label{Balles}Let $(M,F,d\mu)$ be a closed Finsler $n$-manifold, where $d\mu$ is either the Busemann-Hausdorff measure or the Holmes-Thompson measure. Then for any $x\in M$ and $0<r< \mathfrak{i}_M/(1+\sqrt{\Lambda_F})$, we have
\begin{align*}
\mu(B_x^+(r))\geq \frac{C(n,\Lambda_F)}{n^n}r^n, \, \A_\pm(S^+_x(r))\geq \frac{C(n,\Lambda_F)}{n^{n-1}}r^{n-1}.
\end{align*}
\end{corollary}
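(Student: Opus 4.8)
The plan is to deduce Corollary \ref{Balles} from Theorem \ref{firsteig} applied not to $M$ itself but to a suitable forward metric ball, together with the lower eigenvalue estimate already obtained in the preceding Corollary. More precisely, fix $x\in M$ and $0<r<\mathfrak{i}_M/(1+\sqrt{\Lambda_F})$; the hypothesis on $r$ guarantees (by the standard comparison in the Finsler setting, cf. \cite{Sh1,BCS}) that every geodesic ray issuing from a point of $\overline{B^+_x(r)}$ minimizes distance up to the point where it leaves $B^+_x(r)$, and moreover $\diam(B^+_x(r))\leq 2\sqrt{\Lambda_F}\,r$. Hence $(\overline{B^+_x(r)},S^+_x(r),F)$ is a compact Finsler manifold with smooth boundary satisfying the hypotheses of Theorem \ref{firsteig}, and we may freely use the isoperimetric inequalities of Theorem \ref{t32} on it.

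First I would control the quantity $\omega$ of Theorem \ref{t32} for the domain $B^+_x(r)$ from below. Since $r<\mathfrak{i}_M/(1+\sqrt{\Lambda_F})$, for each interior point $p$ every unit vector at $p$ is minimizing up to $\partial B^+_x(r)$, so $\mathcal{V}^\pm=S B^+_x(r)$ and $U^\pm_p=S_pM$; consequently $\omega^\pm_p=c_{n-1}^{-1}\int_{S_pM}e^{\tau(y)}d\nu_p(y)$, which by Lemma 2.1 (the same computation as in (\ref{4.3})) is $\geq\Lambda_F^{-2n}$ for the Busemann--Hausdorff measure and $\geq 1$ for the Holmes--Thompson measure. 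In either case $\omega\geq\Lambda_F^{-2n}$. Next, the two inequalities of Theorem \ref{t32}(1)--(2) applied on $B^+_x(r)$, with $D=\diam(B^+_x(r))\leq 2\sqrt{\Lambda_F}\,r$, give
\[
\frac{\A_\pm(S^+_x(r))}{\mu(B^+_x(r))}\geq \frac{(n-1)c_{n-1}\,\omega}{c_{n-2}\,2\sqrt{\Lambda_F}\,r\,\Lambda_F^{2n+\frac12}},\qquad \frac{\A_\pm(S^+_x(r))}{\mu(B^+_x(r))^{1-\frac1n}}\geq \frac{c_{n-1}\omega^{1+\frac1n}}{(c_n/2)^{1-\frac1n}\Lambda_F^{2n+\frac52}}.
\]

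The key remaining point is a lower bound for $\mu(B^+_x(r))$ itself in terms of $r$. The cleanest route is the one already used in the preceding Corollary, run in reverse: applying Santal\'o's formula (Theorem \ref{t1}) and the Berger--Kazdan inequality (Lemma \ref{B-K}) to $B^+_x(r)$ exactly as in the proof of Theorem \ref{t32}(2) — but now keeping $l(y)$ bounded below rather than above — one obtains $\mu(B^+_x(r))\geq c(n,\Lambda_F)\,r^n$ for a constant $c(n,\Lambda_F)$ depending only on $n$ and $\Lambda_F$. Indeed, because $r<\mathfrak{i}_M/(1+\sqrt{\Lambda_F})$, every $y\in S^+ S^+_x(r)$ has $l(y)=\mathfrak{t}(y)$ comparable to $r$ (bounded below by $r/\sqrt{\Lambda_F}$ type quantity, from the uniformity constant), so the chain of inequalities culminating in the term $\int l(y)^{n+1}e^{\tau(y)}g_{\mathbf n_+}(\mathbf n_+,y)\,d\chi_+(y)$ produces a genuine lower bound of order $r^{n+1}$ times $\A_+(S^+_x(r))$, which after dividing through yields $\mu(B^+_x(r))\geq c(n,\Lambda_F) r^n$. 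Substituting this into the two displayed inequalities above, and absorbing all $n$- and $\Lambda_F$-dependent factors into a single constant $C(n,\Lambda_F)$, gives both asserted bounds $\mu(B^+_x(r))\geq C(n,\Lambda_F)n^{-n}r^n$ and $\A_\pm(S^+_x(r))\geq C(n,\Lambda_F)n^{-(n-1)}r^{n-1}$, with the powers of $n$ extracted from the volume-of-Euclidean-ball normalizations $c_{n-1},c_{n-2},c_n$.

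The main obstacle I anticipate is bookkeeping rather than conceptual: verifying that for $r$ in the stated range the ball $B^+_x(r)$ really does satisfy \emph{all} the structural hypotheses (smooth boundary $S^+_x(r)$, no cut points reached before the boundary in either $F$ or $\widetilde F$, diameter estimate $\leq 2\sqrt{\Lambda_F}r$), and then tracking the exact exponent of $\Lambda_F$ and the exact combinatorial constant through the Santal\'o–Berger–Kazdan estimate so that the final constant genuinely depends only on $n$ and $\Lambda_F$. Some care is also needed because $F$ is non-reversible, so $B^+_x(r)$ and the sphere $S^+_x(r)$ are the forward objects and one must consistently use the forward injectivity radius $\mathfrak i_M$ and, where the reverse metric enters (via $\mathcal V^-_M$, $\widetilde F$), the identity $\Lambda_{\widetilde F}=\Lambda_F$ and $d\tilde\mu=d\mu$ from Section 2; but these are exactly the tools already assembled, so no new idea is required.
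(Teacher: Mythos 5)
Your overall skeleton agrees with the paper's: apply Theorem \ref{t32} to the domain $B^+_x(r)$, observing that the restriction $r<\mathfrak{i}_M/(1+\sqrt{\Lambda_F})$ forces $U^\pm_p=S_pM$ and hence $\omega\geq \Lambda_F^{-2n}$ by (\ref{4.3}). (The appeal to Theorem \ref{firsteig} and the eigenvalue corollary is a red herring --- they play no role --- but that is harmless.) The genuine gap is in the step you yourself flag as the ``key remaining point'', the lower bound $\mu(B^+_x(r))\geq c(n,\Lambda_F)r^n$. Your derivation rests on the claim that every $y\in S^+\partial B^+_x(r)$ has $l(y)=\mathfrak{t}(y)$ bounded below by a quantity comparable to $r$. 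This is false: for directions $y$ nearly tangent to the metric sphere $S^+_x(r)$, the geodesic chord leaves the ball in arbitrarily short time, so $\inf_{y\in S^+\partial B^+_x(r)}\mathfrak{t}(y)=0$. This already happens for the round Euclidean ball, so the uniformity constant $\Lambda_F$ cannot repair it. Consequently the term $\int_{S^+\partial M}l(y)^{n+1}e^{\tau(y)}g_{\mathbf{n}_+}(\mathbf{n}_+,y)\,d\chi_+(y)$ admits no lower bound of order $r^{n+1}\A_+(S^+_x(r))$, and your chain $\mu^2\gtrsim r^{n+1}\A_+$ collapses.

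The paper closes this gap differently and more simply: by the co-area formula for the distance function from $x$ one has the identity $\frac{d}{dr}\mu(B^+_x(r))=\A_-(S^+_x(r))$ (the gradient of $d(x,\cdot)$ is the outward normal of the ball), and feeding this into the isoperimetric inequality (2) of Theorem \ref{t32} yields the differential inequality $\frac{d}{dr}\mu(B^+_x(r))\geq C(n,\Lambda_F)\,\mu(B^+_x(r))^{1-\frac1n}$. Integrating $\frac{d}{dr}\bigl(\mu^{1/n}\bigr)\geq C/n$ from $0$ to $r$ gives $\mu(B^+_x(r))\geq (C/n)^n r^n$, and a second application of Theorem \ref{t32}(2) then gives the area bound. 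Note in particular that the factors $n^{-n}$ and $n^{-(n-1)}$ in the statement arise from this integration of the differential inequality, not, as you suggest, from the normalizing constants $c_{n-1},c_{n-2},c_n$. If you replace your Santal\'o--Berger--Kazdan volume estimate by this co-area/ODE argument, the rest of your write-up goes through.
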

\begin{proof}The similar argument as in Lemma \ref{inject} shows
$\mathfrak{i}_M=\tilde{\mathfrak{i}}_M$, where $\tilde{\mathfrak{i}}_M$ is the injectivity radius of $(M,\widetilde{F})$. Hence, $U_x^\pm=S_xM$ for all $x\in B_x^+(r)$.
By Theorem \ref{t32} and (\ref{4.3}), we have
\[
\frac{\frac{d}{dr}\mu(B^+_x(r))}{\mu(B^+_x(r))^{1-\frac1n}}=\frac{\A_-(S^+_x(r))}{\mu(B^+_x(r))^{1-\frac1n}}\geq C(n,\Lambda_F),
\]
which implies that
\[
\mu(B_x^+(r))\geq \frac{C(n,\Lambda_F)}{n^n}r^n.\tag{5.5}\label{4.8}
\]
Theorem \ref{t32} together with (\ref{4.8}) yields
\[
\A_\pm(S^+_x(r))\geq \frac{C(n,\Lambda_F)}{n^{n-1}}r^{n-1}.
\]
\end{proof}

In order to establish Theorem 1.5, let us recall some definitions and properties of general LGC spaces first. Refer to \cite{SZ1,Z} for more details.
\begin{definition}[\cite{SZ1,Z}]
A general metric space is a pair $(X,d)$, where $X$ is a set
and $d: X\times X \rightarrow \mathbb{R}^{+}\cup\{\infty\}$, called
a metric, is a function, satisfying the following two conditions: (a) $d(x,y)\geq 0, \mbox{with equality }\Leftrightarrow x=y;$
(b) $d(x,y)+d(y,z)\geq d(x,z).$
The reversibility $\lambda_X$ of a general metric space $(X,d)$ is defined by
$\lambda_X:=\sup_{x\neq y}\frac{d(x,y)}{d(y,x)}.$

A contractibility function $\rho:[0,r)\rightarrow [0,+\infty)$
is a function satisfying: (a) $\rho(0)=0$, (b) $\rho(\epsilon)\geq
\epsilon$, (c) $ \rho(\epsilon)\rightarrow 0$, as
$\epsilon\rightarrow 0$, (d) $ \rho$  is nondecreasing.
A general metric space $X$ is $\LGC(\rho)$ for some contractibility
function $\rho$, if for every $\epsilon\in [0,r)$ and $x\in X$, the
forward ball $B^+_x(\epsilon)$ is contractible inside
$B^+_x(\rho(\epsilon))$.
\end{definition}

\begin{lemma}[\cite{Z}]\label{homotopfin} Fix a function $N:(0,\alpha)\rightarrow
(0,\infty)$ with
\[\underset{\epsilon\rightarrow
0^+}{\lim\sup}\epsilon^nN(\epsilon)<\infty
\] and a contractibility
function $\rho:[0,r)\rightarrow [0,\infty)$. The class
\[
\mathscr{C}(N,\rho):=\{X\in \mathcal {M}^\delta: X \text{
is } \LGC(\rho), \ \Cov(X,\epsilon)\leq N(\epsilon)\text { for all
}\epsilon\in (0,\alpha)\}
\]
contains only finitely many homotopy types. Here, $\mathcal {M}^\delta$ denotes the collection of compact general metric spaces
with reversibility $\leq\delta$ and $\Cov(X; \epsilon)$ denotes the minimum number
of forward $\epsilon$-balls it takes to cover $X$.
\end{lemma}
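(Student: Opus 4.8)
The plan is to run the Grove--Petersen scheme for bounding homotopy types by finite combinatorial data, in the asymmetric framework of \cite{SZ1,Z}. This lemma is purely a statement about general metric spaces, so the Santal\'o machinery plays no role; instead three ingredients cooperate: the covering bound $\Cov(X,\epsilon)\le N(\epsilon)$ yields a uniform bound on the size of $\epsilon$-nets, the growth condition $\limsup_{\epsilon\to 0^+}\epsilon^n N(\epsilon)<\infty$ forces the (Lebesgue) covering dimension of every $X\in\mathscr{C}(N,\rho)$ to be at most $n$, and the $\LGC(\rho)$ condition provides the fillings used to build maps. The reversibility bound $\delta$ serves throughout to compare forward and backward $\epsilon$-balls: if $\lambda_X\le\delta$ then $d(x,y)\le\delta\,d(y,x)$ for all $x,y$, so a forward $s$-ball sits inside a backward $\delta s$-ball and conversely. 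To begin I would fix once and for all a scale $\epsilon_0>0$, depending only on $n$, $\rho$, $\alpha$ and $r$, so small that all iterates $\rho^{\circ k}(\epsilon_0)$ with $k\le n+2$ are defined and lie in $(0,\min\{\alpha,r\})$; this is possible since $\rho$ is nondecreasing with $\rho(0)=0$. With $\epsilon_0$ pinned down, set $M:=N\bigl(\epsilon_0/(1+\delta)\bigr)$, a finite constant depending only on $N,\delta,\rho,\alpha,r$.

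The first step is a uniform bound on nets. For $X\in\mathscr{C}(N,\rho)$ I would pick, by compactness, a maximal subset $P_X=\{p_1,\dots,p_m\}$ with $d(p_i,p_j)\ge\epsilon_0$ whenever $i\ne j$; maximality forces $\{B^+_{p_i}(\epsilon_0)\}_i$ to cover $X$. If $p_i$ and $p_j$ both lay in one forward ball $B^+_q(s)$, then $d(p_i,p_j)\le d(p_i,q)+d(q,p_j)\le\delta\,d(q,p_i)+d(q,p_j)<(1+\delta)s$, so taking $s=\epsilon_0/(1+\delta)$ every forward $s$-ball meets $P_X$ in at most one point; hence $m\le\Cov(X,s)\le N(s)=M$. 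Thus $|P_X|\le M$ uniformly over the class. Next, to each $X$ I attach its multiscale nerve datum: the integer $m=|P_X|$, together with, for every pair $i<j$ and every $k\in\{0,1,\dots,n+2\}$, the truth value of $d(p_i,p_j)<\rho^{\circ k}(\epsilon_0)$ --- equivalently, the combinatorics of the nested covers $\{B^+_{p_i}(\rho^{\circ k}(\epsilon_0))\}_i$. Since $m$ ranges in $\{1,\dots,M\}$ and $k$ in a fixed finite set, this datum assumes, up to relabelling the $p_i$, only finitely many values as $X$ runs over $\mathscr{C}(N,\rho)$. This partitions $\mathscr{C}(N,\rho)$ into finitely many classes, and it remains to show that two spaces in one class are homotopy equivalent.

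For the core step, let $X,Y$ share the same datum. The bound $\dim X\le n$ (coming from the growth condition on $N$) allows one to pass from the net-cover of $X$ to an at most $n$-dimensional simplicial model $K$: one refines $\{B^+_{p_i}(\epsilon_0)\}_i$ to a cover of multiplicity $\le n+1$ whose nerve $K$ has its combinatorics dictated by the multiscale datum, so that $K$ is literally the same complex for $X$ and for $Y$. A partition of unity subordinate to the refined cover of $X$ gives a map $X\to|K|$. Conversely I would build $|K|\to Y$ by induction on skeleta: send the vertex labelled $i$ to a point near $p_i^Y$, and extend over each $j$-simplex by coning its already-constructed boundary inside a forward ball of radius $\rho^{\circ j}(\epsilon_0)$ about one of its vertices; this is legitimate because the vertices of that simplex are pairwise $\rho^{\circ(j-1)}(\epsilon_0)$-close --- exactly what the datum records --- so $\LGC(\rho)$ contracts the configuration inside the required forward ball, the reversibility bound being used to convert the symmetric ``small diameter'' estimate produced by the construction into a genuine forward-ball estimate at the next scale, with the factor $\delta$ absorbed in the choice of $\epsilon_0$. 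Composing, I would check that $X\to|K|\to Y$ and $Y\to|K|\to X$ are homotopy inverse to one another: the needed homotopies arise from the same coned-filling procedure applied to the tracks joining a point to its image, again confined to controlled forward balls, the process terminating after at most $n+1$ steps since $\dim K\le n$. Hence every class lies in a single homotopy type, and $\mathscr{C}(N,\rho)$ has only finitely many homotopy types.

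I expect the main obstacle to be precisely this last step: making the coning constructions well defined --- that is, checking that at each stage the configuration being coned really sits inside the forward ball whose contractibility $\LGC(\rho)$ supplies --- and verifying that the two composites are homotopic to the respective identities, all while tracking the asymmetry through $\delta$ as one passes from scale $\rho^{\circ k}(\epsilon_0)$ to $\rho^{\circ(k+1)}(\epsilon_0)$. Setting up the refined model $K$ with combinatorics genuinely determined by the datum (rather than by $X$ alone) is the other delicate point. This bookkeeping is exactly what is carried out in \cite{Z} (following \cite{SZ1}, and, in the Riemannian case, the arguments of Grove--Petersen and Yamaguchi); by contrast, the net bound and the finiteness of combinatorial types are elementary once the reversibility comparison is in hand.
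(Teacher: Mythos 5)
The paper gives no proof of this lemma at all: it is quoted from \cite{Z} (which builds on \cite{SZ1} and, ultimately, on Petersen's and Grove--Petersen's controlled-homotopy finiteness theorems), so there is no in-paper argument to compare yours against. Your outline does follow the route that this literature takes: uniform nets from the covering bound, covering dimension $\le n$ from $\limsup_{\epsilon\to0^+}\epsilon^nN(\epsilon)<\infty$, skeletal coning from $\LGC(\rho)$, and the reversibility bound $\delta$ used to pass between forward and backward balls. Two small asymmetry wrinkles in your elementary steps are harmless but worth noting: a maximal two-sidedly $\epsilon_0$-separated set only guarantees covering by forward balls of radius $\delta\epsilon_0$ (not $\epsilon_0$), and all such constants must be tracked, exactly as you do in the net-counting estimate.

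The genuine gap is the step where you refine $\{B^+_{p_i}(\epsilon_0)\}$ to a multiplicity-$(n+1)$ cover ``whose nerve $K$ has its combinatorics dictated by the multiscale datum, so that $K$ is literally the same complex for $X$ and for $Y$.'' The dimension bound only gives the \emph{existence} of some refinement of multiplicity $\le n+1$; the refinement is highly non-canonical, and the isomorphism type of its nerve is not a function of the finitely many truth values $d(p_i,p_j)<\rho^{\circ k}(\epsilon_0)$ --- two spaces with identical data can yield non-isomorphic nerves, so your partition into finitely many classes does not, as written, produce a common simplicial model. The standard repair (Petersen's argument, which is what \cite{Z} adapts to the irreversible setting) avoids a common complex altogether: build a dominating complex $K_X$ of dimension $\le n$ from a fine multiplicity-$(n+1)$ cover of $X$ \emph{alone}, map $|K_X|\to Y$ by sending vertices through the net correspondence and extending over skeleta by $\LGC$-coning (the datum, or Gromov--Hausdorff closeness, is used only to certify that each simplex lands in a controlled forward ball), do the same with the roles of $X$ and $Y$ exchanged, and homotope the composites to the identities by the same coning; finiteness of homotopy types then follows because the covering bound makes the class totally bounded in the (nonreversible) Gromov--Hausdorff sense, so a ``sufficiently close implies homotopy equivalent'' statement suffices. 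Your sketch would need to be reorganized along these lines, or else the claim that the datum determines $K$ must be substantiated, which as stated it cannot be.
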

Corollary \ref{Balles} together with Lemma \ref{homotopfin} yields the following
\begin{theorem}\label{Ya}
For any $n$ and positive numbers $i, V, \delta$, the class of closed
Finsler $n$-manifolds $(M,F)$ with injectivity radius
$\mathfrak{i}_M\geq i$, $\Lambda_F\leq \delta$ and $\mu(M) \leq V$,
contains at most finitely many homotopy types. Here, $\mu(M)$ is either the Busemann-Hausdorff volume or the Holmes-Thompson volume of $M$.
\end{theorem}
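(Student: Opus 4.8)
The plan is to deduce Theorem \ref{Ya} from Corollary \ref{Balles} and Lemma \ref{homotopfin} by verifying that the hypotheses of the latter are met uniformly over the stated class of Finsler manifolds. First I would equip each closed Finsler $n$-manifold $(M,F)$ in the class with its induced forward metric $d$, so that $(M,d)$ becomes a compact general metric space whose reversibility $\lambda_M$ satisfies $\lambda_M = \lambda_F \le \sqrt{\Lambda_F} \le \sqrt{\delta}$ (using $\Lambda_F \ge \lambda_F^2$ from Section 2). Thus every such $(M,d)$ lies in $\mathcal{M}^{\delta'}$ for $\delta' := \sqrt{\delta}$, so the class is a subclass of the ambient collection on which Lemma \ref{homotopfin} operates. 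It remains to produce a single contractibility function $\rho$ and a single covering-bound function $N$, depending only on $n, i, V, \delta$, that work for all members simultaneously.

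Next I would produce the contractibility function. Since $\mathfrak{i}_M \ge i$, any forward ball $B_x^+(\epsilon)$ with $\epsilon$ small (say $\epsilon < i/(1+\sqrt{\delta})$, so that $B_x^+(\epsilon)$ together with minimizing geodesics from $x$ stays inside the injectivity domain) is geodesically star-shaped with respect to $x$: the forward geodesic from $x$ to any point of the ball is unique and minimizing, and the family of such geodesics, reparametrized, gives a deformation retraction of $B_x^+(\epsilon)$ onto $x$ within $B_x^+(\epsilon)$ itself. A point on the minimizing geodesic from $x$ to $y$ at parameter fraction $s$ lies within forward distance $\epsilon$ of $x$, but one should be careful that the homotopy stays in a controlled ball; the clean choice is $\rho(\epsilon) := (1+\sqrt{\delta})\,\epsilon$ for $\epsilon \in [0, i/(1+\sqrt{\delta}))$, which is nondecreasing, vanishes at $0$, tends to $0$, and dominates $\epsilon$. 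Hence every $M$ in the class is $\LGC(\rho)$ for this fixed $\rho$.

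Then I would produce the covering bound $N$. Fix $\epsilon \in (0,\alpha)$ with $\alpha := i/(1+\sqrt{\delta})$. Take a maximal $\epsilon$-separated set $\{x_1,\dots,x_m\}$ in $M$ (forward distance $\ge \epsilon$ pairwise); then the forward balls $B_{x_j}^+(\epsilon)$ cover $M$, so $\Cov(M;\epsilon) \le m$. The balls $B_{x_j}^+(\epsilon/2)$ are pairwise disjoint (here one uses the triangle inequality and $\lambda_F \le \sqrt\delta$ to control the asymmetry; shrinking to radius $\epsilon/(2(1+\sqrt\delta))$ makes genuine disjointness transparent), and by Corollary \ref{Balles} each satisfies $\mu(B_{x_j}^+(r)) \ge \frac{C(n,\Lambda_F)}{n^n} r^n \ge \frac{C(n,\delta)}{n^n} r^n$ with $r = \epsilon/(2(1+\sqrt\delta))$, using that $C(n,\Lambda_F)$ is monotone in $\Lambda_F$ (or bounding it below by $C(n,\delta)$ directly from its definition). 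Summing over $j$ and using $\sum_j \mu(B_{x_j}^+(r)) \le \mu(M) \le V$ gives $m \le V n^n / (C(n,\delta) r^n)$, i.e. $\Cov(M;\epsilon) \le N(\epsilon) := c(n,\delta,V)\,\epsilon^{-n}$ for an explicit constant; this $N$ clearly satisfies $\limsup_{\epsilon\to 0^+}\epsilon^n N(\epsilon) < \infty$. With $\rho$ and $N$ both fixed and depending only on $n,i,V,\delta$, Lemma \ref{homotopfin} applies and yields that the whole class $\mathscr{C}(N,\rho)$ — hence its subclass of closed Finsler $n$-manifolds with the stated bounds — contains only finitely many homotopy types.

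The main obstacle is the bookkeeping forced by non-reversibility: one must be attentive that "forward ball", "separated set", and "disjointness" behave correctly when $d$ is asymmetric, which is exactly why the constants $i/(1+\sqrt\delta)$ and the factors of $(1+\sqrt\delta)$ appear, and why $\lambda_F \le \sqrt{\Lambda_F} \le \sqrt\delta$ is invoked repeatedly to keep everything uniform. The geometric inputs themselves — local geodesic contractibility from the injectivity radius bound, and the packing estimate from Corollary \ref{Balles} — are exactly the two facts one needs, and the rest is assembling them into the hypotheses of Lemma \ref{homotopfin}.
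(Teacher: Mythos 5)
Your proposal is correct and follows essentially the same route as the paper: both deduce the theorem from Corollary \ref{Balles} (volume lower bound on small forward balls) plus Lemma \ref{homotopfin}, with the injectivity radius bound giving the contractibility function and a packing/covering estimate giving $N(\epsilon)=C\epsilon^{-n}$. The only differences are cosmetic: you spell out the separated-set packing argument that the paper delegates to a cited proposition of Shen--Zhao, and you use the (valid but unnecessarily generous) contractibility function $\rho(\epsilon)=(1+\sqrt{\delta})\epsilon$ where the paper takes the identity.
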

\begin{proof}Let $c_M$ denote the contractibility radius of $(M,F)$ (cf. \cite{Z}).
Since $c_M\geq \mathfrak{i}_M\geq i$, $(M,F)$ is $\LGC(\rho)$, where
$\rho$ is the identity map of $[0,i)$. Corollary \ref{Balles} implies that
$\mu(B^+_p(\epsilon))\geq C(n,\delta) \epsilon^n$ for all $p\in M$ and
$\epsilon< i/(1+\sqrt{\delta})$. It follows from \cite[Proposition 3.11]{SZ1} that
\[
\Cov(M,\epsilon)\leq
\frac{\mu(M)}{C(n,\delta)(\epsilon/(2\sqrt{\delta}))^n}=C'(n,\delta,V)\epsilon^{-n}.
\]
Define the covering function
$N(\epsilon):=C'(n,\delta,V)\epsilon^{-n}$, $\epsilon\in (0,i/(1+\sqrt{\delta}))$.
The conclusion now follows from Lemma \ref{homotopfin}.
\end{proof}
One can easily see that Theorem \ref{Ya} implies Yamaguchi's finiteness theorem \cite{Y} and \cite[Theorem 1.3]{Z}.

\section{Appendix}

\begin{proposition}
Let $(M,F,d\mu)$ be a closed Finsler manifold. Then
\[
\mathbb{I}(M,d\mu)\leq \mathcal {S}(M,d\mu)\leq 2 \mathbb{I}(M,d\mu).
\]
\end{proposition}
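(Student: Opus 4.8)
The plan is to run the classical Federer--Fleming argument, adapted to the (possibly non-reversible) Finsler setting, proving the two inequalities separately; throughout set $q:=n/(n-1)$ and abbreviate $\mathbb{I}:=\mathbb{I}(M,d\mu)$, $\mathcal{S}:=\mathcal{S}(M,d\mu)$.

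For $\mathbb{I}\le\mathcal{S}$, I would fix $g\in C^\infty(M)$ and, since adding a constant to $g$ affects neither side of the definition of $\mathcal{S}$, assume $0$ is a median of $g$, so $\mu(\{g>0\})\le\mu(M)/2$ and $\mu(\{g<0\})\le\mu(M)/2$. Applying Shen's co-area formula \cite[Theorem 3.3.1]{Sh1} gives $\int_M F^*(dg)\,d\mu=\int_{\mathbb{R}}(\int_{g^{-1}(t)}dA_t)\,dt$, where $dA_t$ is the measure on the level set $\Gamma_t=g^{-1}(t)$ induced by $d\mu$ together with the unit normal in the $\nabla g$-direction. For a regular value $t>0$ this normal points into $\{g>t\}$, a set of measure $\le\mu(M)/2$, so $\int_{\Gamma_t}dA_t\ge\min\{\A_\pm(\Gamma_t)\}\ge\mathbb{I}^{1/n}\mu(\{g>t\})^{1/q'}$ by definition of $\mathbb{I}$ (here $1/q'=(n-1)/n$), and symmetrically for $t<0$ with $\{g<t\}$; hence
\[
\int_M F^*(dg)\,d\mu\ge\mathbb{I}^{1/n}\left[\int_0^\infty\mu(\{g>t\})^{1/q}\,dt+\int_0^\infty\mu(\{g<-t\})^{1/q}\,dt\right].
\]
The only remaining input is the elementary one-dimensional inequality $(\int_0^\infty\phi^{1/q})^q\ge q\int_0^\infty t^{q-1}\phi\,dt$ for nonincreasing $\phi\ge0$, which follows by noting $A(s):=\int_0^s\phi^{1/q}\ge s\,\phi(s)^{1/q}$ and integrating $\tfrac{d}{ds}A^q=q A^{q-1}\phi^{1/q}\ge q s^{q-1}\phi$; applied to $\phi(t)=\mu(\{g>t\})$ and $\phi(t)=\mu(\{g<-t\})$, whose $q$-moments are $\tfrac1q\int_M g_\pm^q\,d\mu$, and combined with $x^{1/q}+y^{1/q}\ge(x+y)^{1/q}$ and $g_+^q+g_-^q=|g|^q$, it yields $\int_M F^*(dg)\,d\mu\ge\mathbb{I}^{1/n}(\int_M|g|^q\,d\mu)^{1/q}$. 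Since $\int_M|g|^q\,d\mu\ge\inf_\alpha\int_M|g-\alpha|^q\,d\mu$ and $n/q=n-1$, raising to the $n$-th power and taking the infimum over $g$ gives $\mathcal{S}\ge\mathbb{I}$. Note that no appeal to $\widetilde{F}$ is needed: the co-area formula automatically supplies $\A_+$ for upper level sets and $\A_-$ for lower ones.

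For $\mathcal{S}\le2\mathbb{I}$, I would test $\mathcal{S}$ against smooth approximations of the characteristic function of one piece. Given a compact hypersurface $\Gamma$ splitting $M$ into $M_1,M_2$ with $\mu(M_1)\le\mu(M_2)$, pick a collar of $\Gamma$ and let $f_\epsilon\in C^\infty(M)$, $0\le f_\epsilon\le1$, interpolate from $1$ on $M_1$ to $0$ on $M_2$ across an $\epsilon$-collar via a fixed smooth profile; then $f_\epsilon\to\chi_{M_1}$ in $L^q$, and the co-area formula shows $\int_M F^*(df_\epsilon)\,d\mu\to\A_+(\Gamma)$ (the area induced by the normal into $M_1$), the limit being adjustable to $\min\{\A_\pm(\Gamma)\}$ by choosing the profile to increase toward $M_1$ or toward $M_2$. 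For the denominator, $\int_M|f_\epsilon-\alpha|^q\,d\mu\to|1-\alpha|^q\mu(M_1)+|\alpha|^q\mu(M_2)=:\hat g(\alpha)$, and $\hat g$ is convex with $\hat g'(0)<0<\hat g'(1)$; at its interior minimiser $\alpha_*$ one has $\alpha_*^{q-1}\mu(M_2)=(1-\alpha_*)^{q-1}\mu(M_1)$, whence $\hat g(\alpha_*)=(1-\alpha_*)^{q-1}\mu(M_1)\ge2^{1-q}\mu(M_1)$ because $\alpha_*\le1/2$. As $(1-q)(n-1)=-1$, this gives $(\inf_\alpha\int_M|f_\epsilon-\alpha|^q\,d\mu)^{n-1}\to(\inf_\alpha\hat g)^{n-1}\ge\tfrac12\mu(M_1)^{n-1}$, so
\[
\mathcal{S}\le\lim_{\epsilon\to0}\frac{(\int_M F^*(df_\epsilon)\,d\mu)^n}{(\inf_\alpha\int_M|f_\epsilon-\alpha|^q\,d\mu)^{n-1}}\le\frac{2\,\min\{\A_\pm(\Gamma)\}^n}{\mu(M_1)^{n-1}},
\]
and the infimum over $\Gamma$ yields $\mathcal{S}\le2\mathbb{I}$; the constant $2$ is genuinely forced by the $L^q$-optimisation, the extremal case being $\mu(M_1)=\mu(M_2)$.

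The part I expect to require the most care is not the functional-analytic bookkeeping but the two invocations of the Finsler co-area formula where orientation matters: first, certifying in Step 1 that the level-set measure in $\int_M F^*(dg)\,d\mu$ is bounded below by $\min\{\A_\pm(\Gamma_t)\}$ rather than by the potentially smaller area in the wrong orientation; and second, verifying in Step 2 that the smoothing $f_\epsilon$ of $\chi_{M_1}$ has $\int_M F^*(df_\epsilon)\,d\mu$ converging precisely to the area of $\Gamma$ induced by the intended normal (not to an average, nor to the reverse-Finsler area). Once these points are settled, the rest is the one-variable calculus lemmas above.
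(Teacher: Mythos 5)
Your proposal is correct and follows essentially the same Federer--Fleming argument as the paper: the median trick, Shen's co-area formula, the layer-cake inequality for nonincreasing $\phi$, and the test functions interpolating $\chi_{M_1}$ in both orientations to capture $\min\{\A_\pm(\Gamma)\}$, with the factor $2$ coming from the same minimization of $|1-\alpha|^{n/(n-1)}\mu(M_1)+|\alpha|^{n/(n-1)}\mu(M_2)$. The only differences are cosmetic: you integrate over all levels $t\in\mathbb{R}$ of $g$ at once where the paper splits $f-\alpha_0$ into $h_1,h_2$ and introduces localized constants $\mathbb{I}(M_i)$, and you use smooth profiles where the paper uses Lipschitz distance cutoffs.
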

\begin{proof}
Fix $\Gamma$ with $\mu(M_1)\leq \mu(M_2)$. Define a Lipschitz function $f^+_\epsilon$ by
\[
f^+_\epsilon(x):= \left \{
\begin{array}{lll}
&1, &x\in M_1,\,d(\Gamma,x)\geq \epsilon,\\
&\frac{1}{\epsilon}d(\Gamma,x), &x\in M_1,\,d(\Gamma,x)< \epsilon,\\
&0, &x\in M_2.
\end{array}
\right.
\]
By letting $\epsilon\rightarrow 0^+$, we obtain that
\begin{align*}
\inf_{\alpha\in \mathbb{R}}\left(\int_M|f_\epsilon^+-\alpha|^{\frac{n}{n-1}}d\mu \right)^{n-1}&\geq \inf_{\alpha\in \mathbb{R}}\left\{|1-\alpha|^{\frac{n}{n-1}}\mu(M_1)+|\alpha|^{\frac{n}{n-1}}\mu(M_2)\right\}^{n-1}\\
&\geq \mu(M_1)^{n-1}\inf_{\alpha\in \mathbb{R}}\left\{|1-\alpha|^{\frac{n}{n-1}}+ |\alpha|^{\frac{n}{n-1}}\right\}^{n-1}\\
&\geq \mu(M_1)^{n-1}/2.
\end{align*}
Set $\rho_+(x)=d(\Gamma,x)$, $x\in \overline{M_1}$. Lemma \ref{in-in} yields that $\nabla \rho_+|_\Gamma=\mathbf{n}_+$, where $\mathbf{n}_+$ denotes the unit inward normal vector field along $\partial M_1=\Gamma$. By the co-area formula (cf. \cite[Theorem 3.3.1]{Sh1}), we see that
\[
\int_MF^*(df_\epsilon^+)d\mu=\frac{1}{\epsilon}\int^\epsilon_0dt \int_{\rho_+^{-1}(t)}d\A_+\rightarrow \A_+(\Gamma).
\]
Hence, $2\A_+(\Gamma)^n\geq \mathcal {S}(M,d\mu)\cdot\mu(M_1)^{n-1}$. Similarly, define a Lipschitz function $f^-_\epsilon$ by
\[
f^-_\epsilon(x):= \left \{
\begin{array}{lll}
&0, &x\in M_2,\, d(\Gamma, x)>\epsilon\\
&\frac{1}{\epsilon}d(\Gamma,x)-1, &x\in M_2,\,d(\Gamma,x)\leq \epsilon,\\
&-1, &x\in M_1.
\end{array}
\right.
\]
Then one can show $2\A_-(\Gamma)^n\geq \mathcal {S}(M,d\mu)\cdot\mu(M_1)^{n-1}$.
Therefore, $ \mathcal {S}(M,d\mu)\leq 2\mathbb{I}(M,d\mu)$.

Given $f\in C^\infty$, let $\alpha_0$ be a median of $f$, i.e.,
\[
\mu(\{x: f(x)\geq \alpha_0\})\geq \frac12\mu(M), \ \mu(\{x: f(x)\leq \alpha_0\})\geq \frac12\mu(M).
\] Set $
M_1:=\{x:\, f(x)<\alpha_0 \}$ and $M_2:=\{x:\, f(x)>\alpha_0 \}$.
Then $\mu(M_i)\leq \mu(M)/2$, for $i=1,2$. Let $h:=f-\alpha_0$ and $h_i:=h|_{M_i}\in C^\infty_c(M_i)$, $i=1,2$.

Let $M_t:=\{x:\,h_2(x)>t\}$. Since $\mu(M_t)$ is decreasing, we have
\begin{align*}
\frac{d}{ds}\left(\int^s_0\mu(M_t)^{\frac{n-1}{n}}dt\right)^{\frac{n}{n-1}}&=\frac{n}{n-1}\left( \int^s_0\mu(M_t)^{\frac{n-1}{n}}dt \right)^{\frac1{n-1}}\mu(M_s)^{\frac{n-1}{n}}\\
&\geq \frac{n}{n-1}s^\frac1{n-1}\mu(M_s),
\end{align*}
which implies
\[
\left( \int^s_0\mu(M_t)^{\frac{n-1}{n}}dt\right)^{\frac{n}{n-1}}\geq\int^s_0\mu(M_t)dt^{\frac{n}{n-1}}.
\]
Note that $\nabla h_2$ is the inward normal vector field along $\partial M_t$. Thus,
\begin{align*}
&\int_{M_2} F^*(dh_2)d\mu=\int^\infty_0 \A_+(\partial M_t) dt\geq \mathbb{I}(M_2)^\frac1n\int^\infty_0\mu(M_t)^\frac{n-1}{n}dt\\
\geq& \mathbb{I}(M_2)^\frac1n\left(\int^\infty_0\mu(M_t)dt^{\frac{n}{n-1}} \right)^{\frac{n-1}{n}}=\mathbb{I}(M_2)^\frac1n\left(-\int^\infty_0t^{\frac{n}{n-1}}d\mu(M_t) \right)^{\frac{n-1}{n}}\\
=&\mathbb{I}(M_2)^\frac1n \left(\int^\infty_0 t^{\frac{n}{n-1}}dt\int_{\partial M_t}\frac{d\A_{\nabla h_2}}{F^*(dh_2)}  \right)^{\frac{n-1}{n}}=\mathbb{I}(M_2)^\frac1n\left(\int_M h_2^{\frac{n}{n-1}} d\mu\right)^{\frac{n-1}{n}}.
\end{align*}
Here, $\mathbb{I}(M_i)$ is defined by
\[
\inf_\Omega \frac{\min\{\A_\pm(\partial\Omega)\}^n}{\mu(\Omega)^{n-1}},
\]
where $\Omega$ range over all open submanifolds of $M_i$ with compact closures in $M_i$ and smooth boundary. Clearly, $\mathbb{I}(M_i)\neq 0$.

Likewise, one can show that $\int_{M_1} F^*(dh_1)d\mu\geq \mathbb{I}(M_1)^\frac1n\|h_1\|_{n/(n-1)}$. Since $\mu(M_i)\leq \mu(M)/2$,
$\mathbb{I}(M_i)\geq \mathbb{I}(M,d\mu)$.
Let $\chi_i$ be the characteristic function of $M_i$, $i=1,2$. Thus,
\begin{align*}
\int_M F^*(df)d\mu&=\int_M F^*(dh)d\mu=\sum_j \int_{M_j} F^*(dh_j)d\mu\\
&\geq \mathbb{I}(M,d\mu)^\frac1n\sum_j\left\{\int_M\chi_j|f-\alpha_0|^{\frac{n}{n-1}} \right\}^{\frac{n-1}{n}}\\
&\geq \mathbb{I}(M,d\mu)^\frac1n\|f-\alpha_0\|_{\frac{n}{n-1}}\geq \mathbb{I}(M,d\mu)^\frac1n \inf_{\alpha\in \mathbb{R}}\|f-\alpha\|_{\frac{n}{n-1}},
\end{align*}
which implies that $\mathcal {S}(M,d\mu)\geq \mathbb{I}(M,d\mu)$.\end{proof}


\begin{thebibliography}{10}

\bibitem{AlB} J. Alvarez-Paiva and G. Berck, \textsl{What is wrong with the
Hausdorff measure in Finsler spaces}, Adv. in Math.,
{\bf{204}}(2006), 647-663.

\bibitem{AlT}J. Alvarez-Paiva and A.C. Thompson, \textsl{ Volumes in normed and Finsler
spaces}, A Sampler of Riemann-Finsler geometry (Cambridge) (D. Bao,
R. Bryant, S.S. Chern, and Z. Shen, eds.), Cambridge University
Press, 2004, pp. 1-49.




\bibitem{BCS} D. Bao, S. S. Chern and Z. Shen, \textsl{An introduction
to Riemannian-Finsler geometry}, GTM {\bf{200}}, Springer-Verlag,
2000.




\bibitem{BBI} D. Burago, Y. Burago and S. Ivanov, \textsl{A course in metric geometry}, A.M.S, 2001.


\bibitem{C2} I. Chavel, \textsl{Eigenvalues in Riemannian geometry}, Academic Press, New York, 1984.






\bibitem{C3} B. Chen, \textsl{Some geometric and analysis problems in Finsler geometry}, Doctoral thesis, Zhejiang University, 2010.

\bibitem{Cr2} C. Croke, \textsl{A sharp four dimensional isoperimetric inequality}, Comment. Math. Helv. \textbf{59}(1984),
187-192.


\bibitem{Cr3} C. Croke, \textsl{Curvature Free Volume Estimates}, Inventiones Mathematicae,
\textbf{76}(1984), 515-521.

\bibitem{Cr5} C. Croke, N. Dairbekov, \textsl{Lengths and volumes in Riemannian manifolds},
Duke Math. J., \textbf{125 }(2004), 1-14.

\bibitem{Cr} C. Croke,  \textsl{Some  isoperimetric  inequalities  and  eigenvalue  estimates},  Ann.  Sci.  Ec.
Norm.  Super,  Ser.  \textbf{13}(1980), 419-435.


\bibitem{Cr4} C. Croke, and M. Katz,\textsl{Universal volume bounds in Riemannian manifolds}, Surveys in Differential Geometry VIII, Lectures on Geometry and
Topology held in honor of Calabi, Lawson, Siu, and Uhlenbeck at Harvard University, May 3-5, 2002, edited by S.T. Yau (Somerville, MA:
International Press, 2003.) pp. 109-137.


\bibitem{E} D. Egloff, \textsl{Uniform Finsler Hadamard manifolds}, Ann. Inst. Henri Poincar\'e, \textbf{66}(1997), 323-357.

\bibitem{L} M. Ledoux, \textsl{A simple analytic proof of an inequality by P. Buser}, Proc. Amer.
Math. Soc. \textbf{121}(1994), 951-959.




\bibitem{GS} Y. Ge and Z. Shen, \textsl{Eigenvalues and eigenfunctions of metric measure manifolds}, Proc.
London Math. Soc., \textbf{82}(2001), 725-746.

\bibitem{R} H. Rademacher, \textsl{Nonreversible Finsler metrics of positive
ag curvature},  A sampler
of Riemann-Finsler geometry, Cambridge Univ. Press, Cambridge, 2004, pp. 261-302.



\bibitem{Sa} L. Santal\'o, \textsl{Integral Geometry and Geometric Probability}, Encyclopedia Math. Appl.,
Add- ison-Wesley, Reading, MA, 1976.

\bibitem{Sa2} L. Santal\'o, \textsl{Measure of sets of geodesics in a Riemannian space and applications to integral formulas in elliptic and hyperbolic spaces}, Summa Brasil. Math., \textbf{3}(1952), 1-11.






\bibitem{Sh1} Z. Shen, \textsl{Lectures on Finsler geometry}, World
Sci., Singapore, 2001.


\bibitem{Sh3} Z. Shen, \textsl{The non-linear Laplacian for Finsler manifolds},  The theory of Finslerian
Laplacians and applications, vol. 459 of Math. Appl., Kluwer Acad. Publ., Dordrecht,
1998, pp. 187-198.




\bibitem{SZ1} Y. Shen and W. Zhao, \textsl{Gromov pre-compactness theorems for nonreversible Finsler manifolds}, Diff. Geom. Appl., \textbf{28}(2010),
565-581.






\bibitem{W} B. Wu, \textsl{Volume form and its applications in Finsler geometry}, Publ. Math. Debrecen, \textbf{78}(2011),
723-741.

\bibitem{Y} T. Yamaguchi, \textsl{Homotopy type finiteness theorems for certain precompact families of
Riemannian manifolds}, Proc. Am. Math. Soc. \textbf{102}(1988), 660-666.

\bibitem{ZY} W. Zhao and Y. Shen, \textsl{A Universal Volume Comparison Theorem for Finsler Manifolds and Related Results}, Can. J. Math., \textbf{65}(2013), 1401-1435.

\bibitem{Z} W. Zhao, \textsl{Homotopy finiteness theorems for Finsler manifolds}, Publ. Math. Debrecen, \textbf{83}(2013), 329-358.


\end{thebibliography}
\end{document}